\setlist[enumerate]{label=\rm{(\arabic*)}}
\setlist[enumerate,2]{label=\rm({\it\roman*})}
\setlist[itemize]{label=\raisebox{0.25ex}{\tiny$\bullet$}}
\theoremstyle{plain}
\newtheorem{theorem}{Theorem}
\newtheorem{lemma}{Lemma}[section]
\newtheorem{corollary}[lemma]{Corollary}
\newtheorem{proposition}[lemma]{Proposition}
\newtheorem{question}[lemma]{Question}
\theoremstyle{definition}
\newtheorem{definition}[lemma]{Definition}
\theoremstyle{remark}
\newtheorem{remark}[lemma]{Remark}
\newtheorem{observation}[lemma]{Observation}
\newtheorem{example}[lemma]{Example}
\newcommand{\PPP}{\mathbb{P}}
\newcommand{\RRR}{\mathbb{R}}
\newcommand{\NNN}{\mathbb{N}}
\newcommand{\AAA}{\mathbb{A}}
\newcommand{\CCC}{\mathbb{C}}
\newcommand{\FFF}{\mathbb{F}}
\newcommand{\divv}{\mathrm{div}}
\newcommand{\sd}{~|~}
\DeclarePairedDelimiter{\ceil}{\lceil}{\rceil}
\DeclarePairedDelimiter{\floor}{\lfloor}{\rfloor}
\DeclareMathOperator{\Ima}{Im}
\DeclareMathOperator{\Aut}{Aut}
\DeclareMathOperator{\GL}{GL}
\begin{document}
\subjclass[2010]{Primary 14H20 14H50; Secondary 14J26 14E05}
\keywords{plane curves, simple singularities, blow-up, Hirzebruch surface}

\title{Plane curves of fixed bidegree and their $A_k$-singularities}
\author[Julia Schneider]{Julia Schneider}
\address{Julia Schneider, Universit\"{a}t Basel, Departement Mathematik und Informatik, Spiegelgasse $1$, CH-$4051$ Basel, Switzerland}
\email{julia.noemi.schneider@unibas.ch}
\maketitle

\begin{abstract}
  We provide a tool how one can view a polynomial on the affine plane of bidegree $(a,b)$ -- by which we mean that its Newton polygon lies in the triangle spanned by $(a,0)$, $(0,b)$ and the origin -- as a curve in a Hirzebruch surface having nice geometric properties. As an application, we study maximal $A_k$-singularities of curves of bidegree $(3,b)$ and find the answer for $b\leq 12$.
\end{abstract}

\tableofcontents

\section{Introduction}\label{section:Introduction}

We study algebraic curves (not necessarily reduced) on the affine plane $\mathbb{A}^2(\mathbb{C})$ that have a singularity of type $A_k$, which means that there is an analytical local isomorphism such that the curve is given by $y^2-x^{k+1}=0$ in a neighbourhood of the singular point (c.f. Definition~\ref{definition:SingularitiesOfTypeAk}). We ask:
\begin{question}\label{question:MaximalAkForDegreed}
  For $d\geq 1$, what is the maximal $k$ such that there exists a curve of degree $d$ that has an $A_k$-singularity?
\end{question}
We denote this by $N(d)$ and can give answers for small $d$:
\begin{center}
  \begin{tabular}{ >{$}r<{$} || >{$}c<{$} | >{$}c<{$} | >{$}c<{$}| >{$}c<{$}| >{$}c<{$}| >{$}c<{$}| >{$}c<{$} }
    d & 1 & 2 & 3 & 4 & 5 & 6 & 7...\\  \hline
    N(d) & 0 & 1 & 3 & 7 & 12 & 19 & ? \\
  \end{tabular},
\end{center}
where an explicit equation for $d=5$ can be found in~\cite{wall_1996}, and the result for $d=6$ is by Yang, who gave a classification of all simple singularities of sextic curves in \cite{yang_1996}. (Note that the answers of $N(2)$, $N(3)$ and $N(4)$ differ if we only consider irreducible curves.)
The difficulty of the question increases rapidly for larger values of $d$, so the asymptotic behaviour is studied and bounds for \[\alpha=\limsup\frac{2\,N(d)}{d^2}\] are wanted, where we multiplied by $2$ to obtain nicer numbers, as it is often done in the literature.
Gusein-Zade and Nekhoroshev \cite{gusein-zade_2000} found in 2000 that $1.5\geq\alpha\geq\frac{15}{14}\simeq 1.07142$ and in the same year, Cassou-Nogu\`es and Luengo \cite{cassou-luengo_2000} refined the lower bound to $8-4\sqrt3\simeq1.07179$.
A decade passed until Orevkov \cite{orevkov_2012} improved it even further to $\frac{7}{6}=1.1\overline 6$ in 2012.

Question~\ref{question:MaximalAkForDegreed} can also be approached through fixing a bidegree instead of the degree.
We say that a polynomial $F$ (or equivalently, the curve in $\mathbb{A}^2(\mathbb{C})$ defined by its zero set) has \textit{bidegree $(a,b)$} if its Newton polygon lies in the triangle spanned by $(a,0)$, $(0,0)$ and $(0,b)$.
In particular, a polynomial is of bidegree $(d,d)$ if and only if it is of degree at most $d$.
So we generalize Question~\ref{question:MaximalAkForDegreed}:
\begin{question}\label{question:MaximalAkForBidegreeab}
  For $(a,b)\in \mathbb{N}^2$, what is the maximal $k$ such that there is a curve in $\mathbb{A}^2(\mathbb{C})$ of bidegree $(a,b)$ with an $A_k$-singularity?
\end{question}
Similar to above, we denote this by $N(a,b)$.
For instance, one finds $N(1,b)=0$ for all $b$, and fixing $a=2$ yields $N(2,b)=b-1$ (c.f. Example~\ref{example:N1b} respectively Lemma~\ref{lemma:BoundsForThe2SectionCase}).
We have studied the case where $a=3$ and found the following values of $N(3,b)$:
\begin{theorem}\label{theorem:TheTheoremTable}
  For small $b$, $N(3,b)$ is given by the following table:
  \begin{center}
    \begin{tabular}{ >{$}r<{$} || >{$}c<{$} | >{$}c<{$} | >{$}c<{$}| >{$}c<{$}| >{$}c<{$}| >{$}c<{$}| >{$}c<{$} | >{$}c<{$} | >{$}c<{$} | >{$}c<{$} | >{$}c<{$} }
      b &  3 & 4 & 5 & 6 & 7 & 8 & 9 & 10 & 11 & 12\\ \hline
      N(3,b) & 3 & 5 & 7 & 8 & 10 & 12 & 13 & 15 & 17 & 18 \\
    \end{tabular}.
  \end{center}
  Moreover, for $b\geq 4$ there are irreducible polynomials that achieve the maximal singularities.
\end{theorem}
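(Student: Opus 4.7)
The plan is to establish matching upper and lower bounds for $N(3,b)$ at each $b \in \{3,\dots,12\}$, leveraging the Hirzebruch-surface translation promised in the abstract. Under that translation, a polynomial of bidegree $(3,b)$ corresponds to a curve $C$ on some Hirzebruch surface $\mathbb{F}_n$ whose intersection with a fiber of the ruling $\pi:\mathbb{F}_n\to\mathbb{P}^1$ is $3$; that is, $C$ is a \emph{trisection}. An $A_k$-singularity of the original polynomial transports to an $A_k$-singularity of $C$, whose $\delta$-invariant is $\lfloor(k+1)/2\rfloor$ and whose local branch structure (a smooth branch plus a cuspidal/tacnodal pair) is well adapted to the ruling.

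For the upper bounds, my idea is to resolve the $A_k$-singularity by a sequence of point blow-ups and, after each blow-up, contract any vertical $(-1)$-curve that appears so as to remain in the category of Hirzebruch surfaces, possibly via elementary transformations $\mathbb{F}_n \dashrightarrow \mathbb{F}_{n\pm 1}$. Since each blow-up of the singular point converts an $A_k$ into an $A_{k-2}$ (with a fixed local behaviour against the fiber) and strictly decreases the numerical invariants of the trisection, one obtains a controlled recursion that reduces any hypothetical $A_k$-trisection of bidegree $(3,b)$ to one of strictly smaller $b$. Combined with the base case $b=3$ and with Lemma~\ref{lemma:BoundsForThe2SectionCase} applied whenever a fiber component splits off (so that the residual curve is a $2$-section), this should pin $N(3,b)$ down exactly for each $b\leq 12$.

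The main technical obstacle is bookkeeping: one must simultaneously track the Hirzebruch index $n$, the class of the trisection on the current surface, and the residual singularity type after partial resolution, and check in each of the finitely many configurations that the sharp bound equals the tabulated $N(3,b)$, not merely a weaker asymptotic inequality. I expect this to require splitting the argument according to whether the blown-up points lie on a smooth fiber, a singular fiber, or on the negative section, and carrying out a separate induction in each case.

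For the lower bounds, I would exhibit explicit trisections of the form $y^3 + \alpha(x)y^2 + \beta(x)y + \gamma(x) = 0$ whose discriminant in $y$ vanishes to a prescribed high order at a single point, forcing two of the three branches of $C$ to collide there with maximal tangency and producing an $A_k$-singularity with the desired $k$. For each $b\leq 12$ a direct parameter count indicates that such a configuration is realizable, and a local computation confirms the singularity type. Irreducibility for $b\geq 4$ reduces to the cubic in $y$ being irreducible over $\mathbb{C}(x)$, which is a generic condition and can be arranged by a small deformation preserving the singularity; the construction should in fact be guided by running the upper-bound reduction in reverse, which predicts precisely which configurations of blow-ups yield extremal examples.
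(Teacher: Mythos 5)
Your overall framework (trisections on Hirzebruch surfaces, elementary transformations that trade an $A_k$ for an $A_{k-2}$) matches the paper's, but both halves of your argument have genuine gaps. For the lower bound, ``a direct parameter count indicates that such a configuration is realizable'' cannot work: a bidegree $(3,3m)$ polynomial has $6m+4$ coefficients while an $A_k$-singularity at an unspecified point has codimension about $k-2$, so a naive count would predict $A_k$ with $k$ far beyond the tabulated values (around $k=19$ for $b=9$, where the true answer is $13$). The extremal curves are therefore highly non-generic and their existence is precisely the hard part; the paper must produce completely explicit configurations --- a $(3,1)$-curve on $\PPP^1\times\PPP^1$ with binomial coefficients meeting a $(1,2)$-curve in a single point with multiplicity $7$, quartics and cubics in $\PPP^2$ with coefficients involving $i\sqrt3$, and Feller's family $y^3-(x^b-y)^2$ for even $b$ --- and verify the tangency orders by direct substitution. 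Nothing in your proposal produces these examples or explains why a small deformation preserving the singularity exists.

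For the upper bound, your ``controlled recursion that reduces any hypothetical $A_k$-trisection of bidegree $(3,b)$ to one of strictly smaller $b$'' is not what the chain of elementary transformations does, and I do not see how to make it an induction on $b$: mid-chain the trisection is not linearly equivalent to $3S_+$ on the intermediate Hirzebruch surface, so it does not correspond to a polynomial of smaller bidegree with a residual $A_{k'}$-singularity to which one could apply $N(3,b')$ inductively. What the paper actually does is run the chain all the way down to an (almost) smooth trisection on $\FFF_1$, contract the $(-1)$-section to land in $\PPP^2$, and then prove by explicit computation that the resulting plane configuration (e.g.\ a cuspidal cubic meeting an irreducible conic in exactly one smooth point, with prescribed collinearities) cannot exist --- and it carries this out only for $b\equiv 0\pmod 3$. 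For $b\not\equiv 0\pmod 3$ the sharp upper bounds come from an entirely different source, namely Feller's theorem on algebraic cobordisms between the torus links $T_{2,k+1}$ and $T_{3,b}$, which your proposal does not mention and which cannot be recovered from the blow-up bookkeeping you describe.
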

Studying polynomials of bidegree $(a,b)$ is interesting on its own, however it could also help to determine the asymptotical behaviour of $N(d)$, thanks to the following result.
\begin{proposition}[Orevkov {\cite{orevkov_2012}}]\label{proposition:Orevkov}
  If $N(a,b)+1\geq k$, then $\alpha\geq \frac{2k}{ab}$.
\end{proposition}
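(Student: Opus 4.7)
The plan is to construct, for every $n\geq 1$, a plane curve of degree $d_n$ carrying an $A_{m_n}$-singularity such that $d_n\to\infty$ and $\tfrac{2m_n}{d_n^{2}}\to\tfrac{2k}{ab}$; since $\alpha$ is a $\limsup$, this yields $\alpha\geq \tfrac{2k}{ab}$. By hypothesis $N(a,b)\geq k-1$, so I fix $F\in\mathbb{C}[x,y]$ of bidegree $(a,b)$ with an $A_{k-1}$-singularity at a point, translated to the origin. The guiding heuristic is the area of the Newton polygon: the $(a,b)$-triangle has area $ab/2$ and supports an $A_{k-1}$-singularity; scaled by $n$ its area becomes $n^{2}ab/2$ and should support an $A_{kn^{2}-1}$-singularity, and if this polygon can be realised as the Newton polygon of a plane curve of degree close to $n\sqrt{ab}$ one obtains $\tfrac{2m_n}{d_n^{2}}\to\tfrac{2k}{ab}$.

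I would implement the construction in two stages. \emph{Amplification.} View $F=0$ as a curve on a Hirzebruch surface $\mathbb{F}_e$ via the main construction of the paper. Pull back along a ramified cyclic $n$-fold cover $\mathbb{F}_{e'}\to\mathbb{F}_e$ whose branch divisor contains the fibre tangent to the $A_{k-1}$-singularity but is otherwise disjoint from the singular point $p$. Locally at $p$, in coordinates in which $F=u^{2}-v^{k}$ and the branch divisor is $v=0$, the pullback reads $u^{2}-\tilde v^{kn}$, an $A_{kn-1}$-singularity; globally the new bidegree is $(na,b)$ (or $(a,nb)$). Iterating in the orthogonal direction produces a curve of bidegree $(na,nb)$ with an $A_{kn^{2}-1}$-singularity concentrated at a single point.

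\emph{Descent.} Apply a toric (monomial) birational map from the Hirzebruch surface to $\mathbb{P}^{2}$, realised by an $\mathrm{SL}_2(\mathbb{Z})$-transformation of the Newton polygon, chosen to be an isomorphism in a neighbourhood of $p$ and to embed the $(na,nb)$-triangle as tightly as possible inside a $(d_n,d_n)$-simplex. Since the target triangle has the same area $n^{2}ab/2$, $d_n$ can in principle be made arbitrarily close to $n\sqrt{ab}$.

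The main obstacle is the descent: $\mathrm{SL}_2(\mathbb{Z})$-integrality prevents achieving the exact minimum $d_n=n\sqrt{ab}$ when $\sqrt{ab}\notin\mathbb{Q}$, so one must work with rational approximations $p/q\to\sqrt{ab}$ and pass to a subsequence of $n$'s; since we only need a $\limsup$, arbitrarily good approximations suffice and yield $\alpha\geq\tfrac{2k}{ab}$. The amplification stage is by comparison mechanical: on a Hirzebruch surface the cyclic covers are toric and explicit, and the local effect on an $A$-singularity of ramifying along the tangent direction is given by the model $u^{2}-v^{k}\mapsto u^{2}-v^{kn}$, as above.
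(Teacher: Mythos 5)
The paper does not prove this proposition --- it is quoted from Orevkov's article without proof --- so there is nothing internal to compare against; judged on its own, your argument has the right heuristic (the area of the Newton polygon is the invariant that produces $ab$ in the denominator) but both load-bearing steps fail. The amplification breaks at the second cover. The first one is fine: if the branch fibre meets the singular point transversally to the tangent direction of the $A_{k-1}$-point (contact exactly $2$), then locally $u^2-v^k\mapsto u^2-\tilde v^{nk}$ and you obtain an $A_{nk-1}$-singularity in bidegree $(a,nb)$. But iterating ``in the orthogonal direction'' means ramifying along the tangent line $u=0$ of $u^2-v^{nk}$, which turns it into $\tilde u^{2n}-v^{nk}$, a point of multiplicity $2n\geq 4$; by Lemma~\ref{lemma:OneBlowUpOrDownOfAk} this is not an $A$-singularity. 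Ramifying a second time along a transversal direction instead keeps the $A$-type but lands in bidegree $(a,n^2b)$, whose Newton triangle is hopelessly elongated. Producing bidegree $(na,nb)$ with an $A_{kn^2-1}$-point is exactly the quadratic amplification that the proposition encodes, and toric covers do not deliver it.

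The descent fails independently. Equal area is necessary but far from sufficient for a $\mathrm{GL}_2(\mathbb{Z})$-containment of Newton polygons: the lattice length of an edge is a unimodular invariant, and every lattice segment contained in the degree-$d$ simplex has lattice length at most $d$. The triangle of bidegree $(na,nb)$ has an edge of lattice length $n\max(a,b)$, so any monomial image of it inside a $(d,d)$-simplex forces $d\geq n\max(a,b)$; even granting the amplification this yields only $\alpha\geq 2k/\max(a,b)^2$, which is strictly weaker than $2k/(ab)$ whenever $a\neq b$ (for Orevkov's own application $(a,b)=(4,6)$, $k=14$, it gives $7/9$ instead of $7/6$). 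The obstruction is not the irrationality of $\sqrt{ab}$, so passing to subsequences of rational approximations does not help. There is also the side issue that monomial maps are isomorphisms only on the torus, while your singular point sits at the origin, and translating it into the torus destroys the bidegree constraint. The genuine proof must amplify with non-monomial, Cremona-type transformations adapted to the singular point --- much in the spirit of the chains of elementary links of Sections~\ref{section:ToBe} and~\ref{section:OrNotToBe} of this paper --- rather than with toric covers followed by a toric descent.
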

And in fact, it \textit{does} help: Luengo found $N(4,6)\geq13$, Orevkov applied this proposition and got $\alpha\geq\frac{7}{6}$. Initially, we hoped to improve this bound, but the best we get with our results is $N(3,11)=17$ yielding $\alpha\geq\frac{12}{11}\simeq 1.09$.

In fact, using $N(3,b)$ it is not possible to obtain a better lower bound than Orevkov's $\alpha\geq \frac{7}{6}$:
A result in knot theory by Feller~\cite{feller_2016} about the existence of algebraic cobordisms between the torus knots $T_{2,k+1}$ and $T_{3,b}$ gives an upper bound for $N(3,b)$ if $b$ is no multiple of $3$ (namely $\frac{5b-4}{3}$, c.f. Lemma~\ref{lemma:KnotUpperBound}), which implies that $\frac{2(N(3,b)+1)}{3b}<\frac{7}{6}$ for all $b>13$ (c.f. Lemma~\ref{lemma:NotAsGoodAsOrevkov}).
Theorem~\ref{theorem:TheTheoremTable} provides the result for $b\leq12$.\\

In Section~\ref{section:PolynomialVsDivisor}, we provide an algebro-geometric tool how to translate a polynomial $F$ on the affine plane of bidegree $(a,b)$ into a curve $C$ on a Hirzebruch surface $\FFF_m$, where $m$ is the integer such that $b=am-r$ for some $0\leq r<a$.
We will call $C$ the \textit{$(a,am)$-divisor of $F$} (c.f. Definition~\ref{definition:a-am-divisor}).
Lemma~\ref{lemma:PolynomialVsDivisor} gives, in particular, a geometric description of $C$.

As an application of the discribed correspondence, we will extensively study bidegree $(3,b)$ in Sections~\ref{section:ToBe},~\ref{section:OrNotToBe} and~\ref{section:KnotTheory}  and prove Theorem~\ref{theorem:TheTheoremTable} in the end. Since Theorem~\ref{theorem:TheTheoremTable} is a question about the maximal $k$, its proof consists of two parts: existence (Section~\ref{section:ToBe}) and non-existence (Section~\ref{section:OrNotToBe}).

Both sections start with providing a ``recipe'' how one can translate the curve $C$ with a large $A_k$-singularity into a curve on $\FFF_{1}$ (or $\FFF_0$) that is (almost) smooth (c.f. Section~\ref{subsection:OrNotToBe--TheRecipe} and Remark~\ref{remark:3SectionGetsEventuallySmooth}), and vice versa (c.f. Section~\ref{section:ToBe--Recipe}).
This is achieved with a chain of elementary links centered at the singularity (c.f. Definition~\ref{definition:SingularChainOfNLinks}), respectively the inverse of this birational map (c.f. Definition~\ref{definition:transversalChainOfnLinks}).

In Section~\ref{section:ToBe--Ingredients} we present ``ingredients'' in $\PPP^2$ that we can blow-up to $\FFF_1$ and then use the recipe to ``cook'' large singularities, giving a lower bound for $N(3,b)$.

In Section~\ref{subsection:OrNotToBe--NonExistence}, an upper bound for $N(3,b)$ is given by showing that the ``ingredients'' that are required by the recipe do not exist if $k$ is too large with respect to $b$.
However, the best upper bound is already given in the result by Feller mentioned above, to which we give a short introduction in Section~\ref{section:KnotTheory}.
This is the reason why we only present the non-existence of configurations in the case where $b$ is a multiple of $3$, since the other computations do not add any value to this paper.

Theorem~\ref{theorem:TheTheoremTable} stops at $b=12$, because the computations are done case-by-case and get more and more tedious.
It would be interesting to have a family of curves of bidegree $(3,b)$ with increasing $b$ that have maximal $A_k$-singularity.

Moreover, in Remark~\ref{remark:SpecialWeierstrassPoints} we observe a connection to Weierstrass points on $\PPP^1\times\PPP^1$, recently introduced in \cite{maugesten-moe_2018}.

\bigskip

I thank Peter Feller for introducing knot theory and its connections to algebraic geometry to me, Mattias Hemmig for helpful discussions, and my PhD advisor J\'er\'emy Blanc for guidance.

\section{Preliminaries}\label{section:Preliminaries}

In Section~\ref{subsection:HirzebruchSurfaces} we recall what a Hirzebruch surface is and fix our notation.
Then, we introduce singularities of type $A_k$ in Section~\ref{subsection:SingularitiesOfTypeAk} and observe what happens when blowing up such a singularity.
We continue to provide some easy bounds in Section~\ref{subsection:BabyBounds}.
To conclude the preliminaries, we introduce in Section~\ref{subsection:Link} the notions ``$p$-link'' and ``cofiberedness'' on a Hirzebruch surface and explain why these are of interest in our setting.

\subsection{Hirzebruch surfaces}\label{subsection:HirzebruchSurfaces}
Let $m\geq0$ be an integer.
The $m$-th \textit{Hirzebruch surface} $\FFF_m$ is defined to be the quotient of $\left(\AAA^2\setminus\{(0,0)\}\right)^2$ modulo the following equivalence relation on it: The two points $\left((x_0,x_1),(y_0,y_1)\right)$ and $\left((x_0',x_1'),(y_0',y_1')\right)$ are equivalent if there are $\lambda,\mu\in \CCC^*$ such that \[\left((x_0,x_1),(y_0,y_1)\right)=\left((\mu x_0',\lambda^{-m}\mu x_1'),(\lambda y_0',\lambda y_1')\right).\]
We denote the equivalence class of $\left((x_0,x_1),(y_0,y_1)\right)$ by $[x_0:x_1;y_0:y_1]$.

We will always see $\FFF_m$ as a $\PPP^1$-bundle over $\PPP^1$, via $[x_0:x_1;y_0:y_1]\mapsto[y_0:y_1]$.
The fibers are then the curves of the form $\alpha y_0+\beta y_1=0$ for $[\alpha:\beta]\in\PPP^1$.

The section given by $x_1=0$ is denoted by $S_-$ and has self-intersection $-m$.
On the other hand, we denote by $S_+$ the section given by $x_0=0$, which has self-intersection $m$.

We can visualize this surface with the following figure, where the number in the bracket denotes the self-intersection:
\begin{center}
\begin{tikzpicture}[scale=0.9]
  \def\abw{0.2}
  \def\fontsz{\tiny}
   \draw (-3-\abw+0,0) to (-1+\abw+0,0);
   \draw (-3+0,0-\abw) to (-3+0,2+\abw);
   \node (a) at (-2+0,0){};
    \node () [below=-0.2cm of a] {\fontsz $x_1=0$};
    \node () [above=-0.2cm of a] {\fontsz $[-m]$};
   \node (b) at (-3+0,1){};
    \node () [above=0cm of b, rotate=90] {\fontsz $y_0=0$};
    \node () [below=0cm of b, rotate=90] {\fontsz $[0]$};

  \draw (-3-\abw+0,2) to (-1+\abw+0,2);
  \draw (-1+0,0-\abw) to (-1+0,2+\abw);
  \node (c) at (-2+0,2){};
      \node () [above=-0.2cm of c] {\fontsz $x_0=0$};
      \node () [below=-0.2cm of c] {\fontsz $[m]$};
  \node (d) at (-1+0,1){};
     \node () [below=-0.2cm of d,rotate=90] {\fontsz $y_1=0$};
     \node () [above=-0.2cm of d,rotate=90] {\fontsz $[0]$};

\end{tikzpicture}
\end{center}

Moreover, $\FFF_m\setminus\left(\{x_i=0\}\cup\{y_j=0\}\right)\simeq\AAA^2$ for $i,j=0,1$.
Hence we can embedd $\AAA^2$ into $\FFF_m$ for example with $\iota_m:\AAA^2\hookrightarrow\FFF_m, (x,y)\mapsto[x:1;y:1]$, as the following picture illustrates:
\begin{center}
\begin{tikzpicture}[scale=0.9]
  \def\abw{0.2}
  \def\fontsz{\tiny}
   \draw (-3-\abw+0,2) to (-1+\abw+0,2);
   \draw (-3+0,0-\abw) to (-3+0,2+\abw);
   \node (a) at (-2+0,2){};
    \node () [above=-0.2cm of a] {\fontsz $x=0$};
   \node (b) at (-3+0,0.5){};
    \node () [above=0cm of b, rotate=90] {\fontsz $y=0$};
    \draw (0,1) node{$\stackrel{\iota_m}\longrightarrow$};
    \node () at (-3,2){$\bullet$};
  \begin{scope}[xshift=120]
       \draw[dashed] (-3-\abw+0,0) to (-1+\abw+0,0);
       \draw[] (-3+0,0-\abw) to (-3+0,2+\abw);
       \node (a) at (-2+0,0){};
        \node () [below=-0.2cm of a] {\fontsz $x_1=0$};
        \node () [above=-0.2cm of a] {\fontsz $[-m]$};
       \node (b) at (-3+0,1){};
        \node () [above=0cm of b, rotate=90] {\fontsz $y_0=0$};
        \node () [below=0cm of b, rotate=90] {\fontsz $[0]$};

      \draw (-3-\abw+0,2) to (-1+\abw+0,2);
      \draw[dashed] (-1+0,0-\abw) to (-1+0,2+\abw);
      \node (c) at (-2+0,2){};
          \node () [above=-0.2cm of c] {\fontsz $x_0=0$};
          \node () [below=-0.2cm of c] {\fontsz $[m]$};
      \node (d) at (-1+0,1){};
         \node () [below=-0.2cm of d,rotate=90] {\fontsz $y_1=0$};
         \node () [above=-0.2cm of d,rotate=90] {\fontsz $[0]$};
         \node () at (-3,2){$\bullet$};
   \end{scope}
\end{tikzpicture}
\end{center}

Recall that for each divisor $D$ on $\FFF_m$ there are integers $a,b$ with $D\sim aS_-+bf$.
If $D$ is effective, then $a$ and $b$ are at least $0$.
Moreover, if $D$ is irreducible and $D\neq S_-$, we have $0\leq D\cdot S_-=-am+b$ and hence $b\geq am$.

\begin{definition}\label{definition:aSection}
  Let $a\geq 1$ be an integer and let $C\subset\FFF_m$ be an effective divisor not containing any fibers.
  We call $C$ an $a$-\textit{section} if $C\cdot f=a$ for fibers $f\in\FFF_m$.
\end{definition}
Note that a $1$-section is a smooth, irreducible curve isomorphic to $\PPP^1$. Therefore, it will simply be called a \textit{section}.

\bigskip
\subsection{Singularities of type $A_k$}\label{subsection:SingularitiesOfTypeAk}

\begin{definition}\label{definition:SingularitiesOfTypeAk}
  Let $C$ be a curve on a smooth surface.
  A point $s\in C$ is called \textit{singularity of type} $A_k$ for some integer $k\geq1$ if there are local analytic coordinates in which $C$ around $s$ is given by the equation $y^2-x^{k+1}=0$.
  We sometimes abuse the notation and say that a smooth point has a ``singularity'' of type $A_0$.
\end{definition}

For small $k$, the real part of an $A_k$-singularity looks locally like the following:
\begin{center}
\begin{tikzpicture}[scale=0.3]
\begin{axis}[title=\Huge{$k=1$}, axis lines=none]
\addplot[color=red]{x};
\addplot[color=red]{-x};
\end{axis}
\end{tikzpicture}%
\begin{tikzpicture}[scale=0.3]
\begin{axis}[title=\Huge{$k=2$},axis lines=none, samples=200]
\addplot[color=red]{sqrt(x^3)};
\addplot[color=red]{-sqrt(x^3)};
\end{axis}
\end{tikzpicture}%
\begin{tikzpicture}[scale=0.3]
\begin{axis}[title=\Huge{$k=3$},axis lines=none]
\addplot[color=red]{x^2};
\addplot[color=red]{-x^2};
\end{axis}
\end{tikzpicture}%
\begin{tikzpicture}[scale=0.3]
\begin{axis}[title=\Huge{$k=4$},axis lines=none]
\addplot[color=red]{sqrt(x^5)};
\addplot[color=red]{-sqrt(x^5)};
\end{axis}
\end{tikzpicture}%
\begin{tikzpicture}[scale=0.3]
\begin{axis}[title=\Huge{$k=5$},axis lines=none]
\addplot[color=red]{x^3};
\addplot[color=red]{-x^3};
\end{axis}
\end{tikzpicture}%
\begin{tikzpicture}[scale=0.3]
\begin{axis}[title=\Huge{$k=6$},axis lines=none]
\addplot[color=red]{sqrt(x^7)};
\addplot[color=red]{-sqrt(x^7)};
\end{axis}
\end{tikzpicture}%
\end{center}

\begin{remark}\label{remark:OddEvenSingularities}
  If $k$ is odd, then the singularity is reducible and we call it a \textit{node}.
  But there are irreducible curves with such a singularity.

  If $k$ is even, then we call it a \textit{cusp} and the singularity is irreducible. Thus, it cannot arise as the intersection of two curves.
\end{remark}

\begin{example}\label{example:ReducibleCubicPolynomialWithA3}
  Consider the polynomial $F=y(y-x^2)\in\CCC[x,y]$ of degree $3$ and the map%
  \begin{align*}
    \varphi:\CCC^2&\to\CCC^2\\
    (x,y)&\mapsto\left(i\sqrt{2}\,x,y-x^2\right).
  \end{align*}%
  This map sends $F$ onto $(y-x^2)(y+x^2)=y^2-x^4$.
  So $V(F)$ is sent onto $y^2-x^4=0$, which corresponds to an $A_3$-singularity.
  The map $\varphi$ is holomorphic and it has a holomorphic inverse, given by $(u,v)\mapsto\left(\frac{-i}{\sqrt{2}}\,u,v-\frac{1}{2}u^2\right)$.
  Therefore, there is a local analytic isomorphism that sends $V(F)$ onto $y^2-x^4=0$ and so $F$ (respectively $V(F)$) has an $A_3$-singularity at the origin.

  The following picture illustrates the (real part of the) zero set of $F$: \begin{center}
  \begin{tikzpicture}[scale=0.3,baseline=-2]
     \begin{axis}[title=\Huge{$y(y-x^2)=0$},axis lines=none]
       \addplot[color=red]{x^2};
       \addplot[color=red]{0};
     \end{axis}
   \end{tikzpicture}%
  \end{center}
\end{example}

The following result by Wall in \cite{wall_2004} shows that singularities of type $A_k$ arise naturally.

\begin{lemma}[Theorem 2.2.7 in \cite{wall_2004}]\label{lemma:Multiplicity2ImpliesAk}
  Let $C$ be a curve with a point of multiplicity $2$ that is reduced at the point.
  Then that point is a singularity of type $A_k$ for some $k\geq 1$.
\end{lemma}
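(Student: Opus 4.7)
The plan is to work in local analytic coordinates and apply a combination of the Morse lemma and the splitting (Weierstrass preparation) argument. Choose analytic coordinates $(x,y)$ centered at the point $s$, so $C$ is locally cut out by a convergent power series $f\in\CCC\{x,y\}$ with $f(0,0)=0$. The hypothesis that $s$ has multiplicity $2$ means $f$ has order exactly $2$, so $f=f_2+f_3+\cdots$ where $f_i$ is homogeneous of degree $i$ and $f_2\neq 0$. The hypothesis that $C$ is reduced at $s$ means that $f$ is square-free in the local analytic ring $\CCC\{x,y\}$.

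I would split into two cases according to the rank of the quadratic form $f_2$. If $f_2$ factors as a product of two distinct linear forms, then $f_2$ is non-degenerate and the holomorphic Morse lemma (equivalently, a direct diagonalization of the Hessian followed by absorption of higher-order terms into the quadratic part) gives an analytic change of coordinates in which $f$ becomes $xy$, i.e., $y^2-x^2$ after a further linear change. This is the $A_1$ case.

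If instead $f_2$ is the square of a linear form, a linear change of coordinates puts $f_2=y^2$. I would then apply the Weierstrass preparation theorem (viewing $f$ as regular of order $2$ in $y$) to write $f=u\cdot(y^2+a_1(x)\,y+a_0(x))$ with $u\in\CCC\{x,y\}^{\times}$ a unit and $a_i(x)\in\CCC\{x\}$ vanishing at $0$. Completing the square via $y\mapsto y-\tfrac12 a_1(x)$ eliminates the linear term in $y$, producing
\[
f \sim y^2 + b(x),
\]
for some $b\in\CCC\{x\}$ with $b(0)=0$. Since $f_2=y^2$ contains no $x^2$-term, one checks that $b$ has order $\geq 3$ in $x$; and since $f$ is square-free, $b$ cannot vanish identically. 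Write $b(x)=c\,x^{k+1}\,h(x)$ with $c\in\CCC^{\times}$, $h\in\CCC\{x\}^{\times}$, $h(0)=1$, and $k\geq 2$. A final analytic coordinate change $x\mapsto \lambda\,x\,h(x)^{1/(k+1)}$, with $\lambda$ chosen so that $c\lambda^{k+1}=-1$, brings the equation into the normal form $y^2-x^{k+1}=0$, which is precisely an $A_k$-singularity.

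Combining both cases yields $A_k$ for some $k\geq 1$. The only real care required is in the degenerate case: one must verify that the various formal changes of variables (completing the square, taking a $(k+1)$-th root of a unit) are genuinely convergent analytic isomorphisms near the origin, which is standard because $\CCC\{x\}$ contains $n$-th roots of units and the Weierstrass preparation theorem holds in the convergent category. No further geometric input is needed; this is essentially the classification of corank $1$ hypersurface singularities of a plane curve, which is the main obstacle to make fully rigorous but is entirely elementary.
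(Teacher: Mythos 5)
Your proof is correct, and it is the standard argument: reduce to a Weierstrass polynomial $y^2+a_1(x)y+a_0(x)$, complete the square, and absorb the unit factor of $b(x)=c\,x^{k+1}h(x)$ by an analytic reparametrization of $x$ (the nondegenerate case being the holomorphic Morse lemma). The one point that genuinely needs checking --- that $b$ is not identically zero --- you correctly tie to the reducedness hypothesis, and the claim that $b$ has order at least $3$ when $f_2=y^2$ does follow from comparing degree-$2$ parts in $f=u\cdot W$ (it forces $\operatorname{ord}a_1\geq 2$ and $\operatorname{ord}a_0\geq 3$), though it is not actually needed for the statement, only for pinning down $k\geq 2$ in that branch. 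The paper itself gives no proof: it cites the result as Theorem 2.2.7 of Wall's \emph{Singular Points of Plane Curves}, so your write-up is a self-contained substitute for that citation rather than an alternative to an argument in the text; the route you take is essentially the one found in the reference.
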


We describe the notion of singularities of type $A_k$ using blow-ups.

\begin{lemma}\label{lemma:OneBlowUpOrDownOfAk}
  Let $\pi:Y\to X$ be the blow-up centered at $s\in X$ with exceptional divisor $E\subset Y$.
  Let $C\subset X$ be a curve reduced at $s$ and let $\tilde C\subset Y$ be its strict transform.
  \begin{enumerate}
    \item\label{item:OneBlowUpOrDownOfAk--equivalence} The following are equivalent:
      \begin{enumerate}
        \item\label{item:OneBlowUpOrDownOfAk--multiplicity2} $m_s(C)=2$
        \item\label{item:OneBlowUpOrDownOfAk--intersectionEC} $\tilde C\cdot E=2$
        \item\label{item:OneBlowUpOrDownOfAk--Ak} $C$ has an $A_k$-singularity at $s$ for some $k\geq 1$.
      \end{enumerate}
    \item\label{item:OneBlowUpOrDownOfAk--IToIII} If \ref{item:OneBlowUpOrDownOfAk--equivalence} holds, then the following statements hold:
      \begin{enumerate}[label=(\Roman*)]
        \item\label{item:OneBlowUpOrDownOfAk--I} $\tilde C\cap E$ contains two distinct points if and only if $k=1$.
        \item\label{item:OneBlowUpOrDownOfAk--II} $\tilde C \cap E=\{s'\}$ where $s'\in \tilde C$ is smooth if and only if $k=2$.
        \item\label{item:OneBlowUpOrDownOfAk--III} $\tilde C\cap E=\{s'\}$ where $s'\in\tilde C$ is a singular point of type $A_{k-2}$ if and only if $k\geq 3$.
      \end{enumerate}
  \end{enumerate}
  Moreover, in case \ref{item:OneBlowUpOrDownOfAk--II} the exceptional divisor $E$ and $\tilde C$ are tangent at $s'$.
\end{lemma}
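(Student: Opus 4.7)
The plan is to prove Part~(1) via the standard multiplicity formula for blow-ups, and Part~(2) by a direct computation in local analytic coordinates realising the $A_k$-model $y^2-x^{k+1}=0$.

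For Part~(1), I would use that $\pi^*C=\tilde C+mE$ where $m=m_s(C)$. Intersecting with $E$ and using $\pi^*C\cdot E=0$ together with $E^2=-1$ yields $\tilde C\cdot E=m$, which gives the equivalence of \ref{item:OneBlowUpOrDownOfAk--multiplicity2} and \ref{item:OneBlowUpOrDownOfAk--intersectionEC}. For the equivalence with \ref{item:OneBlowUpOrDownOfAk--Ak}, the model $y^2-x^{k+1}=0$ has multiplicity $2$ at the origin for every $k\geq 1$, so \ref{item:OneBlowUpOrDownOfAk--Ak}$\Rightarrow$\ref{item:OneBlowUpOrDownOfAk--multiplicity2} is immediate, while the converse is exactly Lemma~\ref{lemma:Multiplicity2ImpliesAk} applied to the reduced multiplicity-$2$ point.

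For Part~(2), I would fix local analytic coordinates $(x,y)$ at $s$ in which $C$ is $y^2-x^{k+1}=0$, and compute $\pi$ in the two standard affine charts. In the chart $y=xt$ the total transform factors as $x^2(t^2-x^{k-1})$, so $\tilde C$ has local equation $t^2=x^{k-1}$ and $E$ is $\{x=0\}$. In the chart $x=uy$ the total transform is $y^2(1-u^{k+1}y^{k-1})$, so $\tilde C$ has local equation $1=u^{k+1}y^{k-1}$; this meets $E:\{y=0\}$ only for $k=1$, giving the points $u=\pm 1$ that also appear in the first chart. Hence the behaviour of $\tilde C$ along $E$ is encoded by $t^2-x^{k-1}=0$: for $k=1$ this is $t^2=1$, two distinct points, giving \ref{item:OneBlowUpOrDownOfAk--I}; for $k=2$ the curve $t^2=x$ is smooth at the origin and its tangent line $\{x=0\}$ coincides with $E$, yielding \ref{item:OneBlowUpOrDownOfAk--II} together with the final tangency claim; for $k\geq 3$ I would rewrite $t^2-x^{k-1}=0$ as the model of an $A_{k-2}$-singularity (since $k-1=(k-2)+1$), giving \ref{item:OneBlowUpOrDownOfAk--III}.

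The main subtlety, rather than a serious obstacle, is to justify that one may genuinely work with the model equation. The analytic isomorphism bringing $C$ into the form $y^2-x^{k+1}=0$ is defined only on a neighbourhood of $s$, but blow-ups are local over the base and functorial under analytic isomorphisms fixing the centre, so the blow-up of the model computes the local analytic type of $\tilde C$ along $\pi^{-1}(s)$. All assertions in Part~(2) --- the number of points of $\tilde C\cap E$, their smoothness or $A_{k-2}$-type, and the tangency with $E$ --- are invariants of that local analytic model, so the above chart computation suffices.
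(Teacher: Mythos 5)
Your proposal is correct and follows essentially the same route as the paper: reduce to the local analytic model $y^2-x^{k+1}=0$ and compute the blow-up in charts, with the equivalence \ref{item:OneBlowUpOrDownOfAk--multiplicity2}$\Leftrightarrow$\ref{item:OneBlowUpOrDownOfAk--Ak} handled via Lemma~\ref{lemma:Multiplicity2ImpliesAk} exactly as in the paper. The only cosmetic differences are that you check the second chart explicitly and read off the tangency in case \ref{item:OneBlowUpOrDownOfAk--II} from the equation $t^2=x$, whereas the paper deduces it from $\tilde C\cdot E=2$ being concentrated at the single point $s'$; both are fine, and the ``if and only if'' claims follow in either version from the mutual exclusivity of the three cases.
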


\begin{proof}
  The statements \ref{item:OneBlowUpOrDownOfAk--multiplicity2} and \ref{item:OneBlowUpOrDownOfAk--intersectionEC} are equivalent, and ``\ref{item:OneBlowUpOrDownOfAk--multiplicity2}$\implies$\ref{item:OneBlowUpOrDownOfAk--Ak}'' is exactly Lemma~\ref{lemma:Multiplicity2ImpliesAk}.
  As an $A_k$-singularity is locally given by $y^2-x^{k+1}=0$, which is of multiplicity $2$ in the origin, statement \ref{item:OneBlowUpOrDownOfAk--Ak} implies \ref{item:OneBlowUpOrDownOfAk--multiplicity2}.
  So \ref{item:OneBlowUpOrDownOfAk--equivalence} holds.

  To establish \ref{item:OneBlowUpOrDownOfAk--IToIII}, it is enough to consider $C$ in a neighbourhood of $(0,0)\in\AAA^2$, where it is given by the equation $y^2-x^{k+1}=0$.
  Locally, the blow-up is given by $\pi:\AAA^2\to\AAA^2, (x,y)\mapsto (x,xy)$, so the exceptional divisor $E$ is defined by $x=0$.
  The preimage $\pi^{-1}(C)$ is given by $x^2(y^2-x^{k-1})=0$, hence the strict transform $\tilde C$ is given by $y^2-x^{k-1}=0$, which corresponds to an $A_{k-2}$-singularity if $k\geq 2$.
  If $k\geq 3$, then $k-2\geq1$, so it is a singular point and we have \ref{item:OneBlowUpOrDownOfAk--III}.
  If $k=2$, then $k-2=0$, so it is a smooth point and we have \ref{item:OneBlowUpOrDownOfAk--II}.
  If $k=1$, we have that $\tilde C$ is given by $y^2-1=0$, so the exceptional divisor intersects the exceptional divisor $E$ at two points, namely at $(0,1)$ and $(0,-1)$, and we have \ref{item:OneBlowUpOrDownOfAk--I}.

  Since the three cases  \ref{item:OneBlowUpOrDownOfAk--I}, \ref{item:OneBlowUpOrDownOfAk--II} and  \ref{item:OneBlowUpOrDownOfAk--III} cannot occur simultaneously, we have proved the ``if and only if''-statements in \ref{item:OneBlowUpOrDownOfAk--IToIII}.

  To conclude the proof, note that in case \ref{item:OneBlowUpOrDownOfAk--II} there is only one point on $\tilde C\cap E$ but by \ref{item:OneBlowUpOrDownOfAk--intersectionEC} we have $\tilde C\cdot E=2$.
  Thus, $I_{s'}(\tilde C,E)=2$, so $E$ and $\tilde C$ are tangent at $s'$.
  This achieves the proof.
\end{proof}

\begin{corollary}\label{corollary:SeveralBlowUpOfAk}
  Let $C$ be a curve on a smooth surface $X$ with an $A_k$-singularity at some point $s$.
  Then there exists a sequence $\pi:Y\to X$ of $\ceil{\frac{k}{2}}$ blow-ups such that the strict transform $\tilde C$ is smooth at the intersection with $\pi^{-1}(s)$.
\end{corollary}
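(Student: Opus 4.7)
The plan is to argue by induction on $k$, using Lemma~\ref{lemma:OneBlowUpOrDownOfAk} as the one-step engine. The base cases $k=1$ and $k=2$ both have $\lceil k/2\rceil=1$, and Lemma~\ref{lemma:OneBlowUpOrDownOfAk}\ref{item:OneBlowUpOrDownOfAk--I} and \ref{item:OneBlowUpOrDownOfAk--II} say precisely that a single blow-up centered at $s$ produces a strict transform $\tilde C$ whose intersection with the exceptional divisor $E=\pi^{-1}(s)$ consists only of smooth points of $\tilde C$ (two transverse smooth points if $k=1$, one smooth tangent point if $k=2$).

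For the inductive step, assume $k\geq 3$ and that the result holds for all smaller values. By Lemma~\ref{lemma:OneBlowUpOrDownOfAk}\ref{item:OneBlowUpOrDownOfAk--III}, a single blow-up $\pi_1:X_1\to X$ at $s$ produces a strict transform $\tilde C_1$ which meets the exceptional divisor $E_1$ at a unique point $s_1$, and at $s_1$ the curve $\tilde C_1$ has an $A_{k-2}$-singularity. Since $X_1$ is again smooth, the inductive hypothesis applied to $(\tilde C_1,s_1)$ yields a sequence of blow-ups $\pi_2:Y\to X_1$ consisting of $\lceil (k-2)/2\rceil=\lceil k/2\rceil-1$ blow-ups such that the strict transform of $\tilde C_1$ (which is also the strict transform of $C$ under $\pi_1\circ\pi_2$) is smooth at its intersection with $\pi_2^{-1}(s_1)$. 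Composing, $\pi:=\pi_1\circ\pi_2$ is a sequence of $\lceil k/2\rceil$ blow-ups.

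The only subtlety is checking that the strict transform of $C$ under the composed map $\pi$ is smooth at \emph{every} point of $\pi^{-1}(s)$, not merely above $s_1$. But the exceptional fiber $\pi^{-1}(s)$ decomposes as the union of the strict transform of $E_1$ under $\pi_2$ together with $\pi_2^{-1}(s_1)$. Away from $s_1$, the strict transform $\tilde C_1$ was already disjoint from $E_1$, so the strict transform of $C$ under $\pi$ meets $\pi^{-1}(s)$ only over $s_1$, where smoothness is guaranteed by the inductive step. This finishes the induction, and no step is really an obstacle — the content is entirely in Lemma~\ref{lemma:OneBlowUpOrDownOfAk}, which has already been proved.
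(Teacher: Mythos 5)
Your proof is correct and follows essentially the same route as the paper: both arguments iterate Lemma~\ref{lemma:OneBlowUpOrDownOfAk} $\ceil{\frac{k}{2}}$ times, with cases \ref{item:OneBlowUpOrDownOfAk--I} and \ref{item:OneBlowUpOrDownOfAk--II} terminating the process and case \ref{item:OneBlowUpOrDownOfAk--III} dropping the singularity type from $A_k$ to $A_{k-2}$. Your explicit verification that smoothness holds at every point of $\pi^{-1}(s)$ (not just over $s_1$) is a detail the paper leaves to the figure, but it does not change the argument.
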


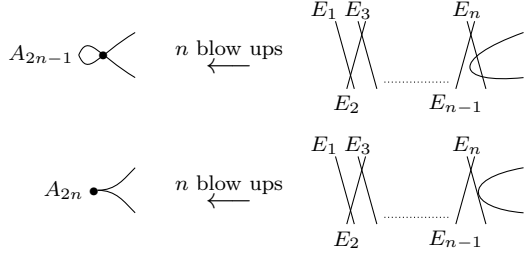
\begin{figure}[h]
  \begin{center}
    \begin{tikzpicture}[xscale=0.5,yscale=0.3,baseline=(b.base)]
      \node (b) at (0.15,-0.05){\tiny{$\bullet$}};
      \draw (-0.5,0) ..controls (-0.25,-1) and (0,0).. (1,1);
      \draw (-0.5,0) ..controls (-0.25,1) and (0,0).. (1,-1);
      \node (nothing) at (-2.5,0){};
      \draw (-1.5,0) node{\scriptsize{$A_{2n-1}$}};
      \draw (3.5,0) node{$\stackrel{n\text{ blow ups}}\longleftarrow$};
      \begin{scope}[xshift=180]
        \draw (0,1.5)--(0.5,-1.5);
        \draw (0.3,-1.5)--(0.8,1.5);
        \draw (0.6,1.5)--(1.1,-1.5);
        \draw[densely dotted] (1.3,-1.2) --(3,-1.2);
        \draw (3.2,-1.5)--(3.7,1.5);
        \draw (3.5,1.5)--(4,-1.5);
        \draw (5,1) ..controls (3,-0.2) and (3.2,-1.3).. (5,-1);
        \draw (-0.3,2) node{\scriptsize{$E_1$}};
        \draw (0.3,-2.2) node{\scriptsize{$E_2$}};
        \draw (0.6,2) node{\scriptsize{$E_3$}};
        \draw (3.2,-2.2) node{\scriptsize{$E_{n-1}$}};
        \draw (3.5,2) node{\scriptsize{$E_n$}};
      \end{scope}
    \end{tikzpicture}
    \begin{tikzpicture}[xscale=0.5,yscale=0.3,baseline=(b.base)]
      \node (b) at (-0.1,-0.05){\tiny{$\bullet$}};
      \draw (0,0) ..controls (0.5,0) and (0.7,0.5).. (1,1);
      \draw (0,0) ..controls (0.5,0) and (0.7,-0.5).. (1,-1);
      \node (nothing) at (-2.5,0){};
      \draw (-0.9,0) node{\scriptsize{$A_{2n}$}};
      \draw (3.5,0) node{$\stackrel{n\text{ blow ups}}\longleftarrow$};
      \begin{scope}[xshift=180]
        \draw (0,1.5)--(0.5,-1.5);
        \draw (0.3,-1.5)--(0.8,1.5);
        \draw (0.6,1.5)--(1.1,-1.5);
        \draw[densely dotted] (1.3,-1.2) --(3,-1.2);
        \draw (3.2,-1.5)--(3.7,1.5);
        \draw (3.5,1.5)--(4,-1.5);
        \draw (5,1) ..controls (3.3,0.5) and (3.5,-0.8).. (5,-1);
        \draw (-0.3,2) node{\scriptsize{$E_1$}};
        \draw (0.3,-2.2) node{\scriptsize{$E_2$}};
        \draw (0.6,2) node{\scriptsize{$E_3$}};
        \draw (3.2,-2.2) node{\scriptsize{$E_{n-1}$}};
        \draw (3.5,2) node{\scriptsize{$E_n$}};
      \end{scope}
    \end{tikzpicture}
  \end{center}
  \caption{Illustration of Corollary~\ref{corollary:SeveralBlowUpOfAk}. Above: $k$ odd, below: $k$ even. For $i=1,\ldots,n$, the exceptional divisor of the $i$-th blow-up is denoted by $E_i$.}
  \label{figure:SeveralBlowUpOfAk}
\end{figure}

\begin{proof}
  If $k=1$ or $k=2$ we are done with applying Lemma~\ref{lemma:OneBlowUpOrDownOfAk} once.
  If $k\geq 3$ let $n=\ceil{\frac{k}{2}}\geq 2$.
  By applying Lemma~\ref{lemma:OneBlowUpOrDownOfAk} $n$ times, we get a sequence of $n$ blow-ups as described in this lemma. Figure~\ref{figure:SeveralBlowUpOfAk} depicts the situation.
\end{proof}

\subsection{Baby bounds}\label{subsection:BabyBounds}

As a warm-up, we give bounds for $N(1,b)$, $N(2,b)$, and $N(3,3)$ in this section and remark that an irreducible curve of genus $g$ has at most an $A_{2g}$-singularity (c.f. Lemma~\ref{lemma:GenusUpperBoundForIrreducibleDivisors}).

\begin{example}\label{example:N1b}
  Let us prove that $N(1,b)=0$ for all integers $b$.
  Let $F$ be a (reduced) polynomial of bidegree $(1,b)$, so $F=\lambda x+ G(y)$, where $G\in\CCC[y]$ is a polynomial in one variable.
  By applying a translation, we may assume that $F$ has an $A_k$-singularity in $(0,0)$.
  If $\lambda\neq 0$, we can parametrize the curve given by the zero set of $F$ by $x=-\lambda^{-1}G(y)$, so it is a smooth curve.
  If $\lambda=0$ then $F=G(y)$ is a polynomial in one variable, and reduced by hypothesis.
  So $F$ has no multiple factors, and hence no singular points.
  Therefore, $F$ is again smooth.
\end{example}

\begin{example}\label{example:a2LowerBound}
  The polynomial $F=x^2-y^{2m-1}$ is of bidegree $(2,2m-1)$ and has an $A_{2m-2}$-singularity.
  Hence $N(2,2m)\geq N(2,2m-1)\geq 2m-2$.

  Note that for bidegree $(2,2)$ the bound is not sharp, since $F=xy$ has an $A_1$-singularity.
  In fact, we will see in Example \ref{example:2SectionWithEvenbLowerBound} that it is not sharp for all bidegree $(2,2m)$.
\end{example}

\begin{lemma}\label{lemma:GenusUpperBoundForIrreducibleDivisors}
  Let $C$ be an irreducible divisor on a smooth surface with a singularity of type $A_k$.
  Then, $k\leq2g(C)$, where $g(C)$ denotes the arithmetic genus of $C$.
\end{lemma}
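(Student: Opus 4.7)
The plan is to bound $k$ by repeatedly resolving the $A_k$-singularity and tracking how much the arithmetic genus drops at each blow-up. The strategy hinges on two facts already prepared in the preliminaries: Corollary~\ref{corollary:SeveralBlowUpOfAk} tells us how many blow-ups are needed, and Lemma~\ref{lemma:OneBlowUpOrDownOfAk} tells us the multiplicity of the strict transform at each step.

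First I would apply Corollary~\ref{corollary:SeveralBlowUpOfAk} to obtain a sequence of $n := \lceil k/2 \rceil$ blow-ups $\pi : Y \to X$ centered at the $A_k$-point $s$ (and at the successive singular points on the strict transforms) such that the strict transform $\tilde{C} \subset Y$ is smooth in a neighbourhood of $\pi^{-1}(s)$. Crucially, at every one of these $n$ blow-ups, Lemma~\ref{lemma:OneBlowUpOrDownOfAk}\ref{item:OneBlowUpOrDownOfAk--equivalence} applies: the center is either the original $A_k$-point or an $A_{k-2i}$-point with $k-2i \geq 1$, so the strict transform always has multiplicity exactly $2$ at the center being blown up.

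Next I would invoke the standard genus formula for a blow-up on a smooth surface: if $\pi' : Y' \to X'$ blows up a point $p$ where a curve $D$ has multiplicity $m$, then the arithmetic genus of the strict transform satisfies $g(\tilde{D}) = g(D) - \binom{m}{2}$. Applying this $n$ times with $m=2$ gives
\[
g(\tilde{C}) = g(C) - n \cdot \binom{2}{2} = g(C) - n = g(C) - \left\lceil \frac{k}{2} \right\rceil.
\]
Since $C$ is irreducible, its strict transform $\tilde{C}$ is an irreducible reduced curve on a smooth surface, so its arithmetic genus is non-negative (writing $g(\tilde{C}) = g_{\mathrm{geom}}(\tilde{C}^\nu) + \sum_p \delta_p$ via the normalization exhibits it as a sum of non-negative terms). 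Therefore
\[
g(C) \geq \left\lceil \frac{k}{2} \right\rceil \geq \frac{k}{2},
\]
which gives $k \leq 2g(C)$.

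There is no real obstacle here — the argument is a clean combination of Corollary~\ref{corollary:SeveralBlowUpOfAk}, Lemma~\ref{lemma:OneBlowUpOrDownOfAk}, and the blow-up genus formula. The only point that deserves a careful mention in the write-up is the justification that $g(\tilde{C}) \geq 0$, which uses the irreducibility hypothesis on $C$ (the claim is false in general for reducible curves, where arithmetic genus can be negative).
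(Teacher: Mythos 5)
Your proof is correct and is essentially the same as the paper's: the paper also counts the $n=\ceil{k/2}$ infinitely near points of multiplicity $2$ (via Lemma~\ref{lemma:OneBlowUpOrDownOfAk} and Corollary~\ref{corollary:SeveralBlowUpOfAk}) and concludes $g(C)\geq\frac{1}{2}\sum m_p(C)(m_p(C)-1)\geq n$, which is just the cumulative form of your step-by-step blow-up genus-drop computation combined with non-negativity of the arithmetic genus of the irreducible strict transform.
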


\begin{proof}
  By Lemma~\ref{lemma:OneBlowUpOrDownOfAk} and Corollary~\ref{corollary:SeveralBlowUpOfAk}, there are $n=\ceil{\frac{k}{2}}$ infinitely near points with multiplicity $2$.
  This yields \[g(C)\geq\frac{1}{2}\sum m_p(C)(m_p(C)-1)\geq n,\] where the sum runs over all singular points of $C$, including infinitely near ones.
  Hence, $k\leq 2n\leq 2g(C)$.
\end{proof}

\begin{lemma}\label{lemma:UpperBoundDegree3}
  Let $F$ be a polynomial of bidegree $(3,3)$ (that is of degree at most $3$) with an $A_k$-singularity.
  Then, $k\leq3$.
  Moreover, if $F$ is irreducible, then $k\leq 2$.
\end{lemma}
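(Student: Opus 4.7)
The plan is to treat the irreducible and the reducible cases separately, using the arithmetic-genus bound of Lemma~\ref{lemma:GenusUpperBoundForIrreducibleDivisors} for the former and a short B\'ezout argument for the latter.

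If $F$ is irreducible, I take the projective closure $\bar C\subset\PPP^2$ of $V(F)$; it is an irreducible plane curve of degree $d=\deg F\le 3$, with arithmetic genus $g(\bar C)=\binom{d-1}{2}\le 1$. Lemma~\ref{lemma:GenusUpperBoundForIrreducibleDivisors} then gives $k\le 2g(\bar C)\le 2$, which is exactly the ``moreover'' clause and a fortiori implies $k\le 3$.

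For the general case, I use the fact that the local equation $y^2-x^{k+1}$ of an $A_k$-singularity is reduced to assume that $F$ is reduced at the singular point $s$, and since irreducible factors of $F$ not passing through $s$ do not alter the analytic germ there, I may write $F=F_1\cdots F_r$ with pairwise coprime irreducible factors all vanishing at $s$, of degrees summing to at most $3$. The case $r=1$ has already been handled. For $r\ge 2$, the numerically possible configurations are: (a) two distinct lines, (b) three distinct lines, or (c) a line $L$ together with an irreducible (hence smooth) conic $Q$. In case (a) the intersection is a transverse node $A_1$; in case (b) each pairwise intersection is an $A_1$, while a point through which all three lines pass has multiplicity~$3$ and is therefore excluded from being of type $A_k$ by Lemma~\ref{lemma:OneBlowUpOrDownOfAk}.

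The key case is (c). By B\'ezout on $\PPP^2$, $I_s(L,Q)\le L\cdot Q=2$, so the two smooth branches $L$ and $Q$ meet at $s$ with intersection multiplicity $m\in\{1,2\}$; a transverse intersection ($m=1$) is an $A_1$, while a tangent intersection ($m=2$) is an $A_3$, as the local equation $y(y-x^2)=y^2-x^2y$ becomes $u^2-\tfrac14 x^4$ under the analytic change $u=y-\tfrac12 x^2$. In both subcases $k\le 3$, which completes the bound. The only delicate point is this identification of the exact singularity type at a tangent intersection, turning the B\'ezout bound on intersection multiplicity into a bound on $k$, but it reduces to a one-line normal-form computation.
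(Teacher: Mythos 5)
Your proposal is correct and follows essentially the same route as the paper: the irreducible case via the arithmetic-genus bound of Lemma~\ref{lemma:GenusUpperBoundForIrreducibleDivisors}, and the reducible case by listing the possible factorizations, with the extremal $A_3$ arising from a line tangent to a smooth conic exactly as in Example~\ref{example:ReducibleCubicPolynomialWithA3}. The only (harmless) difference is that you organize the reducible case by first discarding factors not through the singular point, where the paper instead disposes of degrees $1$ and $2$ at the outset.
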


\begin{proof}
  If the degree of $F$ is one or two, then we already know that $k\leq 2$.
  So we assume that its degree is $3$.

  If $F$ is irreducible, we can homogenize it to an irreducible polynomial $F'$ of degree $3$ in $\CCC[x,y,z]_3$.
  Hence the curve $C=V(F')\subset\PPP^2$ has arithmetic genus $1$.
  Lemma~\ref{lemma:GenusUpperBoundForIrreducibleDivisors} gives $k\leq 2g(C)=2$.

  So let us assume that $F$ is reducible.
  Then the polynomial $F$ is either the product of $3$ linear terms, which can give at most an $A_1$-singularity, or the product of a linear and a quadratic term.
  Let $L\subset\AAA^2$ be the zero set of the linear term and let $Q\subset\AAA^2$ be the zero set of the quadratic term.
  There are two possibilities: \begin{enumerate}
    \item $L$ and $Q$ intersect at two points, and then the intersection is transversal.
    This gives an $A_1$-singularity.
    \item $L$ and $Q$ intersect at one point, and $L$ is a tangent to $Q$.
    This gives an $A_3$-singularity, as in Example \ref{example:ReducibleCubicPolynomialWithA3}.
  \end{enumerate}
\end{proof}

\begin{corollary}\label{corollary:N33}
  $N(3,3)=3$.
\end{corollary}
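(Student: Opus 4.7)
The plan is to combine the two results just established. Lemma~\ref{lemma:UpperBoundDegree3} directly gives the upper bound: any polynomial of bidegree $(3,3)$ with an $A_k$-singularity satisfies $k\leq 3$, so $N(3,3)\leq 3$.

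For the matching lower bound, I would simply invoke Example~\ref{example:ReducibleCubicPolynomialWithA3}, where the reducible cubic $F=y(y-x^2)$ is shown to have an $A_3$-singularity at the origin. Since this polynomial has degree $3$, it is of bidegree $(3,3)$, hence $N(3,3)\geq 3$.

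Combining the two bounds yields $N(3,3)=3$. There is no real obstacle here: the content has all been done in the preceding lemma and example, and the corollary is just an assembly of those two ingredients. The only thing to note is that the maximizer is necessarily reducible, since the irreducible bound of Lemma~\ref{lemma:UpperBoundDegree3} is only $k\leq 2$.
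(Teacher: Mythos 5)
Your proposal is correct and matches the paper's own proof exactly: the upper bound is Lemma~\ref{lemma:UpperBoundDegree3} and the lower bound is the reducible cubic of Example~\ref{example:ReducibleCubicPolynomialWithA3}. Your added observation that the maximizer must be reducible is also correct and consistent with the irreducible bound $k\leq 2$ in that lemma.
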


\begin{proof}
  The upper bound comes from Lemma~\ref{lemma:UpperBoundDegree3} and the existence of such a singularity comes from Example \ref{example:ReducibleCubicPolynomialWithA3}.
\end{proof}

\subsection{Links and cofiberedness}\label{subsection:Link}
Recall Corollary~\ref{corollary:SeveralBlowUpOfAk} and Figure~\ref{figure:SeveralBlowUpOfAk}.
Instead of blowing up the singular point $n$ times, we will do one blow-up at a time in the following way.

\begin{definition}\label{definition:pLink}
  Let $m$ be an integer, let $p\in\FFF_m$ be a point and let $f$ be the fiber containing it.
  A birational map $\pi:\FFF_m\dashrightarrow\FFF_{m\pm1}$ that is the blow-up centered at $p$ followed by the contraction of the strict transform of $f$ to a point $s\in\FFF_{m\pm1}$ will be called \textit{$p$-link from $\FFF_m$} with \textit{inverse point} $s$.
\end{definition}

\begin{remark}
  Note that a $p$-link $\pi$ with inverse point $s$ is a birational map $\FFF_m\dashrightarrow\FFF_{m+1}$ if $p\in S_-$, and it is a birational map $\FFF_m\dashrightarrow\FFF_{m-1}$ if $p\notin S_-$.
  It is uniquely determined by $p$ up to composition with an automorphism of $\FFF_{m\pm1}$.
  Moreover, its inverse $\pi^{-1}:\FFF_{m\pm1}\dashrightarrow \FFF_m$ is a $s$-link of $\FFF_{m\pm1}$ with inverse point $p$, which justifies the denotation of ``inverse point''.
\end{remark}


\begin{definition}\label{definition:Cofibered}
  Let $C\subset\FFF_m$ be a divisor and let $p$ and $p'$ be two distinct points in $\FFF_m$.
  We say that $p'$ is a \textit{cofibered point of $p$ with respect to $C$} (or $p$ and $p'$ are \textit{$C$-cofibered}) if $p$ and $p'$ lie on $C$ and on the same fiber.
\end{definition}

The following lemma shows that when studying $3$-sections with an $A_k$-singularity, cofiberedness is a natural property.

\begin{lemma}\label{lemma:3-SectionSituation}
  Let $C$ be a $3$-section in $\FFF_m$ that has an $A_k$-singularity at a point $s\in C$ for some $k\geq 3$.
  Then, $s$ has a cofibered point $p\in C$.
\end{lemma}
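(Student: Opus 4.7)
The plan is to analyze the fiber $f_s$ through $s$ and count its intersections with $C$. Since $C$ is a $3$-section, by Definition~\ref{definition:aSection} it contains no fibers, so $f_s$ is not a component of $C$. Hence the intersection number $C\cdot f_s = 3$ decomposes as a finite sum of local intersection multiplicities, namely
\[
3 = C\cdot f_s = \sum_{q\in C\cap f_s} I_q(C,f_s).
\]
The goal is to show that $f_s$ meets $C$ in at least one point $p$ other than $s$.

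Since $C$ has an $A_k$-singularity at $s$ with $k\geq 3$, Lemma~\ref{lemma:OneBlowUpOrDownOfAk} gives $m_s(C)=2$, so $I_s(C,f_s)\geq 2$. The key observation is that an $A_k$-singularity, given locally by $y^2-x^{k+1}=0$, has tangent cone $y^2=0$: a unique tangent direction (the line $y=0$), regardless of whether $k$ is even or odd. The argument then splits into two cases depending on whether $f_s$ is tangent to $C$ at $s$ or transversal to this distinguished tangent direction.

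If $f_s$ is transversal to the tangent direction of $C$ at $s$, then $I_s(C,f_s) = m_s(C)\cdot m_s(f_s) = 2$, so $C\cdot f_s=3$ forces the existence of at least one further intersection point $p\in C\cap f_s$ with $p\neq s$. Such a $p$ lies on $C$ and on the same fiber as $s$, so it is cofibered with $s$ with respect to $C$ in the sense of Definition~\ref{definition:Cofibered}.

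It remains to rule out the tangent case. If $f_s$ shared the tangent direction of $C$ at $s$, one can choose local analytic coordinates (using the analytic normal form for an $A_k$-singularity) so that $C$ is given by $y^2-x^{k+1}=0$ and $f_s$ by $y=0$; then
\[
I_s(C,f_s) = \dim_\CCC \CCC\{x,y\}/(y^2-x^{k+1},\,y) = \dim_\CCC \CCC\{x\}/(x^{k+1}) = k+1 \geq 4,
\]
contradicting $C\cdot f_s=3$. The only subtlety is ensuring that the local analytic change of coordinates realizing the $A_k$-normal form can be chosen compatibly with the smooth curve $f_s$ when $f_s$ is tangent to $C$ at $s$; this is standard, since any smooth curve through $s$ tangent to the line $y=0$ can be straightened to $y=0$ by an analytic automorphism preserving the normal form of $C$ up to the same type, so no routine calculation beyond the one displayed is required. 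This contradiction closes the argument.
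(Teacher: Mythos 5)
Your proof is correct and its skeleton is the same as the paper's: expand $C\cdot f_s=3$ into local intersection multiplicities, use $m_s(C)=2$ to get $I_s(C,f_s)\geq 2$, and show that $I_s(C,f_s)=3$ cannot happen when $k\geq 3$, so a second point $p\in C\cap f_s$ must exist. Where you diverge is in excluding the tangential case. The paper blows up $s$ and invokes Lemma~\ref{lemma:OneBlowUpOrDownOfAk}: for $k\geq 3$ the strict transform $\tilde C$ meets the exceptional divisor in a single \emph{singular} point $s'$, so if $\tilde f$ passed through $s'$ one would get $I_{s'}(\tilde C,\tilde f)\geq 2$ and hence $I_s(C,f_s)\geq m_s(C)\,m_s(f_s)+2=4>3$; this is short and reuses machinery already set up. You instead compute in local analytic coordinates, which also works, but the one step that is not justified is the simultaneous normalization: you cannot in general choose coordinates in which $C$ is exactly $y^2-x^{k+1}=0$ \emph{and} $f_s$ is exactly $y=0$ (a smooth curve tangent to the tangent cone of $C$ need not be the symmetric curve $y=0$ of the normal form), so the claimed exact value $I_s(C,f_s)=k+1$ need not hold. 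Fortunately all you need is $I_s(C,f_s)\geq 4$, which is true: straighten $f_s$ to $y=0$ first, write $C$ locally as a unit times $y^2+a(x)y+b(x)$ with $\operatorname{ord}(a)\geq 2$ (unique tangent direction $y=0$); the $A_k$ condition says $\operatorname{ord}\bigl(b-\tfrac{a^2}{4}\bigr)=k+1\geq 4$, hence $\operatorname{ord}(b)\geq 4$ and $I_s(C,f_s)=\operatorname{ord}\bigl(b\bigr)\geq 4$. With that repair (or by simply quoting Lemma~\ref{lemma:OneBlowUpOrDownOfAk} as the paper does), your argument is complete.
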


\begin{proof}
  Since we have $C\cdot f=3$ and $I_s(C,f)\geq 2$ (because an $A_k$-singulartiy has multiplicity $2$), there is either one more point $p\in C\cap f$ with $I_p(C,f)=1$, which means that $s$ and $p$ are $C$-cofibered, or $I_s(C,f)=3$.
  As $C$ does not contain any fiber by assumption, the latter case is not possible :
  It means that there is a point $s'\in \tilde C\cap \tilde f$ in the exceptional divisor of the blow-up centered at $s$ with $I_{s'}(\tilde C,\tilde f)=1$.
  If $k\geq 3$, this is not possible because $s'\in\tilde C$ is a singular point.
\end{proof}

Figure~\ref{figure:3-sectionSituation} depicts the situation of the above lemma.
The reader is urged to keep these pictures in mind when thinking about $3$-sections with a large $A_k$-singularity.

\begin{figure}[h]
  \begin{center}
    \begin{tikzpicture}[xscale=0.5,yscale=0.3,baseline=(b.base)]
      \draw (0,-1) -- (0, 7);
      \node (b) at (0,0){};
      \draw (-2,6) ..controls (-2,5.5) and (-1,5).. (0,5);
      \draw (0,5) ..controls (0.5,5) and (1.5,6).. (2,5);
      \draw (0,5) ..controls (0 ,5.3) and (2,4).. (2,5);
      \draw (0,5) ..controls (-2,4) and (-2,0).. (2,0);
      \draw (0.3,2.5) node{\scriptsize{$f$}};
      \draw (0.3,-0.2) node{\scriptsize{$p$}};
      \draw (0.3,5.6) node{\scriptsize{$s$}};
      \draw (-0.5,5.7) node{\scriptsize{$A_{k}$}};
      \begin{scope}[xshift=180]
        \draw (0,-1) -- (0, 7);
        \node (b) at (0,0){};
        \draw (-2,6) ..controls (-2,5.5) and (-1,5).. (0,5);
        \draw (0,5) ..controls (-2,4) and (-2,0).. (2,0);
        \draw (0.3,2.5) node{\scriptsize{$f$}};
        \draw (0.3,-0.2) node{\scriptsize{$p$}};
        \draw (0.3,5) node{\scriptsize{$s$}};
        \draw (-0.5,5.7) node{\scriptsize{$A_{k}$}};
      \end{scope}
    \end{tikzpicture}%
  \end{center}
  \caption{A $3$-section with a large $A_k$-singularity (left: $k$ odd, right: $k$ even).}
  \label{figure:3-sectionSituation}
\end{figure}
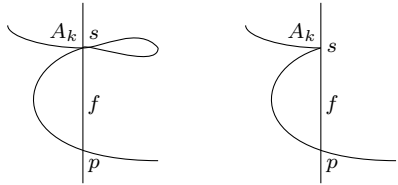

%

\section{Polynomial in $\AAA^2$ vs. Divisor in $\FFF_m$}\label{section:PolynomialVsDivisor}

In this section we study polynomials $F$ in $\AAA^2$ of bidegree $(a,am-r)$ for some $a,m\geq1$ and $0\leq r<a$ and divisors $C\sim aS_+$ in $\FFF_m$.
We obtain a correspondence between such polynomials and divisors in Lemma~\ref{lemma:PolynomialVsDivisor}, which is the main statement of this section.
As an application, we find an upper bound for $A_k$:
If the corresponding divisor (we say: ``$(a,am)$-divisor'', see Definition~\ref{definition:a-am-divisor}) is irreducible, we compute the genus and get Lemma~\ref{lemma:GenusUpperBoundForPolynomials}.
If the divisor is reducible, the bound is stated in Lemma~\ref{lemma:ReducibleDivisorsNotInteresting} in the case where $a=3$.

\bigskip

\begin{lemma}\label{lemma:TriangleToPolynomial}
  Let $a,m,r$ be integers such that $m\geq 1$ and $0\leq r<a$. A polynomial $F\in\CCC[x,y]$ is of bidegree $(a,am-r)$ if and only if it is of the form \[F=\sum_{i=0}^a x^i \sum_{j=0}^{N(i)}a_{ij}y^j,\] where $N(i)=m(a-i)-r+\floor{\frac{ir}{a}}$ for all $i=0,\ldots,a$ and $a_{ij}\in\CCC$.
\end{lemma}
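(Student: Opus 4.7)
The plan is to unpack the definition of bidegree directly. Set $b=am-r$, so that $F\in\CCC[x,y]$ has bidegree $(a,am-r)$ iff every monomial $x^iy^j$ with nonzero coefficient in $F$ has exponent pair $(i,j)$ lying in the closed triangle $T$ with vertices $(0,0)$, $(a,0)$, $(0,b)$. This triangle is cut out by $i\geq 0$, $j\geq 0$, and $bi+aj\leq ab$. Since $(i,j)$ is a lattice point with $0\leq i\leq a$, the whole statement reduces to computing, for each such $i$, the largest integer $j$ for which $(i,j)\in T$.

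First I would observe that this largest $j$ equals
\[
N(i)\;=\;\left\lfloor\frac{b(a-i)}{a}\right\rfloor\;=\;\left\lfloor\frac{(am-r)(a-i)}{a}\right\rfloor.
\]
Then I would simplify this floor to match the formula in the statement. Writing
\[
\frac{(am-r)(a-i)}{a}\;=\;m(a-i)-\frac{r(a-i)}{a}\;=\;\bigl(m(a-i)-r\bigr)+\frac{ri}{a},
\]
and using that $m(a-i)-r\in\ZZZ$, one gets
\[
N(i)\;=\;m(a-i)-r+\left\lfloor\frac{ri}{a}\right\rfloor,
\]
which is precisely the asserted formula. The forward direction of the ``if and only if'' is now immediate: an $F$ of bidegree $(a,am-r)$ may only contain monomials $x^iy^j$ with $0\leq i\leq a$ and $0\leq j\leq N(i)$, so it has the claimed form. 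The backward direction is equally direct, since every monomial $x^iy^j$ appearing in the displayed sum satisfies $(i,j)\in T$ by construction, so the Newton polygon of $F$ is contained in $T$.

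The only potentially tricky step is the floor-function identity above, but it is a one-line manipulation once the fraction is split as $m(a-i)-r+\tfrac{ri}{a}$. A quick sanity check confirms the answer: at $i=0$ we obtain $N(0)=am-r=b$, at $i=a$ we obtain $N(a)=-r+\lfloor r\rfloor=0$ (only the monomial $x^a$ survives on the $x$-axis vertex), and when $r=0$ the formula collapses to $N(i)=m(a-i)$, recovering the standard triangle of bidegree $(a,am)$. Hence there is no real obstacle; the lemma is a clean bookkeeping statement about lattice points under the hypotenuse of the Newton triangle.
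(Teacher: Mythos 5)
Your proposal is correct and follows essentially the same route as the paper: both reduce the claim to the inequality $aj+(am-r)i\leq a(am-r)$, rewrite the bound as $m(a-i)-r+\frac{ir}{a}$, and use integrality of $j$ (equivalently, of $m(a-i)-r$) to replace the fractional term by its floor. The sanity checks at $i=0$, $i=a$, and $r=0$ are a nice addition but the argument is the same.
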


\begin{proof}
  Observe that a pair $(i,j)\in\NNN^2$ lies in the triangle spanned by $(a,0), (0,0)$ and $(0,am-r)$ if and only if $0\leq i\leq a$ and $aj+(am-r)i\leq a(am-r)$. The latter inequality can be reformulated into \[j\leq\frac{(a-i)(am-r)}{a}=(1-\frac{i}{a})(am-r)=m(a-i)-r+\frac{ir}{a}.\] As $j$ is an integer this is equivalent to $j\leq N(i)$ and the lemma follows.
\end{proof}

Recall from Section \ref{subsection:HirzebruchSurfaces} the embedding \begin{align*}
  \iota_m:\AAA^2&\hookrightarrow\FFF_m,\\
   (x,y)&\mapsto[x:1;y:1].
\end{align*}

\begin{definition}\label{definition:a-am-divisor}
  Let $F\in\CCC[x,y]$ be of bidegree $(a,am)$.
  An effective divisor $C\subset\FFF_m$ with $C\sim aS_+$ such that $C\mid_{\iota_m(\AAA^2)}$ corresponds to the zero set of $F$ in $\AAA^2$ will be called a $(a,am)$\textit{-divisor of} $F$.
\end{definition}

The following lemma shows that it exists uniquely, hence it will  be called \textit{the} $(a,am)$-divisor of $F$.

Before stating the lemma, we give a short overview of it:
Parts~\ref{item:PolynomialVsDivisor--PD} and~\ref{item:PolynomialVsDivisor--DP} show the correspondence of a polynomial $F$ of bidegree $(a,am)$ and an $(a,am)$-divisor $C$.
Then, the equivalence of~\ref{item:PolynomialVsDivisor--PolyType} and~\ref{item:PolynomialVsDivisor--Divisibility} translates the meaning of having bidegree $(a,am-r)$ into a condition on the equation of the zero set of $C$. This condition is then stated in a geometric manner in~\ref{item:PolynomialVsDivisor--A} and~\ref{item:PolynomialVsDivisor--B} for $r=1$ respectively $r=2$.

\begin{lemma}\label{lemma:PolynomialVsDivisor}
  Let $a\geq1$ and $m\geq1$ be two integers. \begin{enumerate}
    \item\label{item:PolynomialVsDivisor--PD} Let $F$ be a polynomial of bidegree $(a,am)$.
    Then, there is a unique divisor $C\subset\FFF_m$ which is an $(a,am)$-divisor of $F$.
    \item\label{item:PolynomialVsDivisor--DP} Let $C\subset\FFF_m$ be a divisor with $C\sim aS_+$. Then, there exists a polynomial $F$ (unique up to multiplication with a constant) of bidegree $(a,am)$ such that $C$ is its $(a,am)$-divisor.
    Moreover, if $C$ is irreducible, then so is $F$.
  \end{enumerate}
  If \ref{item:PolynomialVsDivisor--PD} and / or \ref{item:PolynomialVsDivisor--DP} hold, let $G=\sum_{i=0}^ax_0^i\,x_1^{a-i}\,G_{m(a-i)}(y_0,y_1)$ be a polynomial on $\FFF_m$ whose zero set is $C$, where the $G_{m(a-i)}$ are homogenenous of degree $m(a-i)$, and let $r$ be an integer with $0\leq r<a$.
  The following are equivalent: \begin{enumerate}[label=(\roman*)]
    \item\label{item:PolynomialVsDivisor--PolyType} $F$ is a polynomial of bidegree $(a,am-r)$,
    \item\label{item:PolynomialVsDivisor--Divisibility} $y_1^{r-\floor{\frac{ir}{a}}}$ divides $G_{m(a-i)}$ for all $i=0,\ldots,a$.
  \end{enumerate}
  Moreover, for small $r$ we have the following statements: \begin{enumerate}[label=(\Alph*)]
    \item\label{item:PolynomialVsDivisor--A} For $r=1$, \ref{item:PolynomialVsDivisor--PolyType} holds if and only if $I_p(y_1,C)=a$, or $y_1=0$ is a component of $C$,
    \item\label{item:PolynomialVsDivisor--B} for $r=2$ and $a= 3$, \ref{item:PolynomialVsDivisor--PolyType} holds if and only if \begin{enumerate}[label=(\alph*)]
      \item\label{item:PolynomialVsDivisor--a} $m_p(C)=3$, or
      \item\label{item:PolynomialVsDivisor--b}  $m_p(C)=2$ and $C$ has only one tangent direction at $p$, namely the one given by $y_1=0$,
    \end{enumerate}
  \end{enumerate}
  where $p=[0:1;1:0]$.
\end{lemma}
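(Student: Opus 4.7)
My plan is to start from an explicit homogenization/dehomogenization between bidegree-$(a,am)$ polynomials and divisors with $C\sim aS_+$, deduce the equivalence (i)$\Leftrightarrow$(ii) combinatorially from Lemma~\ref{lemma:TriangleToPolynomial}, and then translate both sides of (A) and (B) into vanishings of coefficients in a local chart at $p$. Concretely, given $F(x,y)=\sum_{i,j}a_{ij}x^iy^j$ of bidegree $(a,am)$, I would set
\[G(x_0,x_1,y_0,y_1)=\sum_{i,j}a_{ij}\,x_0^i\,x_1^{a-i}\,y_0^j\,y_1^{m(a-i)-j}.\]
A direct check against the scaling $(x_0,x_1,y_0,y_1)\mapsto(\mu x_0,\lambda^{-m}\mu x_1,\lambda y_0,\lambda y_1)$ shows that $G$ has the bihomogeneity of a section associated to $aS_+$, so its zero set is a divisor $C\sim aS_+$, and its dehomogenization is $G(x,1,y,1)=F(x,y)$. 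Uniqueness of $G$ (and hence of $C$) up to a scalar comes from the fact that two equations of the same effective divisor on a complete surface differ by an invertible regular function. Conversely, any $G$ defining a divisor $C\sim aS_+$ decomposes as $\sum_i x_0^i x_1^{a-i}G_{m(a-i)}(y_0,y_1)$ by bihomogeneity, and dehomogenizing yields an $F$ of bidegree $(a,am)$. For the irreducibility clause in~(2), any nontrivial factorization $F=F_1F_2$ produces two nonempty components of $C\cap\iota_m(\AAA^2)$, while any hypothetical component of $C$ contained entirely in $\FFF_m\setminus\iota_m(\AAA^2)$ would have to be $S_-$ or a fiber, neither of which is linearly equivalent to $aS_+$ for $a\geq1$.

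For (i)$\Leftrightarrow$(ii), the monomial $a_{ij}x^iy^j$ of $F$ corresponds to $a_{ij}x_0^ix_1^{a-i}y_0^jy_1^{m(a-i)-j}$ in $G$, so the $y_1$-exponent appearing in $G_{m(a-i)}$ is $m(a-i)-j$. Lemma~\ref{lemma:TriangleToPolynomial} says that $F$ has bidegree $(a,am-r)$ iff $a_{ij}=0$ whenever $j>N(i)=m(a-i)-r+\floor{ir/a}$, which is the same as requiring $m(a-i)-j\geq r-\floor{ir/a}$ on every nonzero monomial of $G_{m(a-i)}$. That is exactly divisibility of $G_{m(a-i)}$ by $y_1^{r-\floor{ir/a}}$.

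For (A) and (B) I would pass to local coordinates $u=x_0/x_1$ and $v=y_1/y_0$ at $p=[0:1;1:0]$, in which the local equation of $C$ becomes $\tilde{G}(u,v)=\sum_{i=0}^a u^iG_{m(a-i)}(1,v)=\sum_{i,k}c_{ik}u^iv^k$, where $c_{ik}$ is the coefficient of $y_0^{m(a-i)-k}y_1^k$ in $G_{m(a-i)}$; divisibility of $G_{m(a-i)}$ by $y_1^s$ is then the vanishing $c_{ik}=0$ for all $k<s$. In case (A) with $r=1$, condition~(ii) reads $c_{i0}=0$ for $i=0,\ldots,a-1$; geometrically, either $y_1\mid G$ (which forces $c_{i0}=0$ for every $i$) or $y_1\nmid G$, in which case $I_p(y_1,C)$ equals the order of vanishing of $\tilde{G}(u,0)=\sum_{i=0}^a c_{i0}u^i$ at $u=0$, and this equals $a$ precisely when $c_{00}=\cdots=c_{a-1,0}=0$. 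In case (B) with $r=2$ and $a=3$, condition~(ii) reads $c_{00}=c_{01}=c_{10}=c_{11}=c_{20}=0$; geometrically, $m_p(C)\geq2$ forces $c_{00}=c_{01}=c_{10}=0$, the tangent cone is then $c_{02}v^2+c_{11}uv+c_{20}u^2$, and its having $v=0$ (the direction $y_1=0$) as its unique tangent forces $c_{11}=c_{20}=0$, while $m_p(C)=3$ additionally makes $c_{02}=0$. The disjunction ``(a) or (b)'' is thus exactly the same five-fold vanishing. The only real obstacle is careful bookkeeping across the three coordinate systems---affine $(x,y)$, bihomogeneous $(x_0,x_1,y_0,y_1)$, and local $(u,v)$ at $p$---the content of the lemma being the clean reformulation of ``missing leading monomials of $F$'' as ``extra contact of $C$ with the fiber through $p$''.
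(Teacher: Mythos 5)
Your proposal follows essentially the same route as the paper's proof: the same explicit homogenization $G=\sum_i x_0^i x_1^{a-i}G_{m(a-i)}(y_0,y_1)$ with dehomogenization $G(x,1,y,1)=F$, the same reduction of (i)$\Leftrightarrow$(ii) to the combinatorics of Lemma~\ref{lemma:TriangleToPolynomial}, and the same local computation in the chart $[u:1;1:v]$ at $p$ for (A) and (B) (the paper phrases (A) via $I_{(0,0)}\left(y,G(x,1,1,y)\right)=I_{(0,0)}(y,x^a)=a$ and (B) via the terms of degree $\leq 2$ of $G(x,1,1,y)$, which is exactly your coefficient bookkeeping with the $c_{ik}$, including the observation $m_p(C)\leq C\cdot f=3$ needed to rule out multiplicity $>3$). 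The argument is correct; the one spot worth tightening is the uniqueness of $C$ in (1), where the paper's cleaner formulation is that $C-C'$ is supported on $\FFF_m\setminus\iota_m(\AAA^2)=S_-\cup f$ and is linearly equivalent to $0$, hence zero.
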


\begin{proof}
  We show \ref{item:PolynomialVsDivisor--PD}.
  Thanks to Lemma~\ref{lemma:TriangleToPolynomial} we can write $F=\sum_{i=0}^a x^i \sum_{j=0}^{m(a-i)}a_{ij}y^j$.
  We homogenize it to a polynomial $G$ of degree $(a,0)$ on $\FFF_m$ with $\iota_m$ and obtain \[G:=\sum_{i=0}^ax_0^i\, x_1^{a-i}\underbrace{\sum_{j=0}^{m(a-i)}a_{ij}\, y_0^j \, y_1^{m(a-i)-j}}_{=:G_{m(a-i)}}.\]
  As $x_0^a$ is also of degree $(a,0)$, we have $\frac{G}{x_0^a}\in k(\FFF_m)$ and so \begin{align*}
    \divv\left(\frac{G}{x_0^a}\right)&=\divv(G)-a\, \divv(x_0)\\
    &\sim \divv(G)-aS_+,
  \end{align*} and finally we set $C$ to be the effective divisor $C:=\divv(G)\sim aS_+$.

  Observe that $G(x,1,y,1)=F(x,y)$ and so $\iota_m$ is an isomorphism between the zero set of $F$ in $\AAA^2$ and $C\mid_{\iota_m(\AAA^2)}$, so $C$ is a $(a,am)$-divisor of $F$.

  To show the uniqueness of $C$, we assume that there is another effective divisor $C'\subset \FFF_m$ with $C'\sim aS_+$ and $C'\mid_{\iota_m(\AAA^2)}=C\mid_{\iota_m(\AAA^2)}$.
  So we have \[(C-C')\mid_{\iota_m(\AAA^2)}=0\] and because $\FFF_m\setminus\iota_m(\AAA^2)=S_-\cup f$ holds there are some $\alpha,\beta\geq0$ such that $C-C'=\alpha S_-+\beta f\sim0$, since $C\sim aS_+\sim C'$.
  Hence $\alpha=\beta=0$ and so $C=C'$, and \ref{item:PolynomialVsDivisor--PD} is proved.

  Let us prove \ref{item:PolynomialVsDivisor--DP}.
  As $C\sim aS_+$ there is a $g\in k(\FFF_m)$ with $\divv(g)=C-a\,S_+=C-a\,\divv(x_0)$.
  Hence there is a polynomial $G$ on $\FFF_m$ of degree $(a,0)$ with $g=\frac{G}{x_0^a}$.
  Hence $C=\divv(G)$ and $G$ is of the form $G=\sum_{i=0}^ax_0^i\,x_1^{a-i}\,G_{m(a-i)}(y_0,y_1)$.
  Let \[F(x,y):=G(x,1,y,1)=\sum_{i=0}^a x^i\,G_{m(a-i)}(y,1).\]
  We remark that if $C$ is irreducible, then $G$ is and hence also $F$ is irreducible.
  The zero set of $F$ corresponds to $C|_{\iota_m(\AAA^2)}$ with $\divv(F)=C|_{\iota_m(\AAA^2)}$, so $F$ is unique up to multiplication with a constant.
  As $G_{m(a-i)}(y,1)$ is of degree at most $m(a-i)$, the monomials $x^i\,y^j$ appearing in $F$ satisfy $j\leq m(a-i)$.
  By Lemma~\ref{lemma:TriangleToPolynomial}, the polynomial $F$ is of bidegree $(a,am)$ and $C$ is hence its $(a,am)$-divisor.
  This concludes the proof of~\ref{item:PolynomialVsDivisor--DP}.

  Let us show the equivalence of~\ref{item:PolynomialVsDivisor--PolyType} and~\ref{item:PolynomialVsDivisor--Divisibility}.
  Note that in~\ref{item:PolynomialVsDivisor--PD} and in~\ref{item:PolynomialVsDivisor--DP} we have \[F(x,y)=G(x,1,y,1)=\sum_{i=0}^ax^iG_{m(a-i)}(y,1).\]
  So if~\ref{item:PolynomialVsDivisor--PolyType} holds, that is if $F$ is of bidegree $(a,am-r)$, then by Lemma~\ref{lemma:TriangleToPolynomial}, the degree of $G_{m(a-i)}(y,1)$ is at most $N(i)$, where $N(i)=m(a-i)-r+\floor{\frac{ir}{a}}$ for all $i=0,\ldots,a$.
  As $G_{m(a-i)}$ is of degree $m(a-i)$, this implies that \[y_1^{m(a-i)-N(i)}=y_1^{r-\floor{\frac{ir}{a}}}\] needs to divide $G_{m(a-i)}(y_0,y_1)$, which is~\ref{item:PolynomialVsDivisor--Divisibility}.

  For the converse direction, we assume~\ref{item:PolynomialVsDivisor--Divisibility} and find $G_{m(a-i)}=0$ or $G_{m(a-i)}=y_1^{r-\floor{\frac{ir}{a}}}P_{N(i)}(y_0,y_1)$, where the $P_{N(i)}$ are homogeneous polynomials of degree $N(i)$.
  Hence, $F=\sum_{i=0}^ax^iP_{N(i)}(y,1)$, which is a polynomial of bidegree $(a,am-r)$ by Lemma~\ref{lemma:TriangleToPolynomial}.
  This is~\ref{item:PolynomialVsDivisor--PolyType}.

  It remains to prove the statements \ref{item:PolynomialVsDivisor--A} and \ref{item:PolynomialVsDivisor--B}. Let us start with \ref{item:PolynomialVsDivisor--A}.
  Note that for $r=1$ we have that $r-\floor{\frac{ir}{a}}$ is zero for $i=a$, and else it is~$1$.
  Hence, \ref{item:PolynomialVsDivisor--Divisibility} translates to $y_1\mid G_{m(a-i)}$ for $i=0,\ldots,a-1$.

  Assuming \ref{item:PolynomialVsDivisor--Divisibility}, $G$ can be written as \[y_1\sum_{i=0}^{a-1}x_0^ix_1^{a-i}H_{m(a-i)}(y_0,y_1)+G_0x_0^a,\] where $G_{m(a-i)}=y_1\,H_{m(a-i)}$.
  Therefore, if $G_0=0$, then $y_1$ divides $G$, and so $y_1=0$ is a component of $C$.
  If $G_0\neq 0$, then \[I_{[0:1;1:0]}(y_1,G)=I_{(0,0)}(y,G(x,1,1,y))=I_{(0,0)}(y,x^a)=a.\]
  Hence, we have shown that \ref{item:PolynomialVsDivisor--Divisibility} implies that $y_1=0$ is a component of $C$, or $I_p(y_1,C)=a$, which is the first part of \ref{item:PolynomialVsDivisor--A}.

  For the other direction of \ref{item:PolynomialVsDivisor--A}, note that if $y_1=0$ is a component of $C$, then we have directly that $G_0=0$ and that $y_1$ divides $G_{m(a-i)}$ for $i=0,\ldots, a-1$, implying \ref{item:PolynomialVsDivisor--Divisibility}.

  It remains to show that $I_p(y_1,C)=a$ implies \ref{item:PolynomialVsDivisor--Divisibility}, too.
  As $a=I_{(0,0)}(G(x,1,1,y),y)$, we find that $x^a\mid G(x,1,1,0)=\sum_{i=0}^ax^iG_{m(a-i)}(1,0)$.
  So we have $G_{m(a-i)}(1,0)=0$ for $i=0,\ldots,a-1$ and so $y_1\mid G_{m(a-i)}$, which is \ref{item:PolynomialVsDivisor--Divisibility}.
  Hence, \ref{item:PolynomialVsDivisor--A} is proved.

  Now, let us show \ref{item:PolynomialVsDivisor--B}.
  In one direction, we will show the more general statement ``\ref{item:PolynomialVsDivisor--Divisibility}$\implies$ $m_p(C)\geq3$ or \ref{item:PolynomialVsDivisor--b}'' for any $a\geq3$.
  For $a=3$, we have $m_p(C)\leq C\cdot f=3S_+\cdot f=3$, where $f$ is the fiber going through $p$, and thus we have \ref{item:PolynomialVsDivisor--a} or \ref{item:PolynomialVsDivisor--b}.

  Note that for $r=2$ and any $a\geq 3$ we have $\left(2-\floor{\frac{2i}{a}}\right)_{i=0}^2=(2,2,1)$.

  Assuming \ref{item:PolynomialVsDivisor--Divisibility} yields $y_1^2\mid G_{ma}$, $y_1^2\mid G_{m(a-1)}$ and $y_1\mid G_{m(a-2)}$.
  On the affine chart $\{[x:1;1:y]\mid (x,y)\in\AAA^2\}$ containing $p=[0:1;1:0]$ we can write $G$ as \[x^aG_0+x^{a-1}G_m(1,y)+\cdots+x^2G_{m(a-2)}(1,y)+xG_{m(a-1)}(1,y)+G_{ma}(1,y),\] and so it has no terms of degree $0$ and $1$.
  A term of degree 2 can only come from $G_{ma}(1,y)=\lambda y^2+\text{terms of higher degree}$. If $\lambda\neq0$ we have $m_p(G)=2$ and the only tangent direction of $G$ at $p$ comes from $y=0$.
  If $\lambda=0$ we have $m_p(G)\geq 3$.
  So we are either in case~\ref{item:PolynomialVsDivisor--a} or~\ref{item:PolynomialVsDivisor--b}.

  Let us now prove the converse direction.
  Assuming $a=3$, we consider \[G(x,1,1,y)=G_{3m}(1,y)+xG_{2m}(1,y)+ x^2G_m(1,y)+x^3G_0\] in both cases \ref{item:PolynomialVsDivisor--a} and \ref{item:PolynomialVsDivisor--b}:
  \begin{enumerate}
      \item[(a)]  If $m_p(C) = 3$ no terms of degree less than $3$ may appear in $G(x,1,1,y)$.
      So we have $y_1^3\mid G_{3m}$, $y_1^2\mid G_{2m}$ and $y_1\mid G_m$.
      This is even stronger than \ref{item:PolynomialVsDivisor--Divisibility}.
      \item[(b)] If $m_p(C)=2$ there may be no terms of degree less than $2$ in $G(x,1,1,y)$, and the only term of degree 2 is $y^2$ because $y_1=0$ is the only tangent direction of $C$ at $p$. So $y_1^2\mid G_{3m}$, $y_1^2\mid G_{2m}$ and $y_1\mid G_m$, implying \ref{item:PolynomialVsDivisor--Divisibility}.
    \end{enumerate}
\end{proof}

\begin{observation}
  Let $a,m\geq 1$ be two integers and let $F$ be a polynomial of bidegree $(a,am)$ with a singularity of type $A_k$ at $(0,0)$ for some integer $k\geq 1$.
  Then the $(a,am)$-divisor $C$ of $F$ has an $A_k$-singularity at $s=([0:1;0:1])$ (where $s$ stands for ``singular'').
  Figure~\ref{figure:PolynomialVsDivisor} depicts $C$ for $a=3$.

  Since $C\sim aS_+$, the divisor $C$ is an $a$-section if and only if $C$ does not contain any fibers.
\end{observation}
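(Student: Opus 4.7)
The plan is to deduce both claims directly from Lemma~\ref{lemma:PolynomialVsDivisor} together with the analytically local nature of the $A_k$-condition; no new geometric input is required, and the work lies only in making the affine-to-surface transport at the single point $s$ precise.

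First I would note that $\iota_m(0,0)=[0:1;0:1]=s$, so $s$ lies inside the open chart $\iota_m(\AAA^2)\subset\FFF_m$, on which $\iota_m$ is an isomorphism of smooth varieties and therefore, in particular, a local analytic isomorphism near $(0,0)$. Part~\ref{item:PolynomialVsDivisor--PD} of Lemma~\ref{lemma:PolynomialVsDivisor} provides that $C\mid_{\iota_m(\AAA^2)}$ corresponds, via $\iota_m$, to the zero set of $F$ in $\AAA^2$. Given local analytic coordinates near $(0,0)$ in which $F$ has the form $y^2-x^{k+1}=0$, I would push them forward through $\iota_m$ to obtain local analytic coordinates near $s$ in which $C$ is cut out by the same equation. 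By Definition~\ref{definition:SingularitiesOfTypeAk}, this is precisely an $A_k$-singularity at $s$.

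For the second claim I would invoke Definition~\ref{definition:aSection}: an effective divisor is an $a$-section exactly when it contains no fiber \emph{and} meets every fiber with multiplicity $a$. The linear equivalence $C\sim aS_+$ together with $S_+\cdot f=1$ for every fiber $f$ (since $S_+$ is a section) forces $C\cdot f=a$ automatically, so the sole remaining requirement is the absence of fiber components, yielding the stated equivalence.

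There is no real obstacle to overcome, since all the substantive content has already been packaged into Lemma~\ref{lemma:PolynomialVsDivisor}. The only subtlety worth calling out explicitly is that $\iota_m$ is an open immersion of smooth varieties and hence legitimately serves as an analytic chart around $s$, which is what allows the local normal form of $F$ at $(0,0)$ to transfer verbatim to a local normal form of $C$ at $s$.
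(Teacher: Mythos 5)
Your proposal is correct and matches the paper's (implicit) reasoning: the paper states this as an Observation without proof, treating it as immediate from Lemma~\ref{lemma:PolynomialVsDivisor},\ref{item:PolynomialVsDivisor--PD} together with the fact that $\iota_m$ is an isomorphism onto its image (so the local analytic normal form $y^2-x^{k+1}=0$ transfers from $(0,0)$ to $s$), and from $C\cdot f=aS_+\cdot f=a$ for the $a$-section claim. Your write-up simply makes these two steps explicit, with no gaps.
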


\begin{figure}[h]
\begin{center}

  \begin{tikzpicture}[xscale=0.5,yscale=0.3,baseline=(b.base)]
    \draw (0,-1) -- (0, 7);
    \draw (-3,0) -- (2, 0);
  \node (b) at (0,7){};
    \draw (-2,6) ..controls (-2,5.5) and (-1,5).. (0,5);
    \draw (0,5) ..controls (0.5,5) and (1.5,6).. (2,5);
    \draw (0,5) ..controls (0 ,5.3) and (2,4).. (2,5);
    \draw (0,5) ..controls (-2,4) and (-2,0).. (2,2);
    \draw (-2.2,0.5) node{\scriptsize{$S_-$}};
    \draw (0.3,0.5) node{\scriptsize{$f$}};

    \draw (-0.5,5.7) node{\scriptsize{$A_{k}$}};

  \end{tikzpicture}
  \begin{tikzpicture}[xscale=0.5,yscale=0.3,baseline=(b.base)]
    \draw (0,-1) -- (0, 7);
      \draw (-2,-1) -- (-2, 7);
    \draw (-3,0) -- (2, 0);
  \node (b) at (0,7){};
    \draw (-2,6) ..controls (-2,6.8) and (-1.7,6.9).. (-1.5,7);
    \draw (-2,6) ..controls (-2,5.5) and (-1,5).. (0,5);
    \draw (0,5) ..controls (0.5,5) and (1.5,6).. (2,5);
    \draw (0,5) ..controls (0 ,5.3) and (2,4).. (2,5);
    \draw (0,5) ..controls (-2,4) and (-2,0).. (2,2);
    \draw (-2.5,0.5) node{\scriptsize{$S_-$}};
    \draw (0.3,0.5) node{\scriptsize{$f$}};
      \draw (-2.4,4) node{\scriptsize{$f'$}};

    \draw (-0.5,5.7) node{\scriptsize{$A_{k}$}};

  \end{tikzpicture}
  \begin{tikzpicture}[xscale=0.5,yscale=0.3,baseline=(b.base)]
    \draw (0,-1) -- (0, 7);
    \draw (-4,0) -- (2, 0);
    \draw (-2.5,-1) -- (-2.5, 7);
  \node (b) at (0,7){};
    \draw (-2.5,6) ..controls (-2.8,5.5) and (-3.2,5).. (-3.5,5);
    \draw (-2.5,6) ..controls (-2,5.5) and (-1,5).. (0,5);
    \draw (0,5) ..controls (0.5,5) and (1.5,6).. (2,5);
    \draw (0,5) ..controls (0 ,5.3) and (2,4).. (2,5);
    \draw (0,5) ..controls (-2,4) and (-2,0).. (2,2);
    \draw (-2.9,0.5) node{\scriptsize{$S_-$}};
    \draw (0.3,0.5) node{\scriptsize{$f$}};
    \draw (-2.9,4) node{\scriptsize{$f'$}};
    \draw (-0.5,5.7) node{\scriptsize{$A_{k}$}};

  \end{tikzpicture}
\end{center}
\caption{Illustration of Lemma~\ref{lemma:PolynomialVsDivisor} with an $A_k$-singularity in the case $a=3$, $k$ odd, and $r=0,1,2$ (left to right).}
\label{figure:PolynomialVsDivisor}
\end{figure}
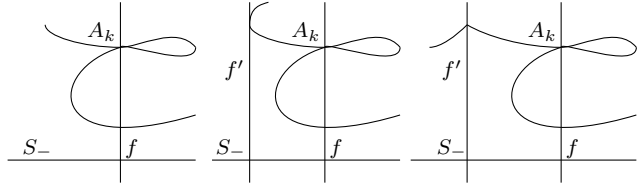

\begin{lemma}\label{lemma:TwoPointsCanBeChosenAsWished}
  Let $m\geq1$ be an integer and let $s$ and $t$ be two points on $\FFF_m$ that do not lie on the same fiber and that do not lie on $S_-$.
  Then, there exists an automorphism $\alpha\in\Aut(\FFF_m)$ such that $\alpha(s)=[0:1;0:1]$ and $\alpha(t)=[0:1;1:0]$.
\end{lemma}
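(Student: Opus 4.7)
My plan is to exploit the $\PPP^1$-bundle structure $\FFF_m \to \PPP^1$, $[x_0:x_1;y_0:y_1] \mapsto [y_0:y_1]$, and build $\alpha$ in two steps: first adjust the base coordinate $[y_0:y_1]$ so that $s$ and $t$ sit on the two ``standard'' fibers $\{y_0 = 0\}$ and $\{y_1 = 0\}$, then use a fiber-preserving automorphism to kill the remaining $x_0$-components.

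For the first step, the fibers through $s$ and $t$ are distinct by hypothesis, so they correspond to two distinct points of $\PPP^1$. Since $\PGL_2$ is $2$-transitive on $\PPP^1$, I can pick a M\"obius transformation $M$ that sends the fiber of $s$ to $[0:1]$ and the fiber of $t$ to $[1:0]$. This lifts to the automorphism $[x_0:x_1;y_0:y_1] \mapsto [x_0:x_1; M(y_0:y_1)]$ of $\FFF_m$ (well-definedness with respect to the equivalence relation is immediate since $M$ only touches the $y$-coordinates). After this step, $s = [a:b;0:1]$ and $t = [c:d;1:0]$ for some scalars. The assumption $s, t \notin S_- = \{x_1 = 0\}$ gives $b, d \neq 0$, so using the equivalence $((x_0, x_1),(y_0, y_1)) \sim ((\mu x_0, \lambda^{-m} \mu x_1),(\lambda y_0, \lambda y_1))$ I may normalize to $s = [a:1;0:1]$ and $t = [c:1;1:0]$.

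For the second step, I use the fact that since $m \geq 1$ there is a homogeneous polynomial $P(y_0,y_1)$ of degree $m$ with prescribed values at $[0:1]$ and $[1:0]$, for instance $P(y_0, y_1) = -c\, y_0^m - a\, y_1^m$, for which $P(0,1) = -a$ and $P(1,0) = -c$. I claim that
\[
\beta:\ [x_0:x_1;y_0:y_1] \ \longmapsto\ [x_0 + P(y_0,y_1)\, x_1 : x_1 ; y_0:y_1]
\]
defines an automorphism of $\FFF_m$. Compatibility with the equivalence relation is the only thing to check: replacing $(x_0,x_1,y_0,y_1)$ by $(\mu x_0, \lambda^{-m}\mu x_1, \lambda y_0, \lambda y_1)$ transforms the first coordinate into $\mu x_0 + P(\lambda y_0,\lambda y_1) \lambda^{-m}\mu x_1 = \mu\bigl(x_0 + P(y_0,y_1) x_1\bigr)$ by homogeneity of $P$, which is exactly the scaling of the image point by $(\mu,1)$. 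Evaluating gives $\beta(s) = [a + P(0,1):1;0:1] = [0:1;0:1]$ and $\beta(t) = [c + P(1,0):1;1:0] = [0:1;1:0]$.

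Taking $\alpha = \beta \circ (\text{lift of }M)$ finishes the proof. The only conceptual subtlety is the degree-$m$ homogeneity condition on $P$, which is precisely what makes these ``fiber translations'' legitimate automorphisms of $\FFF_m$; this is also why the hypothesis $m \geq 1$ enters (for $m = 0$ one would need a constant $P$ taking two different values at $[0:1]$ and $[1:0]$, which is impossible). Everything else is routine bookkeeping with the equivalence relation defining $\FFF_m$.
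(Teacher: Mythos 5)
Your proof is correct and follows essentially the same two-step strategy as the paper: first a $\GL_2$-action on the base coordinates to place the two fibers at $y_0=0$ and $y_1=0$, then the fiber-preserving shear $x_0\mapsto x_0+P(y_0,y_1)\,x_1$ with $P$ homogeneous of degree $m$ chosen to kill the remaining $x_0$-components (the paper uses $P=-(a'y_1^m+b'y_0^m)$, identical to yours up to notation). The extra checks you include on compatibility with the equivalence relation are correct and merely make explicit what the paper leaves implicit.
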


\begin{proof}
  Applying an automorphism of the form $[x_0:x_1;y_0:y_1]\to[x_0:x_1;ay_0+by_1:cy_0+dy_1]$ with $\left(\begin{smallmatrix}
    a & b\\
    c & d\\
  \end{smallmatrix}\right)\in\GL_2(\CCC)$ we can assume that the fiber of $s$ is $y_0=0$, and the fiber of $t$ is $y_1=0$.
  As both points do not lie on $S_-$, we get $s=[a':1;0:1]$ and $t=[b':1;1:0]$ for some $a',b'\in\CCC$.
  By applying the coordinate change $x_0\mapsto x_0-x_1(a'\,y_1^m+b'\,y_0^m)$ we obtain the result.
\end{proof}

\begin{example}\label{example:2SectionWithEvenbLowerBound}
  Consider $C=C_1+C_2\subset\FFF_m$, where $C_1$ is given by the zero set of $F=x_0-x_1(y_0^m+y_1^m)$ and $C_2$ by $G=x_0-x_1y_1^m$.
  Note that $C_1,C_2\sim S_-+mf$ are both sections in $\FFF_m$, so $C\sim 2S_+$ is a $2$-section.
  Let us see that $C$ has an $A_{2m-1}$-singularity at $s=([1:1;0:1])$.
  Then we can apply a change of coordinates that sends $s$ onto $([0:1;0:1])$, namely $x_0\mapsto x_0-x_1y_1^m$.
  The existence of such a divisor implies with Lemma~\ref{lemma:PolynomialVsDivisor},\ref{item:PolynomialVsDivisor--DP} the existence of a polynomial $F$ of bidegree $(2,2m)$ with a singularity of type $A_{2m-1}$ at $(0,0)$, and so $N(2,2m)\geq 2m-1$.

  By inserting the parametrisation of $C_2$ into $F$ we find \[F(x_1y_1^m,x_1,y_0,y_1)=x_1y_0^m,\] hence $C_1$ and $C_2$ intersect only at $([1:1;0:1])$ with $I_s(C_1,C_2)=C_1\cdot C_2=m$.
  Therefore, after $m$ blow-ups $C_1$ and $C_2$ separate and $C$ gets smooth.
  As in Corollary \ref{corollary:SeveralBlowUpOfAk}, it follows that $C$ has an $A_k$-singularity where $k=2m$ or $k=2m-1$.
  Recall that the $A_k$-singularity has to be odd (as in Remark \ref{remark:OddEvenSingularities}) since it is the intersection of two curves, hence $k=2m-1$ as claimed.
\end{example}

\begin{lemma}\label{lemma:GenusUpperBoundForPolynomials}
  Let $a,m,r$ be integers with $a,m\geq 1$ and $0\leq r<a$.
  Let $F$ be a polynomial of bidegree $(a,am-r)$ with a singularity of  type $A_{k}$ such that its $(a,am)$-divisor is irreducible.
  Then $k\leq (a-1)(am-2)$.
  Moreover, if $a\geq 3$ and $r=2$, then $k\leq (a-1)(am-2)-2$.
\end{lemma}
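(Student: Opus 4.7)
The plan is to bound $k$ via Lemma~\ref{lemma:GenusUpperBoundForIrreducibleDivisors} applied to the (irreducible) $(a,am)$-divisor $C\subset\FFF_m$ of $F$, which inherits an $A_k$-singularity from $F$. This reduces to computing the arithmetic genus $g(C)$ by adjunction on $\FFF_m$, and -- for the refined bound when $r=2$, $a\geq3$ -- extracting an extra genus contribution from a forced additional singularity of $C$ provided by Lemma~\ref{lemma:PolynomialVsDivisor}\ref{item:PolynomialVsDivisor--B}.

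First I would compute $g(C)$. Since $S_+\sim S_-+mf$ we have $C\sim aS_+\sim aS_-+amf$; combining this with the standard formula $K_{\FFF_m}\sim -2S_--(m+2)f$ and the intersection numbers $S_-^2=-m$, $S_-\cdot f=1$, $f^2=0$, a direct computation yields
\[
C\cdot(C+K_{\FFF_m})=-am(a-2)+a(am-m-2)+am(a-2)=a\bigl(m(a-1)-2\bigr).
\]
By the adjunction formula, $2g(C)=C\cdot(C+K_{\FFF_m})+2=(a-1)(am-2)$, and Lemma~\ref{lemma:GenusUpperBoundForIrreducibleDivisors} then gives $k\leq 2g(C)=(a-1)(am-2)$, which is the first bound.

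For the refined bound assume $a\geq3$ and $r=2$. After a translation of $\AAA^2$ (extended to an automorphism of $\FFF_m$) I may place the $A_k$-singularity of $C$ at $s=[0:1;0:1]$. By Lemma~\ref{lemma:PolynomialVsDivisor}\ref{item:PolynomialVsDivisor--B}, the point $p=[0:1;1:0]$ satisfies $m_p(C)\geq 2$; since $C$ is reduced (being irreducible with a reduced point at $s$), $p$ is a genuine singular point of $C$. Crucially, $s$ and $p$ lie on the distinct fibers $\{y_0=0\}$ and $\{y_1=0\}$, so $p$ is disjoint from the chain of infinitely near double points above $s$ obtained by iteratively blowing up the $A_k$-singularity (Corollary~\ref{corollary:SeveralBlowUpOfAk}). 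Hence in the standard inequality
\[
g(C)\geq\tfrac12\sum_q m_q(C)\bigl(m_q(C)-1\bigr),
\]
where the sum runs over all singular points of $C$ including infinitely near ones, $p$ contributes at least $\binom{m_p(C)}{2}\geq 1$ in addition to the $n=\ceil{k/2}$ contributions coming from the $A_k$-chain at $s$. Therefore $g(C)\geq n+1$, and so $k\leq 2n\leq 2g(C)-2=(a-1)(am-2)-2$.

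The only delicate point is ensuring that $p$ really contributes independently of $s$ in the genus sum; this is automatic because $s$ and $p$ lie on different fibers of the $\PPP^1$-bundle $\FFF_m\to\PPP^1$, so the resolution of the singularity at $s$ takes place entirely above one fiber and never meets $p$.
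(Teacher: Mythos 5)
Your proof is correct and follows essentially the same route as the paper's: adjunction on $\FFF_m$ gives $2g(C)=(a-1)(am-2)$, Lemma~\ref{lemma:GenusUpperBoundForIrreducibleDivisors} gives $k\leq 2g(C)$, and for $r=2$, $a\geq 3$ the additional singular point at $p=[0:1;1:0]$ forced by Lemma~\ref{lemma:PolynomialVsDivisor}\ref{item:PolynomialVsDivisor--B} improves this to $k\leq 2g(C)-2$. Your extra observation that $p$ lies on the fiber $y_1=0$, hence away from the chain of infinitely near double points over the $A_k$-singularity, merely makes explicit a step the paper leaves implicit.
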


\begin{proof}
  Let $C$ be the $(a,am)$-divisor of $F$.
  Hence we have $C\sim aS_+\subset\FFF_m$ irreducible and we can compute its arithmetic genus \begin{align*}
    g(C) &= \frac{1}{2}C\cdot (C+K_{\FFF_m})+1\\
    & = \frac{1}{2}aS_+\cdot\left((a-2)S_-+\left((a-1)m-2\right)f\right)+1\\
    & = \frac{1}{2}a\left((a-1)m-2\right)+1.
  \end{align*}
  Lemma~\ref{lemma:GenusUpperBoundForIrreducibleDivisors} yields \[k\leq 2g(C)=a\left((a-1)m-2\right)+2=(am-2)(a-1)\] and the first part of the lemma is proved.

  If $a\geq 3$ and $r=2$, by Lemma~\ref{lemma:PolynomialVsDivisor},\ref{item:PolynomialVsDivisor--PD} there is another singular point on $C$.
  So we have $k\leq 2g(C)-2$, which finishes the proof.
\end{proof}

\begin{lemma}\label{lemma:ReducibleDivisorWithAkSingularity}
  Let $C=C_1+\ldots+C_l$ be an effective divisor on a smooth surface with an $A_k$-singularity at a point $p\in C$, where $k\geq1$ and all $C_i$ are irreducible for $i=1,\ldots,l$.
  Then up to exchanging the order of the $C_i$'s, one of the following holds: \begin{enumerate}
    \item\label{item:ReducibleDivisorWithAkSingularity--1} $C_1$ has a singularity of type $A_k$ at $p$ and $p$ does not lie on any of the other $C_i$'s.
    \item\label{item:ReducibleDivisorWithAkSingularity--2} $p\in C_1\,\cap\, C_2$ is a smooth point of $C_1$ and $C_2$, $C_1\neq C_2$, that does not lie on any of the other $C_i$'s. Moreover, $k=2n-1$ where $n=I_p(C_1,C_2)$.
  \end{enumerate}
\end{lemma}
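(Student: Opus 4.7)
The plan is to exploit that an $A_k$-singularity has multiplicity $2$ and is analytically reduced at $p$ (since $y^2-x^{k+1}=0$ is reduced for $k\geq1$). The additivity $m_p(C)=\sum_i m_p(C_i)$ together with reducedness at $p$ forces that every $C_i$ passing through $p$ appears with coefficient~$1$, so one of precisely two configurations occurs: either (i) exactly one component, which we rename $C_1$, satisfies $m_p(C_1)=2$ and no other $C_j$ meets $p$, or (ii) exactly two distinct components, renamed $C_1$ and $C_2$, are both smooth at $p$ and no other $C_j$ meets $p$.

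In case (i), the germ of $C$ at $p$ coincides with the germ of $C_1$ at $p$ (the other components contribute nothing locally), so $C_1$ inherits the $A_k$-singularity at $p$ verbatim. This yields \ref{item:ReducibleDivisorWithAkSingularity--1}.

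For case (ii), I will induct on $n=I_p(C_1,C_2)$ to show $k=2n-1$. The base case $n=1$ means the two smooth branches meet transversally at $p$, so after blowing up $p$ with exceptional divisor $E$ the strict transforms $\tilde C_1,\tilde C_2$ hit $E$ at two \emph{distinct} points, i.e.\ $\tilde C\cap E$ consists of two points; by Lemma~\ref{lemma:OneBlowUpOrDownOfAk}\ref{item:OneBlowUpOrDownOfAk--I} this forces $k=1=2n-1$. For the inductive step, assume $n\geq 2$, so $C_1$ and $C_2$ share a common tangent at $p$. Blowing up $p$ makes $\tilde C_1$ and $\tilde C_2$ still smooth, and standard behaviour of intersection multiplicity under blow-up gives $I_{p'}(\tilde C_1,\tilde C_2)=n-1$ at the unique point $p'=\tilde C_1\cap\tilde C_2\in E$. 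Hence $\tilde C=\tilde C_1+\tilde C_2+(\text{strict transforms missing }p')$ again falls in case (ii) at $p'$, and Lemma~\ref{lemma:OneBlowUpOrDownOfAk}\ref{item:OneBlowUpOrDownOfAk--III} identifies its singularity as $A_{k-2}$ (with $k\geq 3$). The induction hypothesis then yields $k-2=2(n-1)-1$, i.e.\ $k=2n-1$.

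The main technical point — and the only place where care is needed — is verifying that after blow-up we remain in case (ii) rather than collapsing into case (i); this holds because $C_1\neq C_2$ implies $\tilde C_1\neq\tilde C_2$, and both remain smooth at $p'$ since smooth branches stay smooth under blow-up. The intersection multiplicity drop $I_p(C_1,C_2)=1+I_{p'}(\tilde C_1,\tilde C_2)$ for two smooth branches with a common tangent is the standard local computation used implicitly already in Lemma~\ref{lemma:OneBlowUpOrDownOfAk}, so no new tools are needed.
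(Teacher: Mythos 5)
Your proposal is correct and follows essentially the same route as the paper: the multiplicity count $2=m_p(C)=\sum_i m_p(C_i)$ yields the same dichotomy, and the identity $k=2n-1$ with $n=I_p(C_1,C_2)$ is obtained by tracking how blow-ups simultaneously decrease the singularity type by $2$ and the intersection multiplicity by $1$. The only difference is presentational: you run an explicit induction via Lemma~\ref{lemma:OneBlowUpOrDownOfAk}, whereas the paper argues that a reducible singularity forces $k$ odd and then identifies $n=\ceil{\frac{k}{2}}$ with the number of blow-ups separating the two branches via Corollary~\ref{corollary:SeveralBlowUpOfAk}.
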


\begin{proof}
  If $p$ only lies in one of the $C_i$, then \ref{item:ReducibleDivisorWithAkSingularity--1} holds.
  So let us assume that $p\in C_1\cap C_2$ is the $A_k$-singularity, which implies that $C_1$ and $C_2$ are distinct because of the reducedness of $C$ at the $A_k$-singularity.
  Hence, $2=m_p(C)=\sum_{i=1}^l m_p(C_i)$ and so $p$ is a smooth point in $C_1$ and $C_2$, and  $p$ does not lie on any of the other $C_i$'s.
  In this case, the singularity needs to  be reducible, so $k$ has to be odd.
  Hence there is an integer $n$ with $k=2n-1$, that is $n=\ceil{\frac{k}{2}}$, where $n=I_p(C_1,C_2)$ is the number of points that have to be blown up until $C_1$ and $C_2$ do not intersect anymore, as in Corollary~\ref{corollary:SeveralBlowUpOfAk}.
\end{proof}

We have all ingredients to find the value of $N(2,b)$ for any $b\geq2$.

\begin{lemma}\label{lemma:BoundsForThe2SectionCase}
  For each integer $b\geq 1$ we have $N(2,b)=b-1$.
\end{lemma}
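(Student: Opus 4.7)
The plan splits cleanly into a lower and an upper bound. The lower bound $N(2,b)\geq b-1$ is already on the shelf: when $b$ is odd, Example~\ref{example:a2LowerBound} exhibits $F=x^2-y^b$ of bidegree $(2,b)$ with an $A_{b-1}$-singularity, and when $b=2m$ is even, Example~\ref{example:2SectionWithEvenbLowerBound} produces the required two-section configuration on $\FFF_m$ with an $A_{2m-1}$-singularity at the intersection.

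For the upper bound, I would invoke Lemma~\ref{lemma:TriangleToPolynomial} to put any polynomial $F$ of bidegree $(2,b)$ in the normal form
\[
F = c_0\,x^2 + p(y)\,x + q(y),\qquad \deg p \leq \lfloor b/2\rfloor,\ \ \deg q\leq b,
\]
and then split on whether $c_0$ vanishes. If $c_0\neq 0$, the polynomial map $\phi\colon (x,y)\mapsto\bigl(x-\tfrac{p(y)}{2c_0},y\bigr)$ is an automorphism of $\AAA^2$ (hence an analytic isomorphism that preserves the $A_k$-type of every singular point) and transforms $F$ into $c_0\,x^2-\tfrac{\Delta(y)}{4c_0}$ with $\Delta(y)=p(y)^2-4c_0\,q(y)$. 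The curve is then a union of two smooth branches meeting at $(0,y_0)$ whenever $\Delta(y_0)=0$, and a direct local reading shows the singularity is of type $A_k$ with $k+1$ equal to the multiplicity of $y_0$ as a root of $\Delta$. Since $\deg\Delta\leq\max(2\deg p,\deg q)\leq b$, this yields $k\leq b-1$.

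If instead $c_0=0$, then $F=p(y)\,x+q(y)$ is linear in $x$. I would write $F=R(y)\,G(x,y)$ where $R=\gcd(p,q)$ and $G=\tilde p(y)\,x+\tilde q(y)$ with $\gcd(\tilde p,\tilde q)=1$. Reducedness of $F$ at its $A_k$-singularity forces $R$ to be squarefree, so the curve $\{F=0\}$ is the union of the smooth rational curve $\{G=0\}$ (the graph of $-\tilde q/\tilde p$) with finitely many distinct vertical lines $\{y=y_i\}$. At any intersection $(x_i,y_i)$ of such a vertical line with $\{G=0\}$, the value $G_x(x_i,y_i)=\tilde p(y_i)$ is nonzero (otherwise $\tilde p$ and $\tilde q$ would share the root $y_i$), so the tangent to $\{G=0\}$ there is non-vertical and the two branches meet transversally; every resulting singularity is therefore of type $A_1$, giving $k=1\leq b-1$ as soon as $b\geq 2$. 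The edge case $b=1$ is disposed of directly: then $p$ is a constant and no $A_k$-singularity with $k\geq 1$ can occur.

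The step I expect to demand the most care is the $c_0=0$ case, where one must correctly extract the vertical-line components, verify reducedness forces them to be simple, and check that their intersections with the residual rational curve are always transverse; the completion of the square in the $c_0\neq 0$ case is a routine Newton polygon estimate once the algebraic automorphism $\phi$ of $\AAA^2$ is recognised as preserving the $A_k$-type.
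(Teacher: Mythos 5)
Your proof is correct, but the upper bound is obtained by a genuinely different and more elementary route than the paper's. The paper passes to the $(2,2m)$-divisor $C\sim 2S_+$ on $\FFF_m$ via Lemma~\ref{lemma:PolynomialVsDivisor}, bounds $k$ by $2g(C)$ when $C$ is irreducible (Lemma~\ref{lemma:GenusUpperBoundForPolynomials}), and otherwise runs a case analysis on the divisor classes of the components via Lemma~\ref{lemma:ReducibleDivisorWithAkSingularity}; in particular, excluding $k=2m-1$ for odd $b=2m-1$ requires a separate argument showing that condition~\ref{item:PolynomialVsDivisor--A} of Lemma~\ref{lemma:PolynomialVsDivisor} fails. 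Your completion of the square replaces all of this with the single observation that after the triangular automorphism $\phi$ the curve is $x^2=c\,\Delta(y)$, so every singularity is an $A_{\mu-1}$ with $\mu$ the multiplicity of a root of $\Delta$, and $\deg\Delta\leq\max(2\lfloor b/2\rfloor,b)=b$ delivers $k\leq b-1$ uniformly in the parity of $b$ --- the odd case, which costs the paper extra work, comes for free here because $2\deg p\leq b-1$ when $b$ is odd. What the paper's approach buys is that it is the $a=2$ instance of the machinery (divisor classes on $\FFF_m$, genus bounds, cofiberedness) that carries the whole $a=3$ analysis, whereas your argument is specific to degree $2$ in $x$. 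Two cosmetic points: your phrase ``union of two smooth branches'' is only literally accurate when the root multiplicity $\mu$ is even (for odd $\mu\geq 3$ the $A_{\mu-1}$-point is a unibranch cusp), but the formula $k+1=\mu$ that you actually use is correct in all cases; and the degenerate subcases $\Delta\equiv 0$ and $p\equiv 0$ should be mentioned, though they are immediate since a non-reduced curve has no $A_k$-singularity and a reduced union of parallel lines is smooth.
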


\begin{proof}
  The lower bound has been studied in Example \ref{example:a2LowerBound} for $b$ odd and in Example \ref{example:2SectionWithEvenbLowerBound} for $b$ even.
  It remains to show that they are also an upper bound.

  Let $F$ be a polynomial of bidegree $(2,2m-r)$ with an $A_k$-singularity, where $r\in\{0,1\}$.
  Let $C\subset\FFF_m$ be its $(2,2m)$-divisor.
  If $C$ is irreducible, then by Lemma~\ref{lemma:GenusUpperBoundForPolynomials} we obtain $k\leq2m-2$.

  So let us assume that $C$ is reducible, hence we can write $C=C_1+\cdots+C_l$, where all $C_i$ are irreducible for $i=1,\ldots,l$.
  Recall that $C\sim 2\,S_+=2\,(S_-+m\,f)$.
  We can apply Lemma~\ref{lemma:ReducibleDivisorWithAkSingularity}.

  In case~\ref{item:ReducibleDivisorWithAkSingularity--1}, we can write $C_1\sim aS_-+bf$ with $0\leq a\leq2$ and $0\leq b\leq 2m$.
  If $a=0$ (respectively $a=1$), then $C_1$ is a fiber (respectively a section, since $C_1$ is irreducible and contains thusly no fibers) and therefore smooth.
  So let us assume that $a=2$.
  Then, $0\leq C_1\cdot S_-=-2m+b$ and hence $b\geq2m$ and so $b=2m$ and $C=C_1$ is irreducible, a contradiction.

  In case~\ref{item:ReducibleDivisorWithAkSingularity--2} let us write $C_1\sim aS_-+bf$ and $C_2\sim cS_-+df$ with $a+c\leq 2$ and $b+d\leq2m$.
  We may assume that $a\geq c$, and $a=1$ (since $a=2$ implies that $C=C_1$ as before, and $a=0$ implies that $C_1$ and $C_2$ are both fibers, hence they do not meet). \begin{itemize}
    \item If $c=0$, then $n=I_p(C_1,C_2)\leq C_1\cdot C_2=(S_-+bf)\cdot d\,f=1$ and so $k=2n-1\leq1$ with Lemma \ref{lemma:ReducibleDivisorWithAkSingularity}.
    \item If $c=1$, then $n=I_p(C_1,C_2)\leq C_1\cdot C_2=b+d-m\leq m$ and so $k\leq2m-1$ with Lemma \ref{lemma:ReducibleDivisorWithAkSingularity}.

    It remains to show that equality in the latter equation cannot happen for $r=1$.
    We have $k=2m-1$ only if $b=d=m$ (which means that $C_1,C_2\sim S_+$ and hence $C=C_1+C_2$) and if $n=C_1\cdot C_2$ (that is if $C_1$ and $C_2$ intersect at $p$ only).
    So for any point $q$ distinct from $p$ we have $I_q(C,f)=I_q(C_1,f)+I_q(C_2,f)\leq1$ since $C_1$ and $C_2$ are both sections.
    Hence, case~\ref{item:PolynomialVsDivisor--A} of Lemma~\ref{lemma:PolynomialVsDivisor},\ref{item:PolynomialVsDivisor--PD} cannot apply and so $F$ is not of bidegree $(2,2m-1)$.
  \end{itemize}
  This finishes the proof.
\end{proof}

From now on we delve into the study of $N(3,b)$. A first result shows that $A_k$-singularities of reducible polynomials are not interesting enough.

\begin{lemma}\label{lemma:ReducibleDivisorsNotInteresting}
  Let $m\geq 2$ be an integer and let $F\in\CCC[x,y]$ be a polynomial of bidegree $(3,3m-r)$, where $r\in\{0,1,2\}$.
  Then, its $(3,3m)$-divisor $C\subset\FFF_m$ is either irreducible or has at most a singularity of type $A_{4m-1-r}$.
\end{lemma}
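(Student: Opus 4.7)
The plan is to apply Lemma~\ref{lemma:ReducibleDivisorWithAkSingularity} to the reducible $(3,3m)$-divisor $C \sim 3S_+$ at its $A_k$-singularity $q$. Writing $C = \sum C_i$ with $C_i \sim a_iS_- + b_if$, one has $\sum a_i = 3$, $\sum b_i = 3m$, and $b_i \geq a_im$ for $C_i \neq S_-$. Components with $a_i \leq 1$ are smooth ($\PPP^1$'s), and $a_i = 3$ forces $C_i = C$, contradicting reducibility.

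In case~(1), the component $C_1$ through $q$ must have $a_1 = 2$, so $2m \leq b_1 \leq 3m$ and $g(C_1) = b_1 - m - 1$. Lemma~\ref{lemma:GenusUpperBoundForIrreducibleDivisors} gives $k \leq 2g(C_1) \leq 4m - 2$, already settling $r \in \{0,1\}$. For $r = 2$ I would rule out $b_1 = 3m$: then $C = C_1 + S_-$ and, since $p \notin S_-$, one has $m_p(C) = m_p(C_1) \leq C_1 \cdot f = 2$; the tangent-direction clause of Lemma~\ref{lemma:PolynomialVsDivisor}\ref{item:PolynomialVsDivisor--B} would force $f_p$ tangent to $C_1$ at $p$, contradicting $I_p(f_p, C_1) \leq C_1 \cdot f = 2 = m_p(C_1)$. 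Hence $b_1 \leq 3m - 1$ and $k \leq 4m - 4$.

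In case~(2), $q \in C_1 \cap C_2$ (smooth on each), $k = 2n - 1$, $n = I_q(C_1, C_2) \leq C_1 \cdot C_2$. Up to relabeling $a_1 \geq a_2$; subcases with $a_2 = 0$ give $C_1 \cdot C_2 \leq 2$, and $a_1 = 3$ is excluded. There remain $(a_1, a_2) = (2, 1)$, which forces $b_1 = 2m$, $b_2 = m$ with no further components and $C_1 \cdot C_2 = 2m$; and $(1, 1)$, with $b_1 + b_2 \leq 3m$, extras of class $S_- + (3m - b_1 - b_2)f$, and $C_1 \cdot C_2 = b_1 + b_2 - m \leq 2m$. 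Both give $k \leq 4m - 1$, handling $r = 0$.

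The crux, and the main obstacle, is to trim one or two further units for $r \geq 1$ using the bidegree condition at $p = [0:1;1:0] \neq q$ supplied by Lemma~\ref{lemma:PolynomialVsDivisor}\ref{item:PolynomialVsDivisor--A},\ref{item:PolynomialVsDivisor--B}. The key facts are that $S_-$ misses $p$, that sections ($C_i \cdot f = 1$) cannot be tangent to a fiber, and that a $2$-section ($C_i \cdot f = 2$) cannot be tangent to $f_p$ when $m_p(C_i) = 2$. For $r = 1$, $y_1 = 0$ is not a component of $C$ in the extremal subcases, so $I_p(f_p, C) = 3$ is forced; distributing this budget over components compels $p \in C_1 \cap C_2$, yielding $I_q(C_1, C_2) \leq C_1 \cdot C_2 - 1$ and $k \leq 4m - 3$. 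For $r = 2$, the tangent-$y_1$ alternative in~\ref{item:PolynomialVsDivisor--B} is obstructed by the same tangency constraint, so $m_p(C) = 3$ is forced, contributing $I_p(C_1, C_2) \geq 2$ and $k \leq 4m - 5$. The bookkeeping of the extra summand $S_- + cf$ in the $(1,1)$ subcase will require the most care, as one must check that neither the stray $S_-$ nor a stray fiber can absorb the tangent-direction requirement at $p$.
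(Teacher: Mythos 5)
Your proposal follows the paper's proof essentially step for step: the same dichotomy from Lemma~\ref{lemma:ReducibleDivisorWithAkSingularity}, the same genus bound $k\leq 2g(C_1)\leq 4m-2$ in case~(1), the same intersection bound $C_1\cdot C_2\leq 2m$ in case~(2), and the same use of \ref{item:PolynomialVsDivisor--A} and \ref{item:PolynomialVsDivisor--B} of Lemma~\ref{lemma:PolynomialVsDivisor} to shave off the extra units for $r\geq 1$; the pieces you defer do go through (the $(1,1)$ bookkeeping is harmless because the extremal case forces $C=C_1+C_2+S_-$ with no fiber components, and your direct $r=2$ treatment of case~(2) can be replaced, as in the paper, by noting that bidegree $(3,3m-2)$ implies bidegree $(3,3m-1)$ and that $k$ is odd there). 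The one assertion you should justify is $p=[0:1;1:0]\neq q$, since your step ``$I_q(C_1,C_2)\leq C_1\cdot C_2-1$'' loses its extra unit if the singularity sits at $[0:1;1:0]$; the fix is immediate (if $q=[0:1;1:0]$ then $I_q(C_1,C_2)\geq 2$ forces $C_1$ and $C_2$ to share the tangent of the section $C_2$, which is transverse to the fiber, so $I_q(C,f)=2\neq 3$ and \ref{item:PolynomialVsDivisor--A} fails anyway), and the paper's own proof silently makes the same assumption.
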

\begin{proof}
  By Lemma~\ref{lemma:PolynomialVsDivisor},\ref{item:PolynomialVsDivisor--PD}, we have $C\sim 3S_+$.
  Assume $C$ is reducible and write $C=C_1+\cdots+C_l$, where all $C_i$ for $i=1,\ldots,l$ are irreducible divisors.
  Assume that $C$ has a singularity of type $A_k$ at a point $p\in C$.
  Hence, either \ref{item:ReducibleDivisorWithAkSingularity--1} or \ref{item:ReducibleDivisorWithAkSingularity--2} of Lemma~\ref{lemma:ReducibleDivisorWithAkSingularity} holds.

  Let us first look at \ref{item:ReducibleDivisorWithAkSingularity--1}.
  Since $C_1$ is effective, we have $C_1\sim aS_-+bf$ with $0\leq a\leq 3$ and $0\leq b\leq 3m$.
  If $a=0$ (respectively $a=1$), $C_1$ is a fiber (respectively a section) and therefore smooth.

  If $a>1$, then $0\leq C_1\cdot S_-=-am+b$ and hence $b\geq am$.
  So $a=3$ is impossible, because that would give $b=3m$ and hence $C=C_1$ would be irreducible.
  The only remaining possibility is $a=2$ and therefore $C_1\sim 2S_-+bf$ with $2m\leq b\leq 3m$.
  Its arithmetic genus is \begin{align*}
    g(C_1)&=\frac{C_1\cdot(C_1 + K_{\FFF_m})}{2}+1\\
    &=\frac{C_1\cdot(b-m-2)f}{2}+1\\
    &=b-m-1\\
    &\leq 2m-1.
  \end{align*}
  By Lemma~\ref{lemma:GenusUpperBoundForIrreducibleDivisors}, $2g(C_1)\leq4m-2$ is an upper bound for $k$.
  It remains to show that we cannot have an $A_{4m-2}$-singularity if $r=2$.
  We find an $A_{4m-2}$-singularity at $p$ only if $g(C_1)=2m-1$ (which corresponds to $b=3m$) and $p$ is the only singular point of $C_1$ (as in Lemma~\ref{lemma:GenusUpperBoundForIrreducibleDivisors}).
  Hence, we get $C_1\sim 2S_-+3mf$ and so $C=C_1+S_-$.
  We want to see that this cannot occur for $r=2$ by finding a contradiction to \ref{item:PolynomialVsDivisor--B} of Lemma~\ref{lemma:PolynomialVsDivisor},\ref{item:PolynomialVsDivisor--PD}.
  Since $S_-$ is smooth, and $C_1$ contains no singular point except $p$, the divisor $C$ cannot have a point with multiplicity $\geq3$.
  A point with multiplicity $2$ besides $p$ is possible, but then one of its tangent directions is given by $S_-$, and this does not have the same tangent direction as a fiber.
  This contradicts \ref{item:PolynomialVsDivisor--B} of Lemma~\ref{lemma:PolynomialVsDivisor},\ref{item:PolynomialVsDivisor--PD}, and so $C$ cannot have a singularity of type $A_{4m-2}$ if $r=2$.

  We move on to \ref{item:ReducibleDivisorWithAkSingularity--2}: We will show that $C_1\cdot C_2\leq 2m$ and that equality $n=I_p(C_1,C_2)=2m$ cannot hold if $r=1$ (and thus if $r=2$).
  This implies with Lemma~\ref{lemma:ReducibleDivisorWithAkSingularity} that $k=2n-1\leq 4m-1$, and that $k=2n-1\leq 4m-2$ if $r=1$.
  However, since in case~\ref{item:ReducibleDivisorWithAkSingularity--2} only odd singularities are possible, we even have $k\leq 4m-3$ if $r=1$ (and thus if $r=2$).
  This achieves the proof.
  So we assume~\ref{item:ReducibleDivisorWithAkSingularity--2} and will prove the claims above.
  We write \begin{align*}
    C_1&\sim aS_-+bf\\
    C_2&\sim cS_-+df
  \end{align*} with $a+c\leq 3$ and $b+d\leq 3m$.
  We can assume $a\geq c$ and $a\leq2$ (because $a=3$ implies that $C=C_1$ is irreducible as before).
  If $c=0$, then $C_2$ is a fiber and since we assumed $m\geq 2$ we have $C_1\cdot C_2=a\leq 2<2m$.
  So we can assume $c=1$ (since $c\geq2$ is not possible because $a\geq c$ and $a+c\leq3$). Hence, $1\leq a\leq 2$.

  \begin{itemize}
    \item If $a=2$, then $0\leq C_1\cdot S_-=b-2m$ and so $b\geq 2m$.
    \begin{itemize}
      \item First, note that if $C_2=S_-$, then $C_1\cdot C_2=(2S_-+bf)\cdot S_-=-2m+b\leq m<2m$. (Note that the inequality is strict.)
      \item In the other case we have  $C_2\neq S_-$. Then, we have $0\leq      C_2\cdot S_-=-m+d$ and hence $b\geq 2m$ and $d\geq m$, which implies with $b+d\leq 3m$ that $b=2m$ and $d=m$.
      Hence $C_1\sim2S_+$, $C_2\sim S_+$ and so we have $C_1\cdot C_2=2m$ and $C=C_1+C_2$.
    \end{itemize}
    So only in the latter case the equality $C_1\cdot C_2=2m$ can occur. Assuming $n=I_p(C_1,C_2)=2m$, the point $p$ is the only point in the intersection of $C_1$ and $C_2$, and so for all points $q$ distinct from $p$ we have $I_q(C,f)=I_q(C_1,f)+I_q(C_2,f)\leq 2$.
    Therefore, case~\ref{item:PolynomialVsDivisor--A} in Lemma~\ref{lemma:PolynomialVsDivisor},\ref{item:PolynomialVsDivisor--PD} does not occur (since $C$ does not contain any fiber), and hence $r=0$.
    \item If $a=1$, then $C_1\cdot C_2=-m+d+b\leq 2m$.
    Assume in a first step that equality $C_1\cdot C_2=2m$ holds (later on we assume the stronger equality $n=2m$), which means that $b+d=3m$.
    Hence, $C=C_1+C_2+S_-$.
    If $b=0$ (or analogously, $d=0$), then $C_1=S_-$ and $C_2\sim S_-+3mf$.
    But now $C$ is not reduced at $p$, a contradiction to $C$ having an $A_k$-singularity at $p$.
    Hence, we have $b\neq 0$ and $d\neq0$. Hence, $C$ does not contain any fiber.

    If we assume $n=2m$, is it possible to have $r=1$?
    To achieve $n=C_1\cdot C_2$,the only point in the intersection of $C_1$ and $C_2$ is $p$, and so any point $q$ distinct from $p$ satisfies $I_q(C,f)=I_q(S_-,f)+I_q(C_1,f)+I_q(C_2,f)\leq 2$, since $C_i\cdot f=1$ for $i=1,2$.
    We conclude that case~\ref{item:PolynomialVsDivisor--A} in Lemma~\ref{lemma:PolynomialVsDivisor},\ref{item:PolynomialVsDivisor--PD} cannot occur.
  \end{itemize}
  We have showed that whenever $n=2m$ occurs, case~\ref{item:PolynomialVsDivisor--A} in Lemma~\ref{lemma:PolynomialVsDivisor},\ref{item:PolynomialVsDivisor--PD} is not satisfied. Therefore, $n=2m$ does not happen if $r=1$.
\end{proof}

\begin{corollary}\label{corollary:ReducibleDivisorsNotInteresting}
  Let $m\geq 2$ be an integer and let $F\in\CCC[x,y]$ be a polynomial of bidegree $(3,3m-r)$, where $r\in\{0,1,2\}$. Then, $F$ is irreducible, or has at most a singularity of type $A_{4m-1-r}$.
\end{corollary}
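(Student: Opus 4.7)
The plan is to reduce the corollary immediately to Lemma~\ref{lemma:ReducibleDivisorsNotInteresting} via the polynomial--divisor correspondence established in Section~\ref{section:PolynomialVsDivisor}. The corollary differs from the lemma only in that it talks about the polynomial $F$ rather than its $(3,3m)$-divisor $C$, so the content of the proof is essentially a translation between these two viewpoints.

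Concretely, I would argue as follows. Suppose $F$ is not irreducible; I must then show that every $A_k$-singularity of $F$ satisfies $k\leq 4m-1-r$. Let $p\in\AAA^2$ be a singular point of $F$ of type $A_k$. After translating in the $x$- and $y$-coordinates (which preserves the bidegree), I may assume $p=(0,0)$. Let $C\subset\FFF_m$ be the $(3,3m)$-divisor of $F$ supplied by Lemma~\ref{lemma:PolynomialVsDivisor}~\ref{item:PolynomialVsDivisor--PD}. By the observation following that lemma, $C$ has an $A_k$-singularity at $s=[0:1;0:1]$.

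It remains to verify that the reducibility of $F$ forces $C$ to be reducible, so that Lemma~\ref{lemma:ReducibleDivisorsNotInteresting} applies. This is exactly the contrapositive of the ``moreover'' clause in Lemma~\ref{lemma:PolynomialVsDivisor}~\ref{item:PolynomialVsDivisor--DP}: if $C$ were irreducible, then $F$ would be irreducible as well. With $C$ reducible, of bidegree class $3S_+$, admitting an $A_k$-singularity, and with $F$ of bidegree $(3,3m-r)$, Lemma~\ref{lemma:ReducibleDivisorsNotInteresting} yields $k\leq 4m-1-r$, as required.

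There is no real obstacle in this argument; all the combinatorial work has already been done in Lemma~\ref{lemma:ReducibleDivisorsNotInteresting}. The only point worth double-checking is that an $A_k$-singularity of $F$ at an arbitrary point of $\AAA^2$ yields an $A_k$-singularity of $C$ at a point that lies in the affine chart $\iota_m(\AAA^2)$, which is immediate from the definition of an $A_k$-singularity (it is an analytic local condition) together with the fact that $\iota_m$ is an open immersion whose image misses only $S_-\cup\{y_0=0\}\cup\{y_1=0\}$ and can be arranged by the translations above to contain the singular point.
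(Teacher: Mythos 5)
Your proposal is correct and follows essentially the same route as the paper: pass to the $(3,3m)$-divisor $C$, use the ``moreover'' clause of Lemma~\ref{lemma:PolynomialVsDivisor}~\ref{item:PolynomialVsDivisor--DP} to relate irreducibility of $C$ and $F$, and invoke Lemma~\ref{lemma:ReducibleDivisorsNotInteresting} in the reducible case. The only difference is cosmetic: you case-split on reducibility of $F$ and take the contrapositive, while the paper case-splits directly on $C$; your extra remarks about translating the singular point are harmless but not needed, since the lemma already bounds all singularities of $C$.
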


\begin{proof}
  Let $C$ be the $(3,3m)$-divisor of $F$.
  If $C$ is irreducible, then so is $F$, by~\ref{item:PolynomialVsDivisor--DP} of Lemma~\ref{lemma:PolynomialVsDivisor}.
  If $C$ is reducible, then $C$ and thus also $F$ has at most a singularity of type $A_{4m-1-r}$ by Lemma~\ref{lemma:ReducibleDivisorsNotInteresting}.
\end{proof}

\begin{remark}\label{remark:UpperBoundsReducibleIrreducible}
  Let $m\geq2$ and $r\in\{0,1,2\}$ be two integers and let $F$ be a polynomial of bidegree $(3,3m-r)$ with an $A_k$-singularity.
  If its $(3,3m)$-divisor is irreducible, we obtain an upper bound from Lemma~\ref{lemma:GenusUpperBoundForPolynomials}, if it is reducible we obtain one from Lemma~\ref{lemma:ReducibleDivisorsNotInteresting}:\begin{center}
    \begin{tabular}{r|| >{$}c<{$} | >{$}c<{$} | >{$}c<{$}| >{$}c<{$} |}
      & N(3,3m) & N(3,3m-1) & N(3,3m-2) & \text{asymptotically}\\ \hline
      $m\geq 2$, reducible & 4m-1 & 4m-2 & 4m-3 & \sim 4m \\ \hline
      $m\geq 1$, irreducible & 6m-4 & 6m-4 & 6m-6 & \sim 6m \\ \hline
    \end{tabular}
  \end{center}

  This gives us the following upper bounds (UB) in the cases $b=3m-r=3,\ldots,12$:
  \begin{center}
    \begin{tabular}{ r || >{$}c<{$} | >{$}c<{$} | >{$}c<{$}| >{$}c<{$}| >{$}c<{$}| >{$}c<{$}| >{$}c<{$} | >{$}c<{$} | >{$}c<{$} |>{$}c<{$} | }
      $b$ &  3 & 4 & 5 & 6 & 7 & 8 & 9 & 10 & 11 & 12\\ \hline\hline
      UB irreducible & 2 & 6 & 8 & 8 & 12 & 14 & 14 & 18 & 20 & 20\\ \hline
      UB reducible & 3 & 5 & 6 & 7 & 9 & 10 & 11 & 13 & 14 & 15\\ \hline
    \end{tabular}
  \end{center}
\end{remark}

\section{To Be ...}\label{section:ToBe}

In this section the goal is to give a lower bound for $N(3,b)$ where $b\leq 12$, namely the existence of a polynomial of bidegree $(3,b)$ with a certain singularity of type $A_k$ is shown.

In what follows we will not give the specific equation of a polynomial, but rather prove that a polynomial with certain properties exists.

In Section~\ref{section:ToBe--Recipe} we introduce our method: It is a ``recipe'' that ``cooks up'' polynomials with large singularities.
However, a recipe alone is not enough -- only ingredients make it useful.
We introduce these in Section~\ref{section:ToBe--Ingredients} and then use our recipe to prepare polynomials with large singularities.

\subsection{The recipe}\label{section:ToBe--Recipe}

We start by introducing some definitions that simplify the statements that follow.
Now is a good time to go back to take a look at Figures~\ref{figure:3-sectionSituation} and~\ref{figure:PolynomialVsDivisor}.

\begin{definition}\label{definition:TransversalPoint}
  Let $m\geq0$ be an integer and let $C\subset\FFF_m$ be an effective divisor and $p\in\FFF_m$ a point.
  We say that $p$ is a \textit{transversal point of $C$} (or that $C$ is \textit{transversal at $p$}) if $C$ intersects the fiber $f$ containing $p$ transversally, that is $I_p(C,f)=1$.
\end{definition}

We are interested in a configuration of curves with a certain behaviour on a fiber, described in the following definition.

\begin{definition}\label{definition:aConfiguration}
  Let $m\geq0$ be an integer, let $p$ and $s$ be two points in $\FFF_m$ and let $C$ and $S$ be two divisors on $\FFF_m$. We say that the configuration $(C,S,s,p)_m$ is an \textit{$a$-configuration} if the following hold: \begin{itemize}
    \item $C$ is an $a$-section,
    \item $S$ is a section,
    \item $p$ is a transversal point of $C$,
    \item $s\in C$,
    \item $p$ and $s$ are $C$-cofibered.
  \end{itemize}
\end{definition}

Sometimes, we will be interested only in a part of the configuration. In this case, we denote by $\bullet$ the parts we do not know about.
For instance, if we say that $(C,\bullet,\bullet,p)_m$ is an $a$-configuration, we just mean that $C$ is an $a$-section and that it contains a transversal point $p$. The existence of the rest of the configuration is \textbf{not} required.

\begin{definition}\label{definition:DisjointTangent}
  We say that an $a$-configuration $(C,S,\bullet,\bullet)_m$ is \textit{disjoint} if the intersection of $C$ and $S$ is empty.
  We say that the $a$-configuration $(C,S,\bullet,p)_m$ is \textit{tangent} if $C\cap S=\{p\}$, and we say that the $a$-configuration $(C,S,\bullet,\bullet)_m$ is \textit{tangent} if there exists a point $p$ such that $(C,S,\bullet,p)_m$ is tangent.
\end{definition}

For example, the situation in Figure~\ref{figure:PolynomialVsDivisor} depicts a disjoint $3$-configuration.
We will focus on $3$-configurations that are tangent or disjoint.

\begin{definition}\label{definition:TypeOfA3Configuration}
  Let $k\geq -1$ be an integer.
  Let $\mathcal{C}=(C,\bullet,s,p)_m$ be a $3$-configuration and let $f$ be the fiber meeting $s$ and $p$.
  We say that $\mathcal C$ is \textit{of type $-1$, $0$ or $k\geq 1$} in the following cases:
 \begin{enumerate}[label=(\Roman*)]
    \item\label{I}  If $C\cap f=\{p,s,t\}$ for a point $t$ distinct from $p$ and $s$, $\mathcal{C}$ is of type $-1$,
    \item\label{II} if $C\cap f=\{p,s\}$, and $f$ and $C$ are tangent at $s$, then $\mathcal{C}$ is of type $0$,
    \item\label{III} if $C\cap f=\{p,s\}$, and $s$ is an $A_k$-singularity of $C$ for some $k\geq 1$, then $\mathcal{C}$ is of type $k\geq 1$.
  \end{enumerate}
  We say that $(C,\bullet,s,\bullet)_m$ (respectively $(C,\bullet,\bullet,p)_m$) is of type $k$, if there is a point $p$ (respectively a point $s$) such that $(C,\bullet,s,p)_m$ is a $3$-configuration of type $k$.
\end{definition}
Note that $s\in C$ is a smooth point if $\mathcal{C}$ is of type $\leq0$, and $s\in C$ is an $A_k$-singularity if $\mathcal{C}$ is of type $k\geq 1$.

For example, we can rephrase Lemma~\ref{lemma:3-SectionSituation} with our new notions: Let $C$ be a $3$-section on $\FFF_m$ that has an $A_k$-singularity at a point $s\in C$ for some $k\geq3$. Then, $(C,\bullet,s,\bullet)_m$ is a $3$-configuration (and it is of type $k\geq 3$).

We now show that any $3$-configuration $(C,\bullet,\bullet,p)_m$ is of type $k$ for some $k\geq -1$.

\begin{lemma}\label{lemma:SituationOfA3SectionOnTheFiber}
  Let $m\geq0$ be an integer.
  Any $3$-configuration $(C,\bullet,\bullet,p)_m$ is of type $k$ for some $k\geq -1$.
\end{lemma}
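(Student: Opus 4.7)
The strategy is to analyse the intersection $C \cap f$, where $f$ is the fiber containing $p$. Because $C$ is a $3$-section it does not contain $f$ as a component, so $C\cap f$ is a finite scheme with
\[
\sum_{q\in C\cap f} I_q(C,f) \;=\; C\cdot f \;=\; 3.
\]
Transversality of $C$ at $p$ gives $I_p(C,f)=1$, so the residual intersection off $p$ satisfies $\sum_{q\in(C\cap f)\setminus\{p\}} I_q(C,f) = 2$.

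There are only two ways to distribute this residual $2$ over the points of $f$, and each matches one of the three possible types. If the residual splits as $1+1$, then $(C\cap f)\setminus\{p\}$ consists of two distinct points each meeting $f$ transversally; taking $s$ to be either of them, $(C,\bullet,s,p)_m$ falls into case~\ref{I} of Definition~\ref{definition:TypeOfA3Configuration}, so it is of type $-1$. If instead all of the residual intersection is concentrated at a single further point $s$, then $I_s(C,f)=2$, and the standard inequality
\[
I_s(C,f) \;\geq\; m_s(C)\,m_s(f) \;=\; m_s(C)
\]
(valid since $C$ and $f$ share no component and $f$ is smooth at $s$) forces $m_s(C)\in\{1,2\}$. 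If $m_s(C)=1$ then $s$ is smooth on $C$ and $I_s(C,f)=2$ means that $f$ is the tangent line to $C$ at $s$, placing the configuration in case~\ref{II}, type $0$. If $m_s(C)=2$, then Lemma~\ref{lemma:Multiplicity2ImpliesAk} identifies $s$ as an $A_k$-singularity of $C$ for some $k\geq1$, which is case~\ref{III}, type $k\geq 1$.

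I expect the only genuinely delicate point to be the hypothesis of Lemma~\ref{lemma:Multiplicity2ImpliesAk}, which requires $C$ to be reduced at $s$. A priori a $3$-section is only forbidden from containing a fiber, so a decomposition such as $C=2D+E$ with a section $D$ through $s$ is permitted and gives a non-reduced point with $m_s(C)=I_s(C,f)=2$. In the setting of this lemma this can be dealt with by working with $C$ reduced at $s$, which is consistent with the convention implicit in Definition~\ref{definition:SingularitiesOfTypeAk} (where an $A_k$-singularity is defined via a reduced local equation $y^2-x^{k+1}=0$); a non-reduced double-section contribution at $s$ is analysed separately and falls into the tangent picture of case~\ref{II}. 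Once this bookkeeping is settled, the three scenarios are mutually exclusive and together exhaust all possibilities, so $(C,\bullet,\bullet,p)_m$ is of type $k$ for some $k\geq -1$.
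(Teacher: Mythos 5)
Your proof is correct and follows essentially the same route as the paper: split $C\cdot f=3$ into the contribution $1$ at $p$ plus a residual $2$, and distinguish the cases of two further transversal points (type $-1$), one smooth tangent point (type $0$), and one point of multiplicity $2$, the last identified as an $A_k$-singularity via Lemma~\ref{lemma:Multiplicity2ImpliesAk} (the paper cites Lemma~\ref{lemma:OneBlowUpOrDownOfAk}, which rests on the same fact). The reducedness caveat you raise is genuine but is equally elided in the paper's own proof, which justifies the application only by noting that $C$ contains no fibers; since in all of the paper's uses $C$ is reduced at $s$, your extra bookkeeping is a welcome point of care rather than a divergence in method.
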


\begin{proof}
  We show that there exists a point $s$ such that $(C,\bullet,s,p)_m$ is of type $k$ for some $k\geq -1$.
  As $p$ is a transversal point of $C$ we have $I_p(C,f)=1$, where $f$ is the fiber meeting $p$. This yields \[3=C\cdot f=\sum_{q\in\FFF_m} I_q(C,f)=1+\sum_{q\in\FFF_{m}\setminus\{p\}}I_q(C,f),\] which means that there are either another two distinct points lying on both curves and we are in case~\ref{I}, or there is only one point $s\neq p$ that lies on both $C$ and $f$ and has $I_s(C,f)=2$.
  Now, there are two possibilities: The point $s\in C$ is smooth and we are in case~\ref{II}, or $s$ is singular with $m_s(C)=2$ and we are in case~\ref{III},
  because then $s$ is an $A_k$-singularity of $C$ for some $k\geq 1$ by Lemma~\ref{lemma:OneBlowUpOrDownOfAk}, which we can apply since $C$ does not contain any fibers.
\end{proof}

On a $3$-configuration $(C,S,s,p)_m$ we will perform two kinds of links: a $p$-link or an $s$-link.
In this section we focus on $p$-links, in Section~\ref{section:OrNotToBe} we will study $s$-links.

\begin{lemma}\label{lemma:aSectionIsaSection}
  Let $a\geq1$ and $m\geq0$ be two integers. Let $p\in\FFF_m$ be any point, and let $D$ be a divisor on $\FFF_m$.
  Consider a $p$-link $\pi:\FFF_m\dashrightarrow\FFF_{m'}$.
  Let $D':=\pi_*(D)$. Then, $D$ is an $a$-section if and only if $D'$ is an $a$-section.
  If this holds, then $D$ is irreducible if and only if $D'$ is irreducible.
\end{lemma}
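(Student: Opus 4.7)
The plan is to factor the $p$-link as $\pi=\tau\circ\sigma^{-1}$, where $\sigma\colon Y\to\FFF_m$ is the blow-up at $p$ with exceptional divisor $E$, and $\tau\colon Y\to\FFF_{m'}$ contracts the strict transform $\tilde f$ of the fiber $f$ through $p$ to the inverse point $s\in\FFF_{m'}$. Since the inverse of a $p$-link is an $s$-link, it suffices to prove the implications from $D$ to $D'$; the converses follow by applying the same argument to $\pi^{-1}$.

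Suppose $D$ is an $a$-section, so $D$ is effective, contains no fiber, and $D\cdot f'=a$ for every fiber $f'$ of $\FFF_m$. Let $\tilde D=\sigma^{-1}_*(D)$, so $D'=\tau_*(\tilde D)$ is effective. Away from $f$ and $\tilde f$ the map $\pi$ is an isomorphism that respects the $\PPP^1$-bundle structures, so it carries each fiber $f'\neq f$ of $\FFF_m$ isomorphically onto the corresponding fiber $f''$ of $\FFF_{m'}$. Hence for general $f''$ one has $D'\cdot f''=\tilde D\cdot\tilde{f'}=D\cdot f'=a$, and by constancy of the intersection number across the family of fibers this equality extends to every fiber of $\FFF_{m'}$.

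It remains to see that $D'$ contains no fiber. A fiber of $\FFF_{m'}$ is either the fiber through $s$, equal to $\tau(E)$, or the $\tau$-image of the strict transform $\tilde{f'}$ of some fiber $f'\neq f$ of $\FFF_m$. If $D'$ contained $\tau(E)$, then $E$ would be a component of $\tilde D$, contradicting that $\tilde D$ is the strict transform; if $D'$ contained $\tau(\tilde{f'})$, then $\tilde{f'}\subseteq\tilde D$, hence $f'\subseteq D$, contradicting that $D$ has no fiber components. Thus $D'$ is an $a$-section.

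For the irreducibility statement, if $D$ is an irreducible $a$-section then $D\neq f$, so $\tilde D$ is irreducible and different from $\tilde f$; since $\tau$ is a birational morphism that contracts only $\tilde f$, the pushforward $D'=\tau_*(\tilde D)$ remains irreducible. Applying this to $\pi^{-1}$ gives the converse. The main obstacle is just bookkeeping: keeping track of what happens to $E$ and $\tilde f$ under the two halves of the factorization, and confirming that no irreducible component is accidentally contracted or created. Once the factorization through $Y$ is in place, the rest is formal from the fact that both $\sigma$ and $\tau$ respect the $\PPP^1$-bundle structure over $\PPP^1$.
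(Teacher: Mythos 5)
Your proof is correct and follows essentially the same route as the paper's: the paper's argument is just a terser version of your observation that $\pi$ carries general fibers to general fibers (whence the intersection number with a fiber is preserved) and that neither $D$ nor $D'$ can acquire a fiber component. Your explicit factorization through the blow-up $Y$ and the check that $E$ and $\tilde f$ cause no trouble merely make the paper's one-line argument precise.
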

\begin{proof}
  Let $s'\in\FFF_{m'}$ be the inverse point of $\pi$.
    Since $\pi$ sends a general fiber of $\FFF_m$ (that is one that does not contain $p$) onto a general fiber of $\FFF_{m'}$ (that is one that does not contain $s'$), the push-forward $D'$ is an $a$-section if and only if $D$ is an $a$-section.
    If this holds, neither $D$ nor $D'$ contains a fibre, so $D$ is irreducible if and only if $D'$ is irreducible.
\end{proof}

\begin{lemma}\label{lemma:PropertiesaSectionAfterpLink}
  Let $a\geq1$ and $m\geq0$ be two integers. Let $(D,\bullet,\bullet,p)_m$ be an $a$-configuration.
  Let $\pi:\FFF_m\dashrightarrow\FFF_{m'}$ be a $p$-link with inverse point $s'$.
  Let $D':=\pi_*(D)$. The following statements hold: \begin{enumerate}[label=(\alph*)]
    \item\label{item:PropertiesaSectionAfterpLink--SelfIntersection} $D'^2=D^2+a^2-2a$,
    \item\label{item:PropertiesaSectionAfterpLink--mult} $m_{s'}(D')=a-1$,
    \item\label{item:PropertiesaSectionAfterpLink--point} there exists a (unique) cofibered point $p'\in D'$ with $s'$. Furthermore, $(D',\bullet,\bullet,p')_{m'}$ is an $a$-configuration.
  \end{enumerate}
  In particular, if $a\geq 2$ then $(D',\bullet,s',p')_{m'}$ is an $a$-configuration.
\end{lemma}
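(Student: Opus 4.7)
The plan is to factor the $p$-link as $\pi = \sigma \circ \epsilon^{-1}$, where $\epsilon : Y \to \FFF_m$ is the blow-up at $p$ (with exceptional divisor $E$) and $\sigma : Y \to \FFF_{m'}$ contracts the strict transform $\tilde f$ of the fiber $f$ through $p$ to $s'$. The transversality of $p$ on $D$ says $m_p(D) = 1$ and $I_p(D,f) = 1$, which immediately gives $\tilde D \cdot E = 1$, $\tilde D^2 = D^2 - 1$, $\tilde D \cdot \tilde f = D \cdot f - I_p(D,f) = a - 1$, and, crucially, that the single point $q := \tilde D \cap E$ is distinct from $\tilde f \cap E$ (the tangent direction of $D$ at $p$ differs from that of $f$).

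For (b), I would view $\sigma$ dually as the blow-up of $\FFF_{m'}$ at $s'$ with exceptional divisor $\tilde f$. Since $D$ is an $a$-section it contains no fiber, so $\tilde D$ is the strict transform of $D'$ and $\sigma^* D' = \tilde D + m_{s'}(D')\,\tilde f$. Intersecting with $\tilde f$ and using $\sigma^* D' \cdot \tilde f = 0$ (projection formula, since $\sigma_* \tilde f = 0$) gives $m_{s'}(D') = \tilde D \cdot \tilde f = a - 1$. For (a), the projection formula again yields $(D')^2 = (\sigma^* D')^2 = (\tilde D + (a-1)\tilde f)^2$; expanding and plugging in the intersection numbers above together with $\tilde f^2 = -1$ gives $D^2 + a^2 - 2a$.

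For (c), let $f' := \sigma(E)$ denote the fiber through $s'$ in $\FFF_{m'}$. Since $q \neq \tilde f \cap E$, the map $\sigma$ is a local isomorphism at $q$, so $p' := \sigma(q)$ lies on $f'$, is distinct from $s'$, and satisfies $I_{p'}(D', f') = I_q(\tilde D, E) = 1$. By Lemma~\ref{lemma:aSectionIsaSection} the curve $D'$ is an $a$-section, so $D' \cdot f' = a$; combined with $I_{p'}(D',f') = 1$ and the identity $\sigma^* f' = E + \tilde f$ (which follows from $(f')^2 = 0$ and $(\sigma^*f')^2 = 0$), every intersection point of $D'$ with $f'$ lifts to $\tilde D \cap (E \cup \tilde f)$, giving $I_{s'}(D', f') = a - 1$ and the uniqueness of the cofibered point $p'$. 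The transversality of $p'$ then makes $(D', \bullet, \bullet, p')_{m'}$ an $a$-configuration, and if $a \geq 2$ we get $m_{s'}(D') = a - 1 \geq 1$, so $s' \in D'$ and the configuration is even $(D', \bullet, s', p')_{m'}$. The main delicate point I anticipate is the bookkeeping in (c): identifying $p' = \sigma(q)$ correctly, confirming $p' \neq s'$ from transversality, and then using $\sigma^* f' = E + \tilde f$ to translate $D' \cap f'$ into $\tilde D \cap (E \cup \tilde f)$ so as to pin down both $I_{s'}(D',f')$ and the uniqueness of $p'$.
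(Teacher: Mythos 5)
Your proposal is correct and follows essentially the same route as the paper: decompose the $p$-link into the blow-up at $p$ followed by the contraction of $\tilde f$, read off $\tilde D^2=D^2-1$ and $\tilde D\cdot\tilde f=a-1$ from transversality, and then deduce (a)--(c) from the geometry of the contraction, including the same counting argument $a=D'\cdot f'\geq I_{p'}(D',f')+m_{s'}(D')=a$ for the uniqueness of $p'$. Your use of $\sigma^*D'=\tilde D+m_{s'}(D')\tilde f$ and the projection formula is just a more formal phrasing of the paper's direct computation $D'^2=\tilde D^2+(a-1)^2$, so there is no substantive difference.
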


\begin{proof}
  Let $\rho:X\to \FFF_m$ be the blow up centered at $p$ so we can assume that $\sigma=\pi\circ\rho:X\to \FFF_{m'}$ is the contraction of the strict transform of $f$ onto the point $s'\in\FFF_{m'}$.
  Note that $p\in D$ is a smooth point because $I_p(D,f)=1$.
  So the self-intersection of the strict-transform of $D$ is $\tilde D^2=D^2-1$.
  Having $I_p(D,f)=1$ also implies \[\tilde D\cdot\tilde f=\sum_{q\in\FFF_m\setminus\{p\}}I_q(D,f)=a-1.\]
  Therefore, by contracting $\tilde f$ to the point $s'$, we find $m_{s'}(D')=a-1$, and \[D'^2=\tilde D^2+(a-1)^2=D^2+a^2-2a.\] Hence, we have shown~\ref{item:PropertiesaSectionAfterpLink--SelfIntersection} and~\ref{item:PropertiesaSectionAfterpLink--mult}.
  To prove~\ref{item:PropertiesaSectionAfterpLink--point}, recall that $D$ and $f$ intersect transversally at $p$, hence $\tilde D$ and $\tilde f$ do not meet on $E$, the strict transform of $\rho$.
  Moreover, $\tilde D$ intersects $E$ transversally at a point $\hat p\in X$ (not lying on the strict transform $\tilde f$). Therefore, $\sigma_*(D)=D'$ and $\sigma_*(E)=f'$ intersect transversally at $p'=\sigma(\hat p)$, which is a point lying on the same fiber $f'$ as $s'$.
  Hence, $p'$ is a transversal point of $D'$ and so $(D',\bullet,\bullet,p')_{m'}$ is an $a$-configuration.
  Moreover, if $a\geq 2$ we have that $s'\in D'$ and so $(D',\bullet,s',p')_{m'}$ is an $a$-configuration.
  For the uniqueness of $p'$ recall that by Lemma~\ref{lemma:aSectionIsaSection}, $D'$ is an $a$-section, and hence by~\ref{item:PropertiesaSectionAfterpLink--mult}, we have \[a=D'\cdot f'=\sum_{q\in\FFF_{m'}}I_q(D',f')\geq I_{p'}(D',f')+m_{s'}(D')=a\] and so $D'\cap f'=\{p',s'\}$.
  This concludes the proof.
\end{proof}

Therefore, it makes sense to define the direct image of an $a$-configuration.

\begin{definition}\label{definition:DirectImageOfAnAConfigurationAfterPLink}
  Let $a\geq 1$ and let $m\geq 0$.
  Let $\mathcal{C}=(C,\bullet,\bullet,p)_m$ be an $a$-configuration and let $\pi:\FFF_{m}\dashrightarrow\FFF_{m'}$ be a $p$-link with inverse point $s'$. We define the direct image $\pi_*(\mathcal{C})$ to be the $a$-configuration $(\pi_*(C),\bullet, s',p')_{m'}$ as obtained in Lemma~\ref{lemma:PropertiesaSectionAfterpLink} (respectively $(\pi_*(C),\bullet,\bullet,p')_{m'}$ if $a=1$).
  In the same way we define the direct image of an $a$-configuration $(C,S,\bullet,p)_m$ under $\pi$ to be the $a$-configuration $(\pi_*(C),\pi_*(S),s',p')_{m'}$.
\end{definition}

This leads us to a chain of $p$-links.

\begin{definition}\label{definition:transversalChainOfnLinks}
  Let $m_0=m\geq0$, $a\geq1$ and $n\geq 1$ be three integers.
  Let $\mathcal{C}=\mathcal{C}_0$ be an $a$-configuration $(C_0=C,\bullet,\bullet,p_0=p)_{m}$.
  For $i=1,\ldots,n$, let $\pi_i:\FFF_{m_{i-1}}\dashrightarrow\FFF_{m_i}$ be a $p_{i-1}$-link with inverse point $s_i$.
  Let $\mathcal{C}_i=(C_i,\bullet,s_i,p_i)_{m_i}$ be the direct image of $\mathcal{C}_{i-1}$.
  We call the composition \[\pi=\pi_n\circ\cdots\circ\pi_1:\FFF_{m_0}\dashrightarrow\FFF_{m_n}\] a \textit{transversal $\mathcal{C}$-chain of $n$ links} (because we perform a series of links obtained from blowing up the transversal point of $C$).
  We say that $s_n\subset\FFF_{m_n}$ is the \textit{inverse point} of $\pi$ and that the \textit{direct image of $\mathcal{C}$ under $\pi$} is $\mathcal{C}_n$.
\end{definition}

\begin{remark}
  A transversal $\mathcal{C}$-chain of $n$ links is unique up to isomorphism at the target, since each of the $\pi_i$ is unique up to isomorphism.
  The points $p_i$ and $s_i$ being cofibered, $\pi$ restricted to $\FFF_{m_0}\setminus f_0$ is an isomorphism to $\FFF_{m_n}\setminus f_n$, where $f_0$ (respectively $f_n$) is the fiber containing $p_0$ (respectively $p_n$).
\end{remark}

\begin{lemma}\label{lemma:3SectionAfterChainOfnLinks}
  Let $m\geq0$ and $n\geq 1$ be two integers.
  Let $\mathcal{C}=(C,\bullet,\bullet, p)_m$ be a $3$-configuration of type $k\geq -1$.
  Let $\pi:\FFF_m\dashrightarrow\FFF_{m'}$ be a transversal $\mathcal{C}$-chain of $n$ links with inverse point $s'$.
  Then the $3$-configuration $\mathcal{C}'=\pi_*(\mathcal{C})$ is of type $K=2n+k\geq 1$.
\end{lemma}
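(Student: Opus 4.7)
The plan is to proceed by induction on $n$, with the base case $n=1$ carrying the geometric content. For the base case, I decompose the $p$-link $\pi: \FFF_m \dashrightarrow \FFF_{m'}$ as $\pi = \sigma \circ \rho^{-1}$, where $\rho: X \to \FFF_m$ is the blow-up at $p$ (with exceptional divisor $E$) and $\sigma: X \to \FFF_{m'}$ contracts the strict transform $\tilde{f}$ of the fiber through $p$ to the inverse point $s' \in \FFF_{m'}$. Since $p$ is transversal to $C$, so $I_p(C,f)=1$ and $m_p(C)=1$, the strict transform $\tilde{C}$ meets $E$ transversally at one point $\hat p$ (which $\sigma$ sends to the new transversal point $p'$), and $\tilde{C} \cap \tilde{f} = (C \cap f) \setminus \{p\}$ as schemes.

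I then split into three cases according to the type of $\mathcal{C}$. If $\mathcal{C}$ is of type $-1$, then $\tilde{C} \cap \tilde{f}$ consists of two distinct transversal points; if type $0$, then $\tilde{C} \cap \tilde{f}$ is a single smooth point of $\tilde{C}$ (with intersection multiplicity $2$ inherited from the tangency of $C$ and $f$ at $s$); and if type $k \geq 1$, then $\tilde{C} \cap \tilde{f} = \{s\}$ is an $A_k$-singularity of $\tilde{C}$ (reducedness passing through $\rho$ since $\rho$ is an isomorphism at $s$). Now I reverse the contraction $\sigma$: viewing $\rho^{-1} \circ \sigma^{-1}$ from $\FFF_{m'}$, blowing up $s'$ yields $X$ with exceptional divisor precisely $\tilde{f}$. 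Lemma~\ref{lemma:OneBlowUpOrDownOfAk} applied to this blow-up tells us that $s'$ is respectively an $A_1$, $A_2$ or $A_{k+2}$ singularity of $C' := \pi_*(C)$, via parts~\ref{item:OneBlowUpOrDownOfAk--I}, \ref{item:OneBlowUpOrDownOfAk--II} and \ref{item:OneBlowUpOrDownOfAk--III}. In all three cases the singularity is of type $A_{k+2}$, and since by Lemma~\ref{lemma:PropertiesaSectionAfterpLink} we have $C' \cdot f' = 3$, $m_{s'}(C')=2$ and $p'$ the unique cofibered point, we conclude $C' \cap f' = \{s',p'\}$, so $\mathcal{C}' = \pi_*(\mathcal{C})$ is precisely of type $k+2 \geq 1$.

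For the induction step, given a transversal $\mathcal{C}$-chain $\pi = \pi_{n+1} \circ \cdots \circ \pi_1$ with $n+1 \geq 2$ links, set $\mathcal{C}_1 := \pi_{1,*}(\mathcal{C})$. The base case shows $\mathcal{C}_1$ is of type $k + 2 \geq 1$, hence in particular of some type $\geq -1$. By construction, $\pi_{n+1} \circ \cdots \circ \pi_2$ is then a transversal $\mathcal{C}_1$-chain of $n$ links, so the induction hypothesis gives final type $(k+2) + 2n = 2(n+1) + k = K$, as required.

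The main obstacle is purely in the base case: correctly identifying, for each of the three initial configurations of $C \cap f$, what local picture is produced at $s'$ after contracting $\tilde f$, and matching this with the trichotomy in Lemma~\ref{lemma:OneBlowUpOrDownOfAk}. Once one recognises that the contraction $\sigma$ is literally the inverse of a blow-up at $s'$ whose exceptional divisor is $\tilde f$, the entire base case collapses into a direct application of that lemma, with the three types $-1, 0, k \geq 1$ of $\mathcal{C}$ corresponding one-to-one to the three cases \ref{item:OneBlowUpOrDownOfAk--I}, \ref{item:OneBlowUpOrDownOfAk--II}, \ref{item:OneBlowUpOrDownOfAk--III} of the lemma.
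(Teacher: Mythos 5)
Your proof is correct and follows essentially the same route as the paper: reduce to $n=1$ by induction, factor the link as blow-up at $p$ followed by contraction of $\tilde f$ onto $s'$, and apply the trichotomy of Lemma~\ref{lemma:OneBlowUpOrDownOfAk} to the blow-up of $s'$ with exceptional divisor $\tilde f$. The only (immaterial) difference is directional: the paper first uses $m_{s'}(C')=2$ to get an $A_K$-singularity with $K\geq 1$ and then reads off the type of $\mathcal{C}$ from $K$, whereas you read the same biconditionals in the forward direction starting from the type of $\mathcal{C}$.
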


\begin{proof}
  By induction, it is enough to prove the statement for $n=1$, which we do now.
  Recall that $\rho:X\to \FFF_m$ is the blow up centered at $p$ and $\sigma=\pi\circ\rho:X\to \FFF_{m'}$ is the contraction of the strict transform of $f$ onto the point $s'\in\FFF_{m'}$.
  The exceptional divisor of $\sigma$, denoted by $E$, equals $\tilde f$, and the strict transform of $C$ with respect to $\rho$ equals the strict transform of $C'$ with respect to $\sigma$.
  So we have $\tilde C'\cap E=\tilde C\cap \tilde f$.

  By~\ref{item:PropertiesaSectionAfterpLink--mult} of Lemma~\ref{lemma:PropertiesaSectionAfterpLink}, we know that $m_{s'}(C')=2$, so part~\ref{item:OneBlowUpOrDownOfAk--equivalence} of Lemma~\ref{lemma:OneBlowUpOrDownOfAk} is satisfied (note that $C'$ is a $3$-section and is therefore reduced) and $C'$ has a singularity of type $A_K$ at $s'$ for some $K\geq 1$.
  Hence, part~\ref{item:OneBlowUpOrDownOfAk--IToIII} can be applied onto $C'$ (with respect to $\sigma$). In all cases we find that $k=K-2$.
  \begin{itemize}
    \item[\ref{item:OneBlowUpOrDownOfAk--I}] If $K=1$, then $\tilde C\cap \tilde f$ contains two distinct points, say $\hat s, \hat t$.
    Hence, $C\cap f$ also contains the distinct points $s=\rho(\hat s)$ and $t=\rho(\hat t)$, but it also contains $p$, so $\mathcal{C}$ is of type $-1$.
    \item[\ref{item:OneBlowUpOrDownOfAk--II}] If $K=2$, then $\tilde C\cap \tilde f=\{\hat s\}$, where $\hat s\in\tilde C$ is smooth and $E=\tilde f$ and $\tilde C'=\tilde C$ are tangent at $\hat s$.
    Hence, $C\cap f=\{p,s\}$, where $s=\rho(\hat s)$ and $C$ and $f$ are tangent at $s$, concluding that $\mathcal{C}$ is of type $0$.
    \item[\ref{item:OneBlowUpOrDownOfAk--III}] If $K\geq 3$, then $\tilde C\cap \tilde f=\{\hat s\}$, where $\hat s\in \tilde C$ is a singular point of type $A_{K-2}=A_k$.
    Therefore, $C\cap f=\{p,s\}$, where $s=\rho(\hat s)$ and $C$ has a singularity of type $A_{k}$ at $s$ with $k=K-2\geq1$. Hence, $\mathcal{C}$ is of type $k=K-2$.
  \end{itemize}
  By Lemma~\ref{lemma:SituationOfA3SectionOnTheFiber} we know that $\mathcal{C}$ is of type $k\geq -1$, and so the dichotomy proves the ``if and only if''-statement.
\end{proof}

The following pictures illustrate the situation for $k=-1$, $k=0$, and $k\geq1$:
\begin{center}
  $k=-1$:
  \begin{tikzpicture}[xscale=0.5,yscale=0.3,baseline=(b.base)]
 \draw (0,-2) -- (0, 7);
 \draw (-2,7) ..controls (-1.5,6.4) and (-0.5,6.1).. (0,6);
 \draw (0,6) ..controls (1.5,5.9) and (1.5,5).. (0,4);
 \draw (0,4) ..controls (-2,2) and (-2,0).. (2,3);

 \draw (-2,0.5) ..controls (-0.5,2) and (1.7,2.1).. (2,2.5);

 \draw (-2.2,0.5) node{\scriptsize{$S$}};
 \draw (-2,6) node{\scriptsize{$C$}};
 \draw (0.3,-1.5) node{\scriptsize{$f$}};
 \draw (0.3,1.3) node{\scriptsize{$p$}};
 \draw (0.3,4.6) node{\scriptsize{$s$}};
 \draw (0.3,6.4) node{\scriptsize{$t$}};

     \draw (4.5,3) node{$\stackrel{\pi}\dashrightarrow$};
 \begin{scope}[xshift=230]
   \draw (0,-2) -- (0, 7);
   \node (b) at (0,6){};
   \draw (-2,7) ..controls (-2,6) and (-1,5).. (0,5);
   \draw (0,5) ..controls +(2,0) and +(1, 2).. (0,5);
   \draw (0,5) ..controls (-2,2) and (-2,0).. (2,3);

   \draw (-2,0) ..controls (0,2.5) and (1,2).. (2,1);

   \draw (-2.2,0.5) node{\scriptsize{$S'$}};
   \draw (-2,5.5) node{\scriptsize{$C'$}};
   \draw (0.3,-1.5) node{\scriptsize{$f'$}};
   \draw (0.3,1.3) node{\scriptsize{$p'$}};
   \draw (0.3,4.5) node{\scriptsize{$s'$}};
   \draw (-0.4,5.7) node{\scriptsize{$A_1$}};
 \end{scope}
 \end{tikzpicture}

$k=0$:
 \begin{tikzpicture}[xscale=0.5,yscale=0.3,baseline=(b.base)]
 \draw (1.1,-2) -- (1.1, 7);
 \draw (-2,7) ..controls (-1.5,6.4) and (-0.5,6.1).. (0,6);
 \draw (0,6) ..controls (1.5,5.9) and (1.5,5).. (0,4);
 \draw (0,4) ..controls (-2,2) and (-2,0).. (2,2);

 \draw (-2,-0.3) ..controls (-0.2,1.5) and (2,1.8).. (2,1.7);
 \draw (-1.7,-0.5) node{\scriptsize{$S$}};
 \draw (-1.3,5.7) node{\scriptsize{$C$}};
 \draw (1.3,-1.5) node{\scriptsize{$f$}};
 \draw (1.3,1.1) node{\scriptsize{$p$}};
 \draw (1.5,5.3) node{\scriptsize{$s$}};
 \draw (0.5,5.3) node{\scriptsize{$A_0$}};

     \draw (4.5,3) node{$\stackrel{\pi}\dashrightarrow$};
 \begin{scope}[xshift=230]
   \draw (0,-2) -- (0, 7);
   \node (b) at (0,6){};
   \draw (-2,7) ..controls (-2,6) and (-1,5).. (0,5);
   \
   \draw (0,5) ..controls (-2,2) and (-2,0).. (2,3);

   \draw (-2,0) ..controls (0,2.5) and (1,2).. (2,1);

   \draw (-2.2,0.5) node{\scriptsize{$S'$}};
   \draw (-2,5.5) node{\scriptsize{$C'$}};
   \draw (0.3,-1.5) node{\scriptsize{$f'$}};
   \draw (0.3,1.3) node{\scriptsize{$p'$}};
   \draw (0.3,4.5) node{\scriptsize{$s'$}};
   \draw (-0.4,5.7) node{\scriptsize{$A_2$}};
 \end{scope}
 \end{tikzpicture}

$k\geq1$:
 \begin{tikzpicture}[xscale=0.5,yscale=0.3,baseline=(b.base)]
 \draw (0,-2) -- (0, 7);
 \node (b) at (0,6){};
 \draw (-2,7) ..controls (-2,6) and (-1,5).. (0,5);
 \draw (0,5) ..controls +(2,0) and +(1, 2).. (0,5);
 \draw (0,5) ..controls (-2,2) and (-2,0).. (2,3);

 \draw (-2,0.5) ..controls (-0.5,2) and (1.7,2.1).. (2,2.5);

 \draw (4.5,3) node{$\stackrel{\pi}\dashrightarrow$};
 \draw (-2.2,0.5) node{\scriptsize{$S$}};
 \draw (-2,5.5) node{\scriptsize{$C$}};
 \draw (0.3,-1.5) node{\scriptsize{$f$}};
 \draw (0.3,1.3) node{\scriptsize{$p$}};
 \draw (0.3,4.5) node{\scriptsize{$s$}};
 \draw (-0.4,5.7) node{\scriptsize{$A_k$}};
 \begin{scope}[xshift=230]
   \draw (0,-2) -- (0, 7);

   \draw (-2,6) ..controls (-2,5.5) and (-1,5).. (0,5);
   \draw (0,5) ..controls (0.5,5) and (1.5,6).. (2,5);
   \draw (0,5) ..controls (0 ,5.3) and (2,4).. (2,5);
   \draw (0,5) ..controls (-2,4) and (-2,0).. (2,3);
   \draw (-2,0) ..controls (-0.1,2.7) and (0.8,2.2).. (2,1);
   \draw (-2.2,0.5) node{\scriptsize{$S'$}};
   \draw (-2,5) node{\scriptsize{$C'$}};
   \draw (0.3,-1.5) node{\scriptsize{$f'$}};
   \draw (0.3,1.3) node{\scriptsize{$p'$}};
   \draw (0.3,4.5) node{\scriptsize{$s'$}};
   \draw (-0.7,5.7) node{\scriptsize{$A_{k+2}$}};
 \end{scope}
 \end{tikzpicture}
\end{center}

\begin{lemma}\label{lemma:LocalIntersectionAfterTransversalChain}
  Let $m\geq 0$ and $a\geq1$ be two integers, and let $\mathcal{C}=(C,S,\bullet,p)_m$ be a tangent $a$-configuration.
  Let $1\leq n\leq I_p(S,C)$ be an integer.
  Let $\pi:\FFF_{m}\dashrightarrow\FFF_{m'}$ be a transversal $\mathcal{C}$-chain of $n$ links, and let $\mathcal{C}'=(C',S',\bullet,p')_{m'}$ be the direct image $\pi_*(\mathcal{C})$. The $a$-configuration $\mathcal{C}'$ is tangent or disjoint, and $I_{p'}(S',C')=I_p(S,C)-n$.
  In particular, it is tangent if and only if $I_{p'}(C',S')\geq 1$.
\end{lemma}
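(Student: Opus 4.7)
The plan is to induct on $n$, reducing everything to a single link. Let me set $n=1$ first and describe precisely what happens on the blown-up surface. Write $\rho:X\to\FFF_m$ for the blow-up at $p$ with exceptional divisor $E$, and let $\sigma=\pi\circ\rho:X\to\FFF_{m'}$ be the contraction of the strict transform $\tilde f$ to the inverse point $s'\in\FFF_{m'}$. Since $p$ is a transversal point of $C$, we have $I_p(C,f)=1$, so $p$ is smooth on $C$ and the tangent direction of $C$ at $p$ differs from that of $f$. Because $S$ is a section it is smooth everywhere, and $I_p(S,f)=S\cdot f=1$ forces the tangent of $S$ at $p$ to differ from that of $f$ as well (note $p\in S$ since $p\in C\cap S$). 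Therefore $\tilde C\cap E=\{\hat p\}$ and $\tilde S\cap E=\{\hat q\}$ with both points distinct from $\tilde f\cap E$. In particular, $\sigma$ is a local isomorphism at each of $\hat p,\hat q$, and $\tilde S\cap\tilde f=\emptyset$, so $s'\notin S'$.

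Next I account for the intersection of $C'$ and $S'$. Since $\pi$ restricts to an isomorphism $\FFF_m\setminus f\simeq\FFF_{m'}\setminus f'$ and $C\cap S=\{p\}$ by the tangent hypothesis, the only candidate points of $C'\cap S'$ are $p'$ and $s'$; we have just ruled out $s'$, so $C'\cap S'\subseteq\{p'\}$. To compute $I_{p'}(C',S')$, I use the standard blow-up formula for two curves smooth at the blown-up point:
\[
I_p(C,S)\;=\;m_p(C)\,m_p(S)+\sum_{q\in E}I_q(\tilde C,\tilde S)\;=\;1+\sum_{q\in E}I_q(\tilde C,\tilde S),
\]
where the sum has at most one nonzero term since $C,S$ share at most one tangent direction. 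If $I_p(C,S)=1$, the tangent directions of $C$ and $S$ at $p$ differ, so $\hat p\neq\hat q$ and $\tilde C\cap\tilde S\cap E=\emptyset$; hence $C'\cap S'=\emptyset$ and the configuration is disjoint. If $I_p(C,S)\geq 2$, they share a (common) tangent direction, giving $\hat p=\hat q$; since $\sigma$ is a local isomorphism there, $I_{p'}(C',S')=I_{\hat p}(\tilde C,\tilde S)=I_p(C,S)-1\geq 1$, and the configuration is tangent at $p'$. Either way, $I_{p'}(C',S')=I_p(S,C)-1$.

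For the inductive step with $n\geq 2$, apply the base case to the first link $\pi_1$, obtaining $\mathcal{C}_1=(C_1,S_1,s_1,p_1)_{m_1}$ with
\[
I_{p_1}(C_1,S_1)=I_p(C,S)-1\;\geq\;n-1\;\geq\;1,
\]
so $\mathcal{C}_1$ is itself a tangent $a$-configuration. Applying the inductive hypothesis to the remaining transversal $\mathcal{C}_1$-chain of $n-1$ links yields the desired $\mathcal{C}'$, tangent or disjoint, with $I_{p'}(C',S')=I_{p_1}(C_1,S_1)-(n-1)=I_p(S,C)-n$. The ``in particular'' clause is immediate from Definition~\ref{definition:DisjointTangent}: the configuration is tangent precisely when $C'\cap S'$ is nonempty, which by what we showed is equivalent to $I_{p'}(C',S')\geq 1$.

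The main technical point is the base case, specifically the bookkeeping of where the strict transforms meet $E$ and verifying that $s'$ never lies on $S'$; the inductive step is then essentially formal. The rest is an application of the elementary blow-up formula for intersection multiplicities at a common smooth point.
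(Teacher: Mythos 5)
Your proposal is correct and follows essentially the same route as the paper: reduce to a single link by induction, track the strict transforms through the blow-up at $p$ and the contraction of $\tilde f$, observe that $s'\notin S'$ (you via $\tilde S\cap\tilde f=\emptyset$, the paper via $m_{s'}(S')=0$ from Lemma~\ref{lemma:PropertiesaSectionAfterpLink}), and conclude that the intersection multiplicity drops by exactly one per link. Your use of the local blow-up formula $I_p(C,S)=m_p(C)m_p(S)+\sum_{q\in E}I_q(\tilde C,\tilde S)$ in place of the paper's global computation $\tilde C\cdot\tilde S=C\cdot S-1$ together with $\tilde C\cap\tilde S\subset E$ is only a cosmetic difference.
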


\begin{proof}
  By induction, it is enough to prove the statement for $n=1$ since assuming $n\leq I_p(S,C)$ asserts that in each step we have $I_{p_i}(S_i,C_i)\geq 1$ for $i=1,\ldots,n-1$, hence the direct image is again a tangent $a$-configuration. So assume $n=1$.

  Let $s'$ be the inverse point of $\pi$. Observe that $s'\notin S'$ by \ref{item:PropertiesaSectionAfterpLink--mult} of Lemma~\ref{lemma:PropertiesaSectionAfterpLink}.

  Recall that $\rho:X\to \FFF_m$ is the blow up centered at $p$ and $\sigma=\pi\circ\rho:X\to \FFF_{m'}$ is the contraction of the strict transform of the fiber $f$ containing $p$ onto the point $s'\in\FFF_{m'}$.
  Let $\tilde C$, respectively $\tilde S$, be the strict transform of $C$, respectively $S$.
  Note that $p$ is a smooth point of $C$ (because $I_p(C,f)=1$) and of $S$ (because $S$ is smooth since it is a section). Having $p\in C\cap S$ yields \[\tilde C\cdot \tilde S=C\cdot S-1=I_p(C,S)-1.\]
  Since $\mathcal{C}$ is tangent, $C\cap S=\{p\}$ and so $\tilde C\cap \tilde S\subset E$, where $E$ is the exceptional divisor with respect to the blow-up of $p$.
  Let $\hat p$ be the intersection of $E$ and $\tilde C$ (as in the proof of Lemma~\ref{lemma:PropertiesaSectionAfterpLink}), then $\sigma(\hat p)=p'$.
  Hence, $\tilde C$ and $\tilde S$ intersect at most in $\hat p$, giving \[I_{\hat p}(\tilde C,\tilde S)=\tilde C\cdot\tilde S=I_p(S,C)-1,\] and so also $I_{p'}(C',S')=I_{p}(S,C)$, as the intersection multiplicity is a local property.
\end{proof}

To summarize the lemmas of this section, we equip a $3$-configuration $\mathcal{C}=(C,S,s,p)_m$ of type $k\geq -1$ with information \[[C^2,S^2,k,I_p(C,S);m].\]
With this notation, we obtain the following lemma.

\begin{lemma}\label{lemma:InformationOfTangentAConfiguration}
  Let $m\geq0$ and $a\geq 1$ be two integers. Let $\mathcal{C}=(C,S,s,p)_m$ be a tangent $a$-configuration and let $n$ be an integer with $n\leq I_p(S,C)$.
  Let $\pi:\FFF_m\dashrightarrow\FFF_{m'}$ be a transversal $\mathcal{C}$-chain of $n$ links.
  Then $\mathcal{C}'=\pi_*(\mathcal{C})$ is equipped with \[[C^2+n(a^2-2a),S^2-n, \bullet, I_p(S,C)-n;\bullet].\]
  Moreover, if $a=3$ then $C'^2=C^2+3n$, and $\mathcal{C}'$ is of type $k+2n$ if and only if $\mathcal{C}$ is of type $k\geq -1$.
\end{lemma}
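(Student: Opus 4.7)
My plan is to proceed by induction on $n$, since every quantity in the bracket $[C^2+n(a^2-2a),\,S^2-n,\,\bullet,\,I_p(S,C)-n;\,\bullet]$ is additive over the links in the chain, so everything reduces to the case $n=1$ and an invocation of the previously proved lemmas.

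For the base case $n=1$, let $\pi:\FFF_m\dashrightarrow\FFF_{m'}$ be a single $p$-link with inverse point $s'$. The equality $C'^2=C^2+a^2-2a$ is immediate from Lemma~\ref{lemma:PropertiesaSectionAfterpLink}\ref{item:PropertiesaSectionAfterpLink--SelfIntersection} applied to the $a$-configuration $(C,\bullet,\bullet,p)_m$. For $S'^2$, I would point out that tangency of $\mathcal{C}$ forces $p\in S$, and since $S$ is a section one has $I_p(S,f)=1$, so $p$ is a transversal point of $S$ and $(S,\bullet,\bullet,p)_m$ is a $1$-configuration. Applying the same lemma with $a=1$ then gives $S'^2=S^2+1-2=S^2-1$. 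The remaining equality $I_{p'}(C',S')=I_p(C,S)-1$ is exactly Lemma~\ref{lemma:LocalIntersectionAfterTransversalChain} in the case $n=1$.

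For the inductive step, the hypothesis $n\leq I_p(S,C)$ guarantees that after the first link $I_{p_1}(C_1,S_1)=I_p(C,S)-1\geq n-1$, so whenever a further link is required we have $I_{p_1}(C_1,S_1)\geq 1$ and Lemma~\ref{lemma:LocalIntersectionAfterTransversalChain} ensures that the direct image remains a tangent $a$-configuration. This allows the inductive hypothesis to be applied to the remaining chain of $n-1$ links, and additivity yields the three claimed values. For the last sentence, I specialise to $a=3$: since $a^2-2a=3$ we get $C'^2=C^2+3n$, and the ``if and only if'' on the type follows from Lemma~\ref{lemma:3SectionAfterChainOfnLinks} combined with the fact that the type of a $3$-configuration is well defined (Lemma~\ref{lemma:SituationOfA3SectionOnTheFiber}), so that distinct input types produce distinct output types.

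Since every ingredient has been prepared in earlier lemmas, the proof is essentially bookkeeping and I do not anticipate any genuine obstacle. The one point that deserves a moment of care is recognising that Lemma~\ref{lemma:PropertiesaSectionAfterpLink} may be applied to the section $S$ alone, viewing $(S,\bullet,\bullet,p)_m$ as a $1$-configuration sharing the transversal point $p$ with $C$; this is what turns ``$S$ is a section'' into the clean recursion $S'^2=S^2-1$.
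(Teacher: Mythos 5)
Your proposal is correct and follows essentially the same route as the paper: both arguments reduce the bracket to repeated application of Lemma~\ref{lemma:PropertiesaSectionAfterpLink}\ref{item:PropertiesaSectionAfterpLink--SelfIntersection} to the $a$-configuration $(C,\bullet,\bullet,p)_m$ and the $1$-configuration $(S,\bullet,\bullet,p)_m$, use $n\leq I_p(S,C)$ to keep $p_i\in S_i\cap C_i$ at every step, and quote Lemmas~\ref{lemma:LocalIntersectionAfterTransversalChain} and~\ref{lemma:3SectionAfterChainOfnLinks} for the intersection number and the type. Your explicit induction is just a slightly more formal packaging of the paper's ``apply the formula $n$ times'', and your observation that tangency puts $p$ on the section $S$, making $p$ a transversal point of $S$, is exactly the point the paper also relies on.
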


\begin{proof}
  Write $\mathcal{C}'=(C',S',s',p')_{m'}$.
  Lemma~\ref{lemma:LocalIntersectionAfterTransversalChain} gives directly that $I_{p'}(S',C')=I_{p}(S,C)-n$, and Lemma~\ref{lemma:3SectionAfterChainOfnLinks} provides the statement about the type of $\mathcal{C}'$.
  It remains to compute $C'^2$ and $S'^2$.
  We want to apply \ref{item:PropertiesaSectionAfterpLink--SelfIntersection} of Lemma~\ref{lemma:PropertiesaSectionAfterpLink} $n$ times to the $1$-configuration $(S,\bullet,\bullet,p)_m$ and to the $a$-configuration $(C,\bullet,\bullet,p)_m$.
  To do this, we need to know that in each step of the transversal $\mathcal{C}$-chain the point $p_i$ is contained in $S_i\cap C_i$. This is true because we chose $n\leq I_p(S,C)$, and so $I_{p_i}(S_i,C_i)\geq1$, hence $p_i\in S_i\cap C_i$ in each step.
  So by applying~\ref{item:PropertiesaSectionAfterpLink--SelfIntersection} of Lemma~\ref{lemma:PropertiesaSectionAfterpLink}, we get $S'^2=S^2-n$ and $C'^2=C^2+n(a^2-2a)$ as claimed.
\end{proof}

In general, we do not know on which Hirzebruch surface $\FFF_{m'}$ we arrive.
However, under assumptions as in the following lemma, we can determine $m'$.

\begin{lemma}\label{lemma:aCollectionGivesSpecialCollection}
  Let $m\geq0$ and $a\geq 1$ be two integers.
  Let $\mathcal{C}=(C,S,\bullet,p)_m$ be a tangent $a$-configuration and let $S^2\leq n\leq I_p(S,C)$ be an integer.
  Assume that $C^2=2an-a^2S^2$. Let $\pi:\FFF_{m}\dashrightarrow\FFF_{m'}$ be a transversal $\mathcal{C}$-chain of $n$ links and let $\pi_*(\mathcal{C})=(C',S',\bullet,p')_{m'}$.
  Then, $m'=n-S^2$, $S'=S_-$ and $C'\sim aS_+$.
\end{lemma}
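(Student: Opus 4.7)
The plan is to translate the statement into numerical data on $\FFF_{m'}$ using Lemma~\ref{lemma:InformationOfTangentAConfiguration}, and then use the rigidity of the effective cone of a Hirzebruch surface to promote those numerical facts into statements about divisor classes.

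First, Lemma~\ref{lemma:InformationOfTangentAConfiguration} gives $S'^2 = S^2 - n$ and, after substituting the hypothesis $C^2 = 2an - a^2 S^2$,
\[
C'^2 = C^2 + n(a^2 - 2a) = 2an - a^2 S^2 + n(a^2 - 2a) = a^2(n - S^2).
\]
The hypothesis $S^2 \le n$ is exactly what ensures $S'^2 \le 0$.

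The main step, and the one I expect to require the most care, is to upgrade the numerical bound $S'^2 \le 0$ into the geometric identification $S' = S_-$. Iterating Lemma~\ref{lemma:aSectionIsaSection} (applied with $a = 1$) along the chain shows that $S'$ is an irreducible section of $\FFF_{m'}$, so $S' \sim S_- + k' f$ for some integer $k' \ge 0$. If $S' \ne S_-$, then $S'$ and $S_-$ are distinct irreducible curves, forcing $0 \le S' \cdot S_- = -m' + k'$; hence $k' \ge m'$ and $S'^2 = -m' + 2k' \ge m' \ge 0$. Combined with $S'^2 \le 0$ this forces $S' = S_-$, $k' = 0$, and $m' = -S'^2 = n - S^2$. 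The only borderline case is $n = S^2$, where $S'^2 = 0$ and $m' = 0$; on $\FFF_0$ the identification $S' = S_-$ should be read up to an automorphism, since every section of $\FFF_0$ is linearly equivalent to $S_-$.

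Finally, because $C'$ is an effective $a$-section on $\FFF_{m'}$, we may write $C' \sim a S_- + b' f$ for some $b' \ge 0$. Matching $C'^2 = -a^2 m' + 2ab'$ against the value $a^2(n - S^2) = a^2 m'$ already computed forces $b' = a m'$, so $C' \sim a(S_- + m' f) = a S_+$, completing the plan.
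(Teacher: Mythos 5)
Your proposal is correct and follows essentially the same route as the paper: apply Lemma~\ref{lemma:InformationOfTangentAConfiguration} to get $S'^2=S^2-n\leq 0$ and $C'^2=a^2(n-S^2)$, identify $S'$ with $S_-$, and then solve $C'\sim aS_-+b'f$ for $b'=am'$. Your intersection-with-$S_-$ argument merely makes explicit the step the paper states tersely ("a section with nonpositive self-intersection is the $(-m')$-curve"), including the borderline case $m'=0$ where the identification holds up to linear equivalence.
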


\begin{proof}
  It remains to prove that $m'=n-S^2$, using $S^2\leq n$, and that $C'\sim aS_+$ and $S'=S_-$ under the assumption $C^2=2an-a^2S^2$.
  Applying Lemma~\ref{lemma:InformationOfTangentAConfiguration}, we find that $S'^2=S^2-n\leq 0$.
  Since $S'$ is a section, it is therefore irreducible and isomorphic to $\PPP^1$, and hence it is the $(-m')$-curve on $\FFF_{m'}$.
  Therefore, $m'=n-S^2$ and $S'=S_-\subset\FFF_{m'}$.
  With Lemma~\ref{lemma:InformationOfTangentAConfiguration} we also find that \[C'^2=C^2+n(a^2-2a)=a^2(n-S^2)=a^2m'.\]
  As $C'$ is an $a$-section, we have $C_n\sim aS_-+bf$ for some $b\geq0$. Inserting this into the value of $C'^2$ yields $b=am'$ and hence $C'\sim aS_+$.
\end{proof}

\subsection{The ingredients}\label{section:ToBe--Ingredients}

In this section we will show the existence of some tangent $3$-configurations $\mathcal{C}=(C,S,\bullet,p)_m$ satisfying the assumptions of Lemma~\ref{lemma:aCollectionGivesSpecialCollection} with $m=0$ (that is, a configuration in $\FFF_0=\PPP^1\times\PPP^1$) or $m=1$ (that is, a configuration in $\FFF_1$ obtained by the blow-up at one point of a configuration in $\PPP^2$), and we let $n=I_p(S,C)$.

These are then the ``ingredients'' that we can put into the ``recipe'' that we established in the last section:
Applying a transversal $\mathcal{C}$-chain of $n$ links $\pi:\FFF_m\dashrightarrow\FFF_{m'}$, we get a divisor $C'\sim 3S_+\subset\FFF_{m'}$, where $m'=n-S^2$, and $C'$ has a singularity of type $A_{K}$, where $K=2n+k$ and $\mathcal{C}$ is of type $k$.
Upon the divisor $C'\subset\FFF_{m'}$ we can apply Lemma~\ref{lemma:PolynomialVsDivisor} and find a polynomial $F\in\CCC[x,y]$ of bidegree $(3,3m')$ that has a singularity of type $A_{K}$ at some point.

Moreover, if the curve $C$ we started with has a special intersection property with some general fiber as in \ref{item:PolynomialVsDivisor--A} or~\ref{item:PolynomialVsDivisor--B}, then also $C'$ has the same intersection property with a general fiber, since $\pi$ sends a general fiber onto a general fiber, leading to $F$ being of bidegree $(3,3m'-1)$ (case~\ref{item:PolynomialVsDivisor--A}) or $(3,3m'-2)$ (case~\ref{item:PolynomialVsDivisor--B}).

To recapitulate, we start with a relatively ``easy'' configuration and can then find a polynomial of a certain type with a ``large'' singularity of type $A_K$. In this way, we will get a lower bound for $N(3,b)$.

First, we give a $3$-configuration $(C,S,\bullet,p)_0$ in $\FFF_0=\PPP^1\times\PPP^1$ that yields with the method described above a lower bound of $N(3,9)$.

\begin{lemma}\label{lemma:Existence-3-9--13}
  There exists an irreducible polynomial of bidegree $(3,9)$ with a singularity of type $A_{13}$.
\end{lemma}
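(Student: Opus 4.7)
The plan is to apply the recipe of Section~\ref{section:ToBe--Recipe}: exhibit a tangent $3$-configuration $\mathcal{C}=(C,S,s,p)_m$ on some Hirzebruch surface $\FFF_m$ satisfying the hypotheses of Lemma~\ref{lemma:aCollectionGivesSpecialCollection}, so that a transversal $\mathcal{C}$-chain of $n$ links produces a divisor $C'\sim 3S_+$ on $\FFF_3$ with an $A_{13}$-singularity. Lemma~\ref{lemma:PolynomialVsDivisor} then converts $C'$ into the desired polynomial $F$ of bidegree $(3,9)$, with irreducibility tracked through the chain via Lemma~\ref{lemma:aSectionIsaSection}.

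The numerical constraints force $n-S^2=m'=3$ (Lemma~\ref{lemma:aCollectionGivesSpecialCollection}), $k+2n=13$ (Lemma~\ref{lemma:3SectionAfterChainOfnLinks}), and $C^2=18-3S^2$. A convenient choice is $(m,S^2,n,k)=(1,1,4,5)$, giving $C\sim 3S_-+4f$ and $S\sim S_+$ on $\FFF_1$ with $I_p(C,S)=4$. Contracting $S_-$ to a point $q\in\PPP^2$, this $3$-configuration corresponds to the following ingredient:
\begin{itemize}
\item an irreducible rational plane quartic $\tilde C$ smooth at $q$ with a single $A_5$-singularity at a point $\tilde s\neq q$;
\item a line $\tilde S$ not passing through $q$, satisfying $\tilde S\cap \tilde C=\{\tilde p\}$ with $I_{\tilde p}(\tilde S,\tilde C)=4$ (a hyperflex tangent of $\tilde C$);
\item the three points $q,\tilde p,\tilde s$ collinear, with the line through them meeting $\tilde C$ transversally at $\tilde p$.
\end{itemize}
One checks the compatibility: since $\tilde C\cdot \overline{\tilde p\tilde s}=4$ and this line meets $\tilde C$ at $\tilde s$ with multiplicity $2$, it meets $\tilde C$ at two further smooth points, which we take to be $\tilde p$ and $q$.

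The heart of the proof is exhibiting this ingredient. One constructs it explicitly, e.g., by taking a parametrization $\phi:\PPP^1\to\PPP^2$ of degree $4$ with a prescribed double point displaying third-order contact (yielding $A_5$) and tuning the remaining parameters until a hyperflex tangent appears; then $q$ is read off as the fourth intersection of the line $\overline{\tilde p\tilde s}$ with $\tilde C$. Once the ingredient is in hand, Lemmas~\ref{lemma:aCollectionGivesSpecialCollection} and~\ref{lemma:3SectionAfterChainOfnLinks} yield $C'\sim 3S_+$ on $\FFF_3$ with an $A_{13}$-singularity, and Lemma~\ref{lemma:PolynomialVsDivisor} delivers the irreducible polynomial $F$ of bidegree $(3,9)$. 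The main obstacle is the explicit construction of the $\PPP^2$-ingredient: rational quartics with $A_5$-singularity form a positive-dimensional moduli, and requiring a hyperflex tangent is a codimension-one condition, so the locus we need is nonempty, but a concrete parametric example must actually be written down to complete the proof.
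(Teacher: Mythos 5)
Your reduction is a correct instantiation of the paper's recipe and the numerology checks out: with $(m,S^2,n,k)=(1,1,4,5)$ one gets $m'=n-S^2=3$, type $k+2n=13$, and $C^2=6n-9S^2=15$, i.e.\ $C\sim 3S_-+4f$ on $\FFF_1$, which after contracting $S_-$ is exactly the quartic-with-$A_5$-and-hyperflex configuration you describe; the consistency checks you run on the line $\overline{\tilde p\tilde s}$ are also right. The problem is that the entire content of the lemma is the existence of the ingredient, and you do not establish it. The argument ``rational quartics with an $A_5$-singularity form a positive-dimensional family, a hyperflex is a codimension-one condition, hence the locus we need is nonempty'' is not a proof: a divisorial condition on a positive-dimensional family can fail to be attained at all, and even when the locus is nonempty it could be supported entirely on degenerate members --- reducible quartics, quartics where the singularity worsens to $A_6$, or configurations where the hyperflex point collides with the singular point --- any of which would violate the hypotheses of Lemma~\ref{lemma:P2GivesRightSituationInF1}. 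You concede that ``a concrete parametric example must actually be written down''; that is precisely the missing step, so as it stands the proof is incomplete.

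For comparison, the paper avoids this difficulty by choosing different parameters, namely $(m,S^2,n,k)=(0,4,7,-1)$: it works on $\FFF_0=\PPP^1\times\PPP^1$ with an explicit curve $C$ of bidegree $(3,1)$ whose coefficients are the entries of the seventh row of Pascal's triangle and an explicit section $S$ of bidegree $(1,2)$, so that the whole tangency condition $I_p(S,C)=7$ reduces to the one-line identity $F(y_0^2,y_1^2,y_0,y_1)=(y_0+y_1)^7$. Crucially, the starting configuration there is of type $-1$ (the fiber through $p$ meets $C$ in three distinct points), so no singularity whatsoever has to be built into the ingredient, which is what makes its existence easy to certify. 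If you wish to keep your route through $\PPP^2$ you must actually exhibit the quartic; otherwise the pragmatic fix is to trade the $A_5$-ingredient for a type $-1$ or type $0$ ingredient with a correspondingly longer chain of links, as the paper does.
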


\begin{proof}
  Consider the curves $C=V(F)$ and $S=V(G)$ in $\FFF_0=\PPP^1\times\PPP^1$, where $G= x_0y_1^2-x_1y_0^2$ and \[F=x_0^3(y_0+7\,y_1)+x_0^2\,x_1(21\,y_0+35\,y_1)+x_0\,x_1^2(35\,y_0+21\,y_1)+x_1^3(7\,y_0+y_1).\] Let $p=[1:1;1:-1]$ and let $f$ be the fiber going through $p$, hence $f$ is given by $y_0+y_1=0$.
  We prove that $\mathcal{C}=(C,S,\bullet,p)_0$ is a tangent $3$-configuration.

  First, we show that $p$ is the unique intersection point of $C$ and $S$.
  We can parametrize $S$ by $[y_0:y_1]\mapsto([y_0^2:y_1^2],[y_0:y_1])$.
  Inserting this parametrization into $F$, we find \[F(y_0^2,y_1^2,y_0,y_1)=(y_0+y_1)^7,\] and so $C$ and $S$ intersect only at $p$ with $I_p(S,C)=7$.

  Next, we show that $S$ is a section. Since $S$ is of bidegree $(1,2)$, it satisfies $S\cdot f=1$ for any fiber $f$ that is given by a linear equation in $y_0,y_1$ (and $S^2=4$).
  To see that $S$ is a section, we need to check that it does not contain any fibers.
  If $S$ would contain a fiber, then it were the fiber $y_0+y_1=0$ that contains $p$ (otherwise, $S$ and $C$ would meet in a second point). On this fiber we have $G(x_0,x_1,1,-1)= x_0+x_1$, which does not vanish everywhere. Therefore, $S$ does not contain any fiber and is hence a section.

  Similarly, we note that $F$ is of bidegree $(3,1)$ and so $C\cdot f=3$ (and $C^2=6$). We can compute the intersection of $C$ and $f$, namely \[F(x_0,x_1,y_0,-y_0)=(3x_0+x_1)(x_0+3x_1)(x_0-x_1).\] Hence, $C$ intersects $f$ in three distinct points and so $I_p(C,f)=1$, hence $p$ is a transversal point of $C$.
  We can now see that $C$ does not contain any fibers: If $C$ would contain a fiber, then it would be the fiber going through $p$ (otherwise, $C$ and $S$ would intersect in a second point). This contradicts $p$ being a transversal point of $C$. Hence, $C$ is a $3$-section.

  To sum it up: $\mathcal{C}$ is a tangent $3$-configuration equipped with information \[[6,4,-1,7;0],\] where it is of type $-1$ because $C$ and $f$ intersect at $3$ distinct points.
  Letting $n=7$, one can check that the assumptions of Lemma~\ref{lemma:aCollectionGivesSpecialCollection} are satisfied.
  Applying this lemma and Lemma~\ref{lemma:InformationOfTangentAConfiguration}, we get a $3$-configuration $\mathcal{C}'=(C',S_-,s',p')_{3}$ of type $-1+2n=13$ such that $C'\sim 3S_+$.
  By Lemma~\ref{lemma:ReducibleDivisorsNotInteresting}, $C'$ is irreducible.
  Therefore, there is an irreducible polynomial $F$ of bidegree $(3,9)$ such that $C'$ is its $(3,9)$-divisor by Lemma~\ref{lemma:PolynomialVsDivisor}, and so $F$ has an $A_{13}$-singularity.
\end{proof}

\begin{remark}\label{remark:BinomialCoefficientExample}
  Note that the coefficients of the polynomial $F$ from the above lemma is the 7th row of Pascal's triangle and so they are binomial coefficients.
  We can generalise Lemma~\ref{lemma:Existence-3-9--13} to the following statement:
  \begin{quotation}
    Let $m\geq 3$ be an odd integer. Then there exists an irreducible polynomial of bidegree $(3,3m)$ with at least an $A_{4m+1}$-singularity.
  \end{quotation}
  It can be proved by writing $m=2a-1$ for an integer $a\geq 2$ and considering the curves $C=V(F)$ and $S=V(G)$ in $\FFF_0$, where \begin{align*}
      G = & x_0y_1^a-x_1y_0^a,\\
      F = & x_0^3\sum_{i=0}^{a-1}{{4a-1}\choose{i}}y_0^{a-1-i}y_1^i + x_0^2x_1\sum_{i=a}^{2a-1}{{4a-1}\choose{i}}y_0^{2a-1-i}y_1^{i-a} +\\
       & x_0x_1^2\sum_{i=2a}^{3a-1}{{4a-1}\choose{i}}y_0^{3a-1-i}y_1^{i-2a} + x_1^3\sum_{i=3a}^{4a-1}{{4a-1}\choose{i}}y_0^{4a-1-i}y_1^{i-3a}.\\
    \end{align*}
  Note that the coefficients of $F$ correspond to the $(4m-1)$th row of Pascal's triangle, which is the nice part: By plugging in the parametrisation of $S$ into $F$ we find \[F(y_0^a,y_1^a,y_0,y_1)=(y_0+y_1)^{4a-1},\] and so $p=[1:(-1)^a;1:-1]$ is the unique intersection point of $S$ and $C$, providing tangency in case $(C,S,\bullet,p)_0$ is a $3$-configuration.
  However, to show that it is a $3$-configuration involves lengthy computations with binomial coefficients, which is the reason why we refrain from presenting the proof.
  Furthermore, this gives an asymptotical lower bound $N(3,b)\geq \frac{4}{3}b$, whereas Lemma~\ref{lemma:LowerBoundForN(3,2b)} gives $N(3,b)\geq \frac{3}{2}b$.
\end{remark}

\begin{remark}\label{remark:SpecialWeierstrassPoints}
  We touch upon a connection to Weierstrass points on $\PPP^1\times \PPP^1$ as introduced by Maugesten and Moe \cite{maugesten-moe_2018} in 2018.
  A curve $S\subset\PPP^1\times\PPP^1$ of degree $(\alpha,\beta)$ is said to be a \textit{hyperosculating curve} to some curve $C$, if \[I_p(S,C)>(\alpha+1)(\beta+1)-1,\] and if this holds $p$ is an \textit{$(\alpha,\beta)$-Weierstrass point} of $C$.
  They study the case where $\alpha$ and $\beta$ are at most one.

  In our situation, if an $a$-configuration $(C,S,\bullet,p)_0$ is tangent, then $S\sim S_-+\beta f$ is a hyperosculating curve to $C$, and $p$ is an $(1,\beta)$-Weierstrass point of $C$.
  For instance, the curve $C$ of Remark~\ref{remark:BinomialCoefficientExample}, which has degree $(3,a-1)$, contains $p$ as a smooth $(1,a)$-Weierstrass point, with hyperosculating curve $S$ of degree $(1,a)$.

  Looking ahead, the examples of tangent $3$-configurations we obtain on $\FFF_1$ (after blowing up a situation in $\PPP^2$) can be transformed to a tangent $3$-configuration in $\FFF_0$ after just one elementary link centered at $p$, and so we get a curve $C\subset\FFF_0$ of degree $(3,b)$ that has an $(1,\beta)$-Weierstrass point, where $S\sim S_-+\beta f\subset\FFF_0$.

  For example, Lemma~\ref{lemma:LowerBoundForN(3,2b)} implies the existence of a curve $C$ of degree $(3,3k)$ that has a $(1,5k)$-Weierstrass point for every $k\geq1$.
  It would be interesting to know under which circumstances it is possible to have two curves of degree $(1,a)$ and $(3,b)$ that intersect in one point only, especially if $a>b$.
\end{remark}

The following ``ingredients'' are tangent $3$-configurations in $\FFF_1$, which we obtain from configurations of curves in $\PPP^2$.
We start with two curves $S$ and $C$ in $\PPP^2$ that are very tangent at a point $p$. Then, we do a blow-up $\sigma:\FFF_1\to\PPP^2$ centered at a point $q\neq p$.
The following lemma gives conditions for $C$, $S$, $p$, and $q$ such that $(\tilde C,\tilde S,\bullet,\hat p)_1$ is a tangent $3$-configuration satisfying $\tilde C^2=6n-9\tilde S^2$, where $\hat p=\sigma^{-1}(p)$, $\tilde C$, $\tilde S$ are the strict transforms of $C$ respecitvely $S$, and $n=I_p(S,C)$.

\begin{lemma}\label{lemma:P2GivesRightSituationInF1}
  Let $p$ and $q$ be two distinct points on $\PPP^2$ and let $L$ be the line meeting both. Let $C$ and $S$ be two curves meeting $p$ that do not contain any line passing through $q$ and that satisfy the following conditions: \begin{enumerate}
    \item\label{item:P2GivesRightSituationInF1--smooth} $p$ is a smooth point of $C$,
    \item\label{item:P2GivesRightSituationInF1--mqC} $m_q(C)=\deg C-3$,
    \item\label{item:P2GivesRightSituationInF1--mqS} $m_q(S)=\deg S-1$,
    \item\label{item:P2GivesRightSituationInF1--Ip} $n:=I_p(S,C)=3\deg S+\deg C-3$.
  \end{enumerate}
  Let $\sigma:\FFF_1\to\PPP^2$ be the blow-up centered at $q\in \PPP^2$ and let $\tilde C$ denote the strict transform of $C$, and $\tilde S$ the one of $S$.
  Let $\hat p=\rho^{-1}(p)$.
  Then, $(\tilde C,\tilde S,\bullet,\hat p)_1$ is a tangent $3$-configuration with information \[[6\deg(C)-9, 2\deg(S)-1,\bullet,n;1].\] In particular, $\tilde C^2=6n-9\tilde S^2$.
\end{lemma}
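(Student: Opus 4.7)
My plan is to verify, in order, that $\tilde C$ is a $3$-section and $\tilde S$ is a section with the claimed self-intersections, that $\tilde C$ and $\tilde S$ meet only at $\hat p$ with $I_{\hat p}(\tilde C, \tilde S)=n$, and finally that $\hat p$ is a transversal point of $\tilde C$. The first two steps are essentially blow-up bookkeeping; the last is the only place where I expect the precise hypotheses to interact in a delicate way.

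For the self-intersections and the (3-)section statuses, I would use the $\PPP^1$-bundle structure on $\FFF_1$ in which the fibers are the strict transforms $\tilde \ell$ of lines $\ell \subset \PPP^2$ through $q$. The standard formula $\tilde D^2=(\deg D)^2 - m_q(D)^2$ applied with conditions~(2) and~(3) gives $\tilde C^2 = 6\deg C - 9$ and $\tilde S^2 = 2\deg S-1$, while $\tilde C \cdot \tilde \ell = \deg C - m_q(C) = 3$ and $\tilde S \cdot \tilde \ell = 1$. Since by assumption neither $C$ nor $S$ contains a line through $q$, neither $\tilde C$ nor $\tilde S$ contains a fiber, so $\tilde C$ is a $3$-section and $\tilde S$ a section.

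For the tangency, I would apply B\'ezout to $C$ and $S$:
\[
\deg C \cdot \deg S = C\cdot S \geq I_p(S,C) + m_q(S)\,m_q(C) = n + (\deg S-1)(\deg C - 3),
\]
and condition~(4) makes the right-hand side equal to $\deg C\cdot \deg S$. Hence equality is forced throughout: $C\cap S = \{p,q\}$ and $I_q(S,C) = m_q(S)\,m_q(C)$. The latter equality is precisely the statement that the tangent cones of $S$ and $C$ at $q$ share no common line, so $\tilde C$ and $\tilde S$ do not meet on the exceptional divisor of $\sigma$. Consequently $\tilde C\cap \tilde S = \{\hat p\}$ with $I_{\hat p}(\tilde C,\tilde S)=I_p(S,C)=n$, and the identity $\tilde C^2 = 6n-9\tilde S^2$ then follows by direct substitution.

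The hard part will be showing $\hat p$ is a transversal point of $\tilde C$, i.e.\ $I_{\hat p}(\tilde C, f)=1$ for $f$ the fiber through $\hat p$. Since $\sigma$ is an isomorphism near $p$, this equals $I_p(C,L)$, which is not immediately controlled by the hypotheses. My idea is a two-step argument. First, B\'ezout applied to $S$ and $L$ gives $\deg S = S\cdot L \geq I_p(S,L) + m_q(S)\,m_q(L) = I_p(S,L) + \deg S - 1$, forcing $I_p(S,L)=1$; this in turn forces $S$ to be smooth at $p$ with tangent direction distinct from that of $L$. Second, if $I_p(C,L)\geq 2$ then by condition~(1) the curve $C$ is smooth at $p$ with tangent $T_pC = T_pL \neq T_pS$, so both $S$ and $C$ would be smooth at $p$ with distinct tangents and $I_p(S,C)=1$, contradicting $n=3\deg S + \deg C - 3\geq 3$. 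Hence $I_p(C,L)=1$, $\hat p$ is transversal, and $(\tilde C,\tilde S,\bullet,\hat p)_1$ is the asserted tangent $3$-configuration.
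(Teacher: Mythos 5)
Your proof is correct and takes essentially the same route as the paper's: the same blow-up bookkeeping for $\tilde C^2$, $\tilde S^2$ and the fiber intersections, the same numerical identity $n+m_q(S)\,m_q(C)=\deg S\cdot\deg C$ to force $\tilde C\cap\tilde S=\{\hat p\}$ (you run it downstairs via B\'ezout where the paper computes $\tilde S\cdot\tilde C$ upstairs on $\FFF_1$), and the same tangent-direction argument at $p$ for the transversality of $\hat p$. The paper phrases that last step directly ($I_{\hat p}(\tilde C,\tilde S)\geq 3$ makes $\tilde C$ tangent to the section $\tilde S$, and sections meet fibers transversally), whereas you argue by contradiction via $I_p(S,L)=1$; the content is identical.
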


\begin{proof}
  Since the strict transforms of the lines going through $q$ are the fibers in $\FFF_1$, $\tilde S$ and $\tilde C$ do not contain any fiber.
  In particular, the strict transform $\tilde L$ is a fiber.
  With \ref{item:P2GivesRightSituationInF1--mqC} we have $\tilde C\cdot \tilde L=C\cdot L-m_q(C)=\deg C -m_q(C)=3$, hence $\tilde C$ is a $3$-section.
  Similarly, we find with \ref{item:P2GivesRightSituationInF1--mqS} that $\tilde S$ is a section.
  To see that $\hat p$ is a transversal point of $\tilde C$ we insert \ref{item:P2GivesRightSituationInF1--mqC} and \ref{item:P2GivesRightSituationInF1--mqS} into
  \ref{item:P2GivesRightSituationInF1--Ip} and find \[ I_p(S,C)=3\left(m_q(S)+1\right)+m_q(C)\geq 3.\]
  Since $p$ is distinct from $q$ and $\hat p$ is a smooth point of $\tilde C$ by~\ref{item:P2GivesRightSituationInF1--smooth} (and $p$ is a smooth point of $S$ by~\ref{item:P2GivesRightSituationInF1--mqS}), we have $I_{\hat p}(\tilde C,\tilde S)=I_p(S,C)\geq 3$, and so $\tilde C$ is tangent to the section $\tilde S$ at $\hat p$.
  As a section intersects all fibers transversally, the $3$-section $\tilde C$ intersects the fiber $\tilde L$ transversally at $\hat p$.
  Hence, $\hat p$ is a transversal point of $\tilde C$ and so $(\tilde C,\tilde S,\bullet,\hat p)_1$ is a $3$-configuration.

  To prove that the $3$-configuration $(\tilde C,\tilde S,\bullet,\hat p)_1$ is tangent, we compute using \ref{item:P2GivesRightSituationInF1--mqC} and \ref{item:P2GivesRightSituationInF1--mqS} \begin{align*}
    \tilde S\cdot \tilde C&=\deg(S)\deg(C)-m_q(S)m_q(C)\\
    & = \deg S\deg C-(\deg S -1)(\deg C-3)\\
    & = 3\deg S +\deg C -3,
  \end{align*} so with \ref{item:P2GivesRightSituationInF1--Ip} we find that $\tilde S \cdot \tilde C=I_p(S,C)=I_{\hat p}(\tilde S,\tilde C)$, since a blow-up is outside the exceptional divisor an isomorphism.
  So $\tilde C$ and $\tilde S$ intersect only at the point $\hat p$.
  Therefore, it is a tangent $3$-configuration.

  Finally, with  \ref{item:P2GivesRightSituationInF1--mqC}
  we compute \[\tilde C^2=\deg C^2-m_q(C)^2=6\deg C-9\] and with \ref{item:P2GivesRightSituationInF1--mqS} and \ref{item:P2GivesRightSituationInF1--Ip} we find \begin{align*}
    6n-9\tilde S^2 & =6\left(3\deg S+\deg C-3\right)-9\left(\deg S^2-\left(\deg S-1\right)^2\right)\\
     & = 6\deg C-9,
  \end{align*} so $\tilde C^2=6n-9\tilde S^2$ holds.
\end{proof}

In the following we give specific examples of curves $S$, $C$ and $L$ in $\PPP^2$ with a point $p$ and a distinct point $q$ that will be blown up.
We will show that they satisfy the assumptions of Lemma~\ref{lemma:P2GivesRightSituationInF1} and can therefore apply Lemma~\ref{lemma:aCollectionGivesSpecialCollection} onto $\mathcal{C}=(\tilde C,\tilde S,\bullet,\hat p)_1$.
We determine of which type $k\geq -1$ the $a$-configuration $\mathcal{C}$ is, and then Lemma~\ref{lemma:3SectionAfterChainOfnLinks} gives a large singularity of type $A_K$.
If we want to achieve a case with $r=1$ or $r=2$, we will also add to the situation in $\PPP^2$ a line $T$ (going through $q$) and describe the intersection with $C$ at a point $t\in T$.
This corresponds then to a situation such as \ref{item:PolynomialVsDivisor--A} or \ref{item:PolynomialVsDivisor--B} in Lemma~\ref{lemma:PolynomialVsDivisor}, using the fact that the line $T$ is a fiber after a blow-up.

The examples with large singularites we give provide irreducible polynomials. This follows directly from Corollary~\ref{corollary:ReducibleDivisorsNotInteresting}.

\begin{lemma}
  There exists an irreducible polynomial of bidegree $(3,5)$ with a singularity of type $A_7$.
\end{lemma}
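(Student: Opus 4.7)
The plan is to build a suitable tangent $3$-configuration on $\FFF_1$ using Lemma~\ref{lemma:P2GivesRightSituationInF1} with $\deg S=1,\deg C=3$, then apply a transversal chain of $n=3$ links via Lemma~\ref{lemma:aCollectionGivesSpecialCollection} to land on $\FFF_2$ with a $3$-section $C'$ carrying an $A_7$-singularity, and finally recover an irreducible polynomial of bidegree $(3,5)$ from $C'$ via Lemma~\ref{lemma:PolynomialVsDivisor}.

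Concretely, I would take an irreducible nodal cubic $C\subset\PPP^2$ with node $s$ and two distinct smooth flexes $p,t$ whose flex tangents $S,T$ avoid the node; such a cubic exists (for instance $y^2z=x^3+x^2z$, whose Hessian yields two complex flexes at $[-4/3:\mp 4i/(3\sqrt 3):1]$ with tangent lines not passing through the origin). Letting $L$ be the line through $p$ and $s$ and $q:=L\cap T$, a short check gives $q\neq p,s,t$ and $q\notin C\cup S$. The hypotheses of Lemma~\ref{lemma:P2GivesRightSituationInF1} then hold: $p$ is smooth on $C$, $m_q(C)=m_q(S)=0$, and $I_p(S,C)=3$ because $S$ is a flex tangent. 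Blowing up $q$ yields a tangent $3$-configuration $\mathcal{C}=(\tilde C,\tilde S,\bullet,\hat p)_1$ on $\FFF_1$ with information $[9,1,1,3;1]$; the type is $k=1$ because $\tilde C\cap\tilde L=\{\hat p,s\}$ with $s$ still an $A_1$-singularity of $\tilde C$.

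Next I would apply a transversal $\mathcal{C}$-chain of $n=3$ links. The requirements of Lemma~\ref{lemma:aCollectionGivesSpecialCollection} hold since $S^2=1\leq 3=I_p(S,C)$ and $C^2=9=2\cdot 3\cdot 3-9\cdot 1$, so the chain lands on $\FFF_{m'}$ with $m'=n-S^2=2$ and produces $C'\sim 3S_+$. By Lemma~\ref{lemma:3SectionAfterChainOfnLinks} the image configuration is of type $2n+k=7$, so $C'$ carries an $A_7$-singularity at the inverse point $s'$. Since $\tilde T$ and $\tilde L$ are disjoint fibers of $\FFF_1$, the chain restricts to an isomorphism in a neighborhood of $\tilde T$, so $T':=\pi_*(\tilde T)$ is a fiber of $\FFF_2$ with $T'\cap C'=\{t'\}$ and $I_{t'}(T',C')=I_t(T,C)=3$.

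Finally, $S_-=\pi_*(\tilde S)$ satisfies $S_-\cdot C'=3(-2)+6\cdot 1=0$, so neither $s'$ nor $t'$ lies on $S_-$, and they sit on distinct fibers; Lemma~\ref{lemma:TwoPointsCanBeChosenAsWished} then moves $s'$ to $[0:1;0:1]$ and $t'$ to $[0:1;1:0]$. Lemma~\ref{lemma:PolynomialVsDivisor},\ref{item:PolynomialVsDivisor--DP} delivers a polynomial $F$ whose $(3,6)$-divisor is $C'$, with an $A_7$-singularity at the origin, and Lemma~\ref{lemma:PolynomialVsDivisor},\ref{item:PolynomialVsDivisor--A} (witnessed by $T'$) upgrades the bidegree to $(3,5)$. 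Irreducibility of $F$ follows from Corollary~\ref{corollary:ReducibleDivisorsNotInteresting}, since a reducible polynomial of bidegree $(3,5)=(3,3\cdot 2-1)$ has at most an $A_{4\cdot 2-1-1}=A_6$-singularity. The main obstacle is producing a nodal cubic whose flex configuration is in general position relative to its node, handled via the explicit model above.
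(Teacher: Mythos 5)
Your proposal is correct and follows essentially the same route as the paper: the paper's explicit configuration ($C\colon z(x^2+xy+y^2)+xy(x+y)=0$ with $S\colon x+z=0$ and $T\colon y+z=0$) is itself an irreducible nodal cubic with node $s$ and two flex tangents $S,T$ avoiding the node, with $q$ taken as the intersection of the other flex tangent with the line through $p$ and $s$ — exactly your setup, followed by the same chain of Lemmas~\ref{lemma:P2GivesRightSituationInF1}, \ref{lemma:aCollectionGivesSpecialCollection}, \ref{lemma:PolynomialVsDivisor}\ref{item:PolynomialVsDivisor--A} and Corollary~\ref{corollary:ReducibleDivisorsNotInteresting}. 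The only differences are your choice of model for the nodal cubic and your (welcome) explicit invocation of Lemma~\ref{lemma:TwoPointsCanBeChosenAsWished} to normalize the positions of $s'$ and $t'$, which the paper leaves implicit.
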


\begin{proof}
  Let us consider the following configuration in $\PPP^2$:
  \begin{center}
    \begin{tabular}{>{$}l<{$}  >{$}l<{$} }
      L: x=0, & q=[0:1:-1],\\
      S: x+z=0, & p=[0:1:0],\\
      C: z(x^2+xy+y^2)+xy(x+y)=0, & s=[0:0:1],\\
      T: y+z=0, & t=[1:0:0].
    \end{tabular}\begin{tikzpicture}[xscale=0.5,yscale=0.3,baseline=(b.base)]
    \draw (0,-1) -- (0, 7);
    \draw (1,-1) -- (-4, 1);
    \draw (-4,0) -- (1, 7);
    \node (b) at (0,2){};

    \draw (0,3) ..controls (-0.5,3.2) and (-1.1, 4).. (-1.9,3);
    \draw (-1.9,3) ..controls (-2.3,2.3) and (-3, 2).. (-4,2.5);
    \draw (0,3) ..controls +(2,0) and +(1, 2).. (0,3);
    \draw (0,3) ..controls +(-1,-3.2) and +(-1.7,-0.3).. (1,-0.5);

    \draw (-2.2,-0.5) node{\scriptsize{$S$}};
    \draw (-3.5,2.8) node{\scriptsize{$C$}};
    \draw (-2.2,3.2) node{\scriptsize{$t$}};
    \draw (-1.3,4.5) node{\scriptsize{$T$}};
    \draw (0.2,5.2) node{\scriptsize{$q$}};
    \draw (0.2,4) node{\scriptsize{$s$}};
    \draw (0.2,1.5) node{\scriptsize{$L$}};
    \draw (0.2,-1.2) node{\scriptsize{$p$}};

    \end{tikzpicture}
  \end{center}
  First, note that $L$ is the line meeting $p$ and $q$ and that $S$ and $C$ both contain $p$. Neither of them contains a line through $q$, since $q$ does not lie on $C$ nor on $S$.
  Let us now check the conditions \ref{item:P2GivesRightSituationInF1--smooth} to \ref{item:P2GivesRightSituationInF1--Ip} from Lemma~\ref{lemma:P2GivesRightSituationInF1}, which then gives us a $3$-configuration in $\FFF_1$.

  For \ref{item:P2GivesRightSituationInF1--smooth} we remark that $p\in C$ is a smooth point.
  For \ref{item:P2GivesRightSituationInF1--mqC}, \ref{item:P2GivesRightSituationInF1--mqS} and \ref{item:P2GivesRightSituationInF1--Ip}
  note that $q$ does not lie on $S\cup C$, so $m_q(S)=m_q(C)=0$. Since $C$ has degree 3 and $S$ has degree 1, \ref{item:P2GivesRightSituationInF1--mqC} and \ref{item:P2GivesRightSituationInF1--mqS} follow.
  Part \ref{item:P2GivesRightSituationInF1--Ip} holds because by inserting $ z = - x$ into $C$ we see that $C$ and $S$ intersect only at $p$, so $I_p(S,C) = \deg C\cdot\deg S = 3$.
  So Lemma~\ref{lemma:P2GivesRightSituationInF1} gives us a tangent $3$-configuration $\mathcal{C}=(\tilde C,\tilde S,\hat s,\hat p)_1$ equipped with information $[9,1,1,3;1]$, since $C$ has an $A_1$-singularity at $s$, which lies on $L$.

  We can thus apply Lemma~\ref{lemma:aCollectionGivesSpecialCollection} on a transversal $\mathcal{C}$-chain of $3$ links and get a disjoint $3$-configuration $\mathcal{C}'=(C',S_-,s',p')_{m'}$, where $m'=3-1=2$ and $C'\sim 3S_+\subset\FFF_2$.
  Lemma~\ref{lemma:InformationOfTangentAConfiguration} says that $\mathcal{C}'$ is of type $7$, since $2n+1=7$.
  Moreover, the line $T$ (containing $q$) intersects $C$ only at $t$. Therefore, $C'$ intersects a fiber $T'$ at only one point $t'$, giving $I_{t'}(T',C')=3$.
  So we are in case~\ref{item:PolynomialVsDivisor--A} of Lemma~\ref{lemma:PolynomialVsDivisor}.
  Finally, \ref{item:PolynomialVsDivisor--DP} of Lemma~\ref{lemma:PolynomialVsDivisor} asserts that there exists a polynomial of bidegree $(3,3m'-1)=(3,5)$ with a singularity of type $A_7$.
  This polynomial is irreducible by Corollary~\ref{corollary:ReducibleDivisorsNotInteresting}.
\end{proof}

\begin{lemma}
  There exists an irreducible polynomial of bidegree $(3,7)$ with a singularity of type $A_{10}$.
\end{lemma}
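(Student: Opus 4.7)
The plan is to follow the recipe of the preceding lemma: exhibit a suitable arrangement in $\PPP^2$, blow up a chosen point to produce a tangent $3$-configuration in $\FFF_1$, apply a transversal chain of links to land in $\FFF_3$, and then invoke~\ref{item:PolynomialVsDivisor--DP} of Lemma~\ref{lemma:PolynomialVsDivisor} to extract the polynomial. First I would pin down the numerics: since $7 = 3 \cdot 3 - 2$, we need $m' = 3$ and $r = 2$, so the target polynomial must satisfy case~\ref{item:PolynomialVsDivisor--B} of Lemma~\ref{lemma:PolynomialVsDivisor}. Combining $K = 2n + k = 10$ (Lemma~\ref{lemma:3SectionAfterChainOfnLinks}) with the identities $m' = n - S^2$ and $\tilde C^2 = 2an - a^2 S^2$ of Lemma~\ref{lemma:aCollectionGivesSpecialCollection}, together with $S^2 = 2\deg S - 1$ and $\tilde C^2 = 6 \deg C - 9$ from Lemma~\ref{lemma:P2GivesRightSituationInF1}, forces the unique admissible choice $\deg C = 4$, $\deg S = 1$, $n = I_p(S,C) = 4$, and type $k = 2$.

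Next, I would construct the configuration in $\PPP^2$: an irreducible quartic $C$ with an $A_2$-cusp at $s$ and a second singularity of type $A_{k'}$ ($2 \leq k' \leq 4$; the simplest choice being another $A_2$-cusp, compatible with the genus budget $p_a(C) = 3$) at $t$, a smooth hyperflex $p \in C$ with tangent line $S$ (so $I_p(S,C) = 4$), and a fourth point $q \in C$ chosen so that the line $L = \overline{pq}$ meets $s$ transversally to the cuspidal tangent there (giving $L \cap C = \{p,q,s\}$ with multiplicities $1,1,2$) while the line $T = \overline{qt}$ coincides with the unique tangent of $C$ at $t$ (giving $T \cap C = \{q,t\}$ with multiplicities $1,3$). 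Existence of such a quartic follows from a dimension count: bicuspidal quartics with a hyperflex form at least an $8$-dimensional subfamily of $\PPP^{14}$, and the auxiliary incidence $L \cap T \in C$ is one further condition, leaving ample freedom.

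Given this configuration, Lemma~\ref{lemma:P2GivesRightSituationInF1} applied to the blow-up $\sigma: \FFF_1 \to \PPP^2$ of $q$ produces a tangent $3$-configuration $\mathcal{C} = (\tilde C, \tilde S, \hat s, \hat p)_1$ equipped with information $[15,\, 1,\, 2,\, 4;\, 1]$, the type being $2$ because $\hat s$ inherits the $A_2$-singularity from $s$. Since $\tilde C^2 = 15 = 6 \cdot 4 - 9 \cdot 1$, Lemma~\ref{lemma:aCollectionGivesSpecialCollection} applies to a transversal $\mathcal{C}$-chain of $n = 4$ links $\pi: \FFF_1 \dashrightarrow \FFF_3$, yielding $\pi_*(\mathcal{C}) = (C', S_-, s', p')_3$ with $C' \sim 3S_+$, and Lemma~\ref{lemma:3SectionAfterChainOfnLinks} shows this configuration is of type $2n + k = 10$, so $s'$ is an $A_{10}$-singularity of $C'$. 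The strict transform $\tilde T$ is a fiber of $\FFF_1$ distinct from $\tilde L$, and $\pi$ restricts to an isomorphism outside $\tilde L$, so $\tilde T$ is sent to a fiber $T' \subset \FFF_3$ whose intersection with $C'$ is a single point $t'$ satisfying $m_{t'}(C') = 2$ with unique tangent direction equal to $T'$ (inherited from the cuspidal tangent at $t$). After normalizing coordinates via Lemma~\ref{lemma:TwoPointsCanBeChosenAsWished} so that $T'$ becomes the fiber $y_1 = 0$ and $t' = [0{:}1;1{:}0]$, condition~\ref{item:PolynomialVsDivisor--b} of case~\ref{item:PolynomialVsDivisor--B} in Lemma~\ref{lemma:PolynomialVsDivisor} is satisfied.

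Finally,~\ref{item:PolynomialVsDivisor--DP} of Lemma~\ref{lemma:PolynomialVsDivisor} furnishes a polynomial $F$ of bidegree $(3, 3m' - 2) = (3, 7)$ with an $A_{10}$-singularity, and Corollary~\ref{corollary:ReducibleDivisorsNotInteresting}---which caps reducible polynomials of this bidegree at $A_{4m - 1 - r} = A_9$---forces $F$ to be irreducible. The main obstacle is the construction (or rigorous existence proof) of the quartic $C$ of the second paragraph with the prescribed cusps, hyperflex, and incidence $L \cap T \in C$ all realized simultaneously; everything else is bookkeeping with the machinery already built in Sections~\ref{section:PolynomialVsDivisor} and~\ref{section:ToBe--Recipe}.
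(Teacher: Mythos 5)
Your strategy coincides with the paper's: the numerics you derive ($\deg C=4$, $\deg S=1$, $n=I_p(S,C)=4$, type $k=2$, case~\ref{item:PolynomialVsDivisor--B} realized by a second double point whose unique tangent direction becomes a fiber) are exactly those of the configuration the paper uses, and your subsequent chain of applications of Lemmas~\ref{lemma:P2GivesRightSituationInF1}, \ref{lemma:aCollectionGivesSpecialCollection}, \ref{lemma:InformationOfTangentAConfiguration} and~\ref{lemma:PolynomialVsDivisor}, plus Corollary~\ref{corollary:ReducibleDivisorsNotInteresting} for irreducibility, is carried out correctly. The genuine gap is the step you yourself flag: the existence of the quartic. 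A count of the form ``the conditions have total codimension $5+1$ in $\PPP^{14}$, hence the locus is at least $8$-dimensional'' bounds the dimension from below only \emph{if the locus is nonempty}; it does not prove nonemptiness. The conditions ``two cusps'', ``a hyperflex'', and ``the residual point $q$ of $L=\overline{ps}$ lies on the cuspidal tangent at $t$'' are not intersections of divisors in $\PPP^{14}$ but projections of incidence loci, and the last one is only even defined on the stratum where $p,s,t,q$ exist; such expected-dimension arguments can and do fail (compare: the ``expected'' $6$-dimensional family of four-cuspidal quartics is empty). So as written the argument does not establish the lemma.

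The paper closes exactly this gap by writing the curve down: $C\colon y^2x^2+y(x^3+3x^2z+xz^2+z^3)+z^4=0$ with $S\colon y=0$, $p=[1:0:0]$, $q=[0:1:-1]$, $s=[1:1:-1]$, $T\colon x=0$, $t=[0:1:0]$ --- a bicuspidal quartic with a hyperflex at $p$ realizing precisely the incidences you prescribe --- and verifies the hypotheses of Lemma~\ref{lemma:P2GivesRightSituationInF1} by direct substitution ($F(x,0,z)=z^4$ gives $S\cap C=\{p\}$ with $I_p(S,C)=4$; the change of coordinates $[x:y:z]\mapsto[x+y:y:z-y]$ exhibits the cusp at $s$; the coefficient of $y^2$ gives $m_t(C)=2$ with unique tangent $x=0$). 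To complete your proof you must either produce such an explicit equation or replace the codimension estimate by an actual nonemptiness argument for the relevant incidence variety.
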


\begin{proof}
  Let us consider the following configuration in $\PPP^2$:
  \begin{center}
    \begin{tabular}{>{$}l<{$}  >{$}l<{$} }
      L: y+z=0, & q=[0:1:-1],\\
      S: y=0, & p=[1:0:0],\\
      C: y^2x^2+y(x^3+3x^2z+xz^2+z^3)+z^4=0, & s=[1:1:-1],\\
      T: x=0, & t=[0:1:0].
    \end{tabular}\begin{tikzpicture}[xscale=0.5,yscale=0.3,baseline=(b.base)]
    \draw (0,-1) -- (0, 7);
    \draw (1,-1) -- (-4, 1);
    \draw (-4,0) -- (1, 7);
    \node (b) at (0,2){};

    \draw (0,3) ..controls (-0.5,3.2) and (-1.1, 4).. (-1.9,3);
    \draw (-1.9,3) ..controls (-1.1,4) and (-2, 6).. (1,5.5);

    \draw (0,3) ..controls +(-1.5,-1.2) and +(-1.7,-1.1).. (1,-0.5);

    \draw (-2.2,-0.5) node{\scriptsize{$S$}};
    \draw (-3,2.1) node{\scriptsize{$T$}};
    \draw (-2.2,3.2) node{\scriptsize{$t$}};
    \draw (-1.5,5) node{\scriptsize{$C$}};
    \draw (0.2,5.2) node{\scriptsize{$q$}};
    \draw (0.2,3) node{\scriptsize{$s$}};
    \draw (0.2,1.5) node{\scriptsize{$L$}};
    \draw (0.2,-1.2) node{\scriptsize{$p$}};
    \end{tikzpicture}
  \end{center}
  We want to check that all assumptions of Lemma~\ref{lemma:P2GivesRightSituationInF1} are satisfied.
  First, note that $L$ is the line going through $p$ and $q$. Inserting the parametrisation of $S$ into $C$, we see that $S$ and $C$ intersect only at $p$, and $C$ is smooth at $p$, giving~\ref{item:P2GivesRightSituationInF1--smooth} and~\ref{item:P2GivesRightSituationInF1--mqC}.
  So we have \[n=I_p(S,C)=\deg C\cdot\deg S=4=3\deg S+\deg C-3,\] which is \ref{item:P2GivesRightSituationInF1--Ip}.
  Clearly, $S$ does not meet $q$ (implying~\ref{item:P2GivesRightSituationInF1--mqS}) so it does not contain any line going through $q$. If $C$ would contain a line going through $q$, then it had to be $L$ (otherwise, $C$ and $S$ would intersect also in a point distinct from $p$).
  Inserting the parametrisation of $L$ into $C$ yields a non-zero polynomial. Hence, $C$ does not contain a line going through $q$.

  Therefore, Lemma~\ref{lemma:P2GivesRightSituationInF1} gives us a tangent $3$-configuration $\mathcal{C}=(\tilde C,\tilde S,\hat s,\hat p)_1$ satisfying $\tilde C^2=6n-9\tilde S^2$ and equipped with information $[15,1,\bullet,4;1]$.
  Now, let us see that $\mathcal{C}$ is of type $2$:
  Using the change of coordinates $[x:y:z]\mapsto[x+y:y:z-y]$, which sends $[0:1:0]$ onto $s$, one sees that $C$ with changed coordinates has a cusp at $[0:1:0]$, hence $s$ is an $A_2$-singularity of $C$.

  Applying Lemma~\ref{lemma:aCollectionGivesSpecialCollection} on a transversal $\mathcal{C}$-chain of $n=4$ links, we get a disjoint $3$-configuration $\mathcal{C}'=(C',S_-,s',p')_{m'}$, where $m'=4-1=3$ and $C'\sim 3S_+$.
  By Lemma~\ref{lemma:InformationOfTangentAConfiguration}, $\mathcal{C}'$ is of type $2n+2=10$.

  The largest power of $y$ in the polynomial of $C$ is $y^2$ with unique tangent direction $x=0$, which corresponds to $T$.
  Hence, $m_t(C)=2$ and $T$ is the unique tangent direction to $C$ at $t$, and so there is a fiber $T'\subset\FFF_{m'}$ containing a point $t$ that is the only tangent direction to $C'$ at a point $t'$, which is case~\ref{item:PolynomialVsDivisor--B} of Lemma~\ref{lemma:PolynomialVsDivisor}.

  Using all the results we have collected, we can apply Lemma~\ref{lemma:PolynomialVsDivisor} onto $C'$, which asserts the existence of a polynomial of bidegree $(3,3m'-2)=(3,7)$ with a singularity of type $A_{10}$.
  Moreover, this polynomial is irreducible by Corollary~\ref{corollary:ReducibleDivisorsNotInteresting}.
\end{proof}

\begin{lemma}
  There exists an irreducible polynomial of bidegree $(3,8)$ with a singularity of type $A_{12}$.
\end{lemma}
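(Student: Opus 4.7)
The plan is to mimic the constructions of the two preceding lemmas by exhibiting curves $L,S,C,T\subset\PPP^2$ and points $p,q,s,t$ satisfying the hypotheses of Lemma~\ref{lemma:P2GivesRightSituationInF1} with $\deg S=1$ and $\deg C=4$. With these degrees the formulas in that lemma give $n=I_p(S,C)=3\deg S+\deg C-3=4$, $\tilde S^2=2\deg S-1=1$, and $\tilde C^2=6\deg C-9=15$, while condition~\ref{item:P2GivesRightSituationInF1--mqC} forces $m_q(C)=1$, i.e.\ $q$ must be a smooth point of $C$. To reach $K=12$ I arrange $s\in L$ to be an $A_4$-singularity of $C$, so that the resulting tangent $3$-configuration $\mathcal{C}=(\tilde C,\tilde S,\hat s,\hat p)_1$ is of type $k=4$ and is equipped with information $[15,1,4,4;1]$.

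Applying Lemma~\ref{lemma:aCollectionGivesSpecialCollection} to a transversal $\mathcal{C}$-chain $\pi$ of $n=4$ links and invoking Lemma~\ref{lemma:InformationOfTangentAConfiguration} then yields a disjoint $3$-configuration $\mathcal{C}'=\pi_*(\mathcal{C})=(C',S_-,s',p')_{m'}$ on $\FFF_{m'}$ with $m'=n-\tilde S^2=3$, $C'\sim 3S_+$, and of type $2n+k=12$; in particular $C'$ has an $A_{12}$-singularity at $s'$.

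To push the bidegree from $(3,9)=(3,3m')$ down to $(3,8)=(3,3m'-1)$ I additionally impose the existence of a line $T\subset\PPP^2$ through $q$, distinct from $L$ and not passing through $p$, which is transversal to $C$ at $q$ and has an inflectional contact $I_t(T,C)=3$ at some smooth point $t\neq q$. The strict transform $\tilde T\subset\FFF_1$ is then a fiber meeting $\tilde C$ only at $\hat t$ with multiplicity $3$, and this property is preserved by $\pi$ since it sends a general fiber to a general fiber. Therefore the corresponding fiber $T'\subset\FFF_3$ meets $C'$ only at $t'$ with $I_{t'}(T',C')=3$, placing us in case~\ref{item:PolynomialVsDivisor--A} of Lemma~\ref{lemma:PolynomialVsDivisor}. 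Part~\ref{item:PolynomialVsDivisor--DP} of that lemma then produces a polynomial of bidegree $(3,8)$ whose $(3,9)$-divisor is $C'$; it has an $A_{12}$-singularity, and is irreducible by Corollary~\ref{corollary:ReducibleDivisorsNotInteresting}.

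The main obstacle is to exhibit an explicit quartic $C$ meeting all these constraints simultaneously. A natural template is $L:y+z=0$, $q=[0:1:-1]$, $p=[1:0:0]$, $S:z=0$; then $I_p(S,C)=4$ forces $C(x,y,0)=\lambda y^4$, so $C=\lambda y^4+z\,Q(x,y,z)$ for a cubic form $Q$ with ten coefficients. Fixing a candidate singular point on $L$ (such as $s=[1:1:-1]$) together with a line $T$ through $q$, and then imposing smoothness of $C$ at $p$ and $q$, the $A_4$-type at $s$, and the inflectional contact of $T$ with $C$ at $t$, one obtains an algebraic system on the coefficients of $Q$, which one expects to admit a solution on dimensional grounds; verifying concretely that such a solution exists and yields the desired configuration is the technical heart of the proof.
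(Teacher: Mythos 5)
Your framework is internally consistent, but it takes a genuinely different route from the paper, and it stops short of a proof. The paper does not use a quartic at all: it takes $\deg S=2$ (the conic $xz-y^2$) and $\deg C=3$ (a cubic of the form $z^3+(xz-y^2)(ax+by+cz)$), so that $n=I_p(S,C)=6$, the configuration is equipped with $[9,3,\bullet,6;1]$, and the required type is only $k=0$ (the line $L$ is merely tangent to $C$ at a smooth point $s$), giving $K=2\cdot 6+0=12$ on $\FFF_3$; the descent to bidegree $(3,8)$ via case~\ref{item:PolynomialVsDivisor--A} is the same as yours. The advantage of the paper's choice of degrees is that the hardest condition, $I_p(S,C)=6$ with $S\cap C=\{p\}$, is built into the shape of the equation (substituting the parametrization $[u^2:uv:v^2]$ of $S$ into $C$ gives $v^6$ identically), leaving three free parameters $a,b,c$ to arrange the type-$0$ tangency and the inflectional line $T$. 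Your variant instead demands a quartic with an $A_4$-singularity on $L$, a hyperflex at $p$ (fourfold contact with the line $S$ meeting $C$ nowhere else), passage through $q$ with $m_q(C)=1$, and a flex tangent through $q$ -- a far more constrained object.

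That last point is where the genuine gap lies. The lemma is an existence statement, and your proof defers precisely the existence of the curve to ``dimensional grounds.'' A parameter count of this kind is not a proof: the imposed conditions (an $A_4$-singularity alone is already codimension roughly $6$ at a fixed point, on top of the four conditions from $I_p(S,C)=4$) need not be independent, and even when the expected dimension is nonnegative the solutions could all be degenerate (reducible quartics, a worse singularity than $A_4$ at $s$, the tangent cone of the $A_4$ falling along $L$ so that $I_s(C,L)>2$ and the configuration is no longer of type $4$, or $C$ acquiring a line through $q$). Until an explicit $C$ is exhibited and these non-degeneracies are verified -- which you correctly identify as ``the technical heart'' but do not carry out -- the argument does not establish the lemma. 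Everything downstream of that point (Lemmas~\ref{lemma:P2GivesRightSituationInF1}, \ref{lemma:aCollectionGivesSpecialCollection}, \ref{lemma:InformationOfTangentAConfiguration}, case~\ref{item:PolynomialVsDivisor--A} of Lemma~\ref{lemma:PolynomialVsDivisor}, and Corollary~\ref{corollary:ReducibleDivisorsNotInteresting} for irreducibility) is applied correctly.
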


\begin{proof}
  Let \[a=-\frac{3}{8}(i\sqrt{3}-1),~b=\frac{1}{8}(3i\sqrt{3}-1),~c=\frac{1}{2}(-3+i\sqrt{3}),~d=-\frac{1}{2}(3+i\sqrt3)\] and consider the following configuration in $\PPP^2$:
  \begin{center}
    \begin{tabular}{>{$}l<{$}  >{$}l<{$} }
      L: y=0, & q=[0:0:1],\\
      S: xz-y^2=0, & p=[1:0:0],\\
      C: z^3+(xz-y^2)(ax+by+cz)=0, & \\
      T: x+d y=0, & t=[-d:1:1].
    \end{tabular}\begin{tikzpicture}[xscale=0.5,yscale=0.3,baseline=(b.base)]
    \draw (0,-1) -- (0, 7);
    \draw (0.5,-1) -- (-3, 6);
    \node (b) at (0,2){};

    \draw (0,0) ..controls ++(170:1) and ++(170: 1).. (0,1.5)
    .. controls ++(-10:1) and ++(-10:1) .. (0,0);
    \draw (1,1.3) ..controls (-0.9,1.3) and (-1,3) .. (-0.5,3.5)
    .. controls (0.3,4.3) and (0.4,6) .. (-2,5)
    .. controls (-2.7,4.5) and (-3,6) ..(-2.5,6.5);

    \draw (-1.2,0.5) node{\scriptsize{$S$}};
    \draw (-0.3,-0.5) node{\scriptsize{$q$}};
    \draw (0.3,2.5) node{\scriptsize{$L$}};
    \draw (0.3,4.5) node{\scriptsize{$s$}};
    \draw (-1.1,5.7) node{\scriptsize{$C$}};
    \draw (-3,5.3) node{\scriptsize{$t$}};
    \draw (-2,3) node{\scriptsize{$T$}};
    \end{tikzpicture}
  \end{center}
  We want to check that the assumptions of Lemma~\ref{lemma:P2GivesRightSituationInF1} are satisfied.
  Note that $L$ is the line going through $p$ and $q$, and $p\in C$ is a smooth point, giving~\ref{item:P2GivesRightSituationInF1--smooth}.
  Clearly, $S$ is an irreducible conic, hence it does not contain any line.
  Since $C$ does not contain $q$ it does not contain any line going through $q$.
  So we also have~\ref{item:P2GivesRightSituationInF1--mqC} and~\ref{item:P2GivesRightSituationInF1--mqS}.

  Inserting the parametrisation of $S$ into $C$, we see that $S$ and $C$ intersect only at $p$ and compute $n=I_p(S,C)=\deg S\cdot \deg C=6$ and $3+3m_q(S)+m_q(C)=3+3=6$, so we have \ref{item:P2GivesRightSituationInF1--Ip}.

  So we apply Lemma~\ref{lemma:P2GivesRightSituationInF1} and get a tangent $3$-configuration $(\tilde C,\tilde S,\hat s,\hat p)_1$ satisfying $\tilde C^2=6n-9\tilde S^2$ and equipped with $[9,3,\bullet,6;1]$.
  Hence, we can apply Lemma~\ref{lemma:aCollectionGivesSpecialCollection} to a transversal $\mathcal{C}$-chain of $6$ links and get a $3$-configuration $\mathcal{C}'=(C',S',s',p')_{m'}$ where $m'=n-\tilde S^2=6-3=3$ and $C'\sim 3S_+\subset\FFF_3$.

  Note that we did not give a point ``$s$'' in the listing of the curves and points of the configuration, but we draw an ``$s$'' in the picture such that $L$ is the tangent to $C$ at this point. Such a point does exist because \[F(x,0,z)=\frac{1-i\sqrt {3}}{24}\, z \left( z\left(i\sqrt {3}+3\right)-3\,x \right) ^{2}.\]%
  Hence, $\mathcal{C}$ is of type $0$, and by Lemma~\ref{lemma:InformationOfTangentAConfiguration}, $\mathcal{C}'$ is of type $12$, because $2n=12$.

  By inserting $x=-d y$ into $C$, we find \[F(-d y,y,z)=(-y+z)^3,\] so $t$ is the only intersection point of $T$ and $C$, so we have $I_t(T,C)=3$ and get therefore a fiber $T'\subset\FFF_{m'}$ containing a point $t'$ with $I_{t'}(T',C')=3$. So we are in case~\ref{item:PolynomialVsDivisor--A} of Lemma~\ref{lemma:PolynomialVsDivisor}.

  Applying Lemma~\ref{lemma:PolynomialVsDivisor} gives the existence of a polynomial of bidegree $(3,3m'-1)=(3,8)$ with a singularity of type $A_{12}$.
  Finally, this polynomial is irreducible because of Corollary~\ref{corollary:ReducibleDivisorsNotInteresting}.
\end{proof}

\begin{lemma}
  There exists an irreducible polynomial of bidegree $(3,10)$ with a singularity of type $A_{15}$.
\end{lemma}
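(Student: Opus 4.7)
The plan is to imitate the template of the preceding lemmas of this section: exhibit a configuration $(L, S, C, T; p, q, s, t)$ of curves and points in $\PPP^2$, blow up $q$ to obtain via Lemma~\ref{lemma:P2GivesRightSituationInF1} a tangent $3$-configuration $\mathcal{C}=(\tilde C,\tilde S,\hat s,\hat p)_1$ on $\FFF_1$, apply a transversal $\mathcal{C}$-chain of $n=I_p(S,C)$ links using Lemmas~\ref{lemma:aCollectionGivesSpecialCollection} and~\ref{lemma:InformationOfTangentAConfiguration}, and conclude by Lemma~\ref{lemma:PolynomialVsDivisor}. Irreducibility will follow automatically from Corollary~\ref{corollary:ReducibleDivisorsNotInteresting}, since the reducible upper bound for bidegree $(3,10)$ is $A_{13}<A_{15}$. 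Writing $10=3\cdot 4-2$, I aim for $m'=4$ and case~(B) of Lemma~\ref{lemma:PolynomialVsDivisor}, which forces me to add a line $T$ through $q$ along which $C$ has a triple point or a double point with unique tangent direction $T$. Combining $m'=n-\tilde S^2$ with $n=3\deg S+\deg C-3$ and $\tilde S^2=2\deg S-1$ forces $\deg S+\deg C=6$, while the singularity identity $K=2n+k=15$ narrows the possibilities to $(\deg S,\deg C,n,k)=(2,4,7,1)$ or $(1,5,5,5)$.

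I would pursue the first option. Concretely, set $q=[0:0:1]$, $p=[1:0:0]$, $L=\{y=0\}$, $S=\{xz-y^2=0\}$, $T=\{x=0\}$, $t=[0:1:0]$, and look for an irreducible quartic $C$ of the form
\[
C:\ (xz-y^2)\,Q_2(x,y,z) + \nu\, y z^3 = 0,
\]
with $Q_2$ a conic and $\nu\in\CCC^*$. Parametrising $S$ via $[u:v]\mapsto[u^2:uv:v^2]$ gives $C|_S=\nu\,u\,v^7$, so $I_p(S,C)=7$ and $I_q(S,C)=1$ automatically (as are $C\ni q$ and $m_q(C)=1$), and $m_p(C)=1$ is a generic open condition on $Q_2$. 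Beyond this I would impose: (i) $Q_2(x,0,z)$ is a nonzero multiple of $(x-az)^2$ together with one linear relation between $\nu$ and $Q_2$ that forces $C$ to acquire an $A_1$-singularity at $s=[a:0:1]\in L$; (ii) in the chart $y=1$ the expansion of $C$ around $t$ has quadratic part a nonzero multiple of $x^2$, yielding the doubled tangent direction $T$ at $t$ needed for case~(b) of~(B). A count (six free parameters in $(Q_2,\nu)$ modulo scaling versus roughly five conditions) suggests such $C$ exists, and one then writes down explicit coefficients as in the previous lemmas.

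Granted this $C$, Lemma~\ref{lemma:P2GivesRightSituationInF1} produces the tangent $3$-configuration $\mathcal{C}$ equipped with information $[15,3,1,7;1]$ and of type $k=1$. A transversal $\mathcal{C}$-chain of $n=7$ links, via Lemmas~\ref{lemma:aCollectionGivesSpecialCollection} and~\ref{lemma:InformationOfTangentAConfiguration}, sends $\mathcal{C}$ to $\mathcal{C}'=(C',S_-,s',p')_4$ with $C'\sim 3S_+$ and of type $2\cdot 7+1=15$. Since $\tilde T\subset\FFF_1$ is a fiber not among the contracted ones, its image $T'\subset\FFF_4$ is a fiber along which $C'$ has multiplicity two at some $t'$ with unique tangent $T'$, realising case~(b) of~(B). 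Applying Lemma~\ref{lemma:TwoPointsCanBeChosenAsWished} to normalise $(s',t')$ and then Lemma~\ref{lemma:PolynomialVsDivisor} gives a polynomial of bidegree $(3,10)$ with an $A_{15}$-singularity, irreducible by Corollary~\ref{corollary:ReducibleDivisorsNotInteresting}. The main obstacle is steps~(i) and~(ii): simultaneously arranging the $A_1$ at $s$ and the doubled tangent $T$ at $t$ boils down to an explicit polynomial-algebra computation analogous to the ones in the earlier lemmas, and the existence of admissible $(Q_2,\nu)$ must be verified by exhibiting specific coefficients.
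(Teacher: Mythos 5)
Your strategy is the same as the paper's: the same template (configuration in $\PPP^2$, blow-up at $q$, Lemma~\ref{lemma:P2GivesRightSituationInF1}, a transversal chain of $n$ links via Lemmas~\ref{lemma:aCollectionGivesSpecialCollection} and~\ref{lemma:InformationOfTangentAConfiguration}, then Lemma~\ref{lemma:PolynomialVsDivisor}\ref{item:PolynomialVsDivisor--B} and Corollary~\ref{corollary:ReducibleDivisorsNotInteresting}), and your numerology $(\deg S,\deg C,n,k)=(2,4,7,1)$ is exactly what the paper realizes. The gap is in the one step you flagged as ``to be verified'': your concrete ansatz is inconsistent, so no admissible $(Q_2,\nu)$ exists. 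Work in the chart $y=1$ and write $Q_2=\alpha x^2+\beta y^2+\gamma z^2+\delta xy+\epsilon yz+\zeta xz$. Then
\[
C(x,1,z)=(xz-1)Q_2(x,1,z)+\nu z^3
\]
has constant term $-\beta$, linear part $-\delta x-\epsilon z$, and quadratic part $-\alpha x^2-\gamma z^2+(\beta-\zeta)xz$. Your condition (ii) (quadratic part at $t=[0:1:0]$ a nonzero multiple of $x^2$, which is indeed what case \ref{item:PolynomialVsDivisor--b} forces, since the only line through $t$ and $q=[0:0:1]$ is $x=0$) therefore requires $\beta=\delta=\epsilon=\gamma=\zeta=0$, i.e.\ $Q_2=\alpha x^2$. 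But then $C\mid_{y=0}=\alpha x^3z$, so $C\cap L=\{p,q\}$ and there is no point $s\in L\setminus\{p,q\}$ at which to place the $A_1$ of condition (i): the root $a$ of $(x-az)^2=x^2$ is $a=0$, i.e.\ $s=q$. After blowing up $q$ the resulting $3$-configuration is of type $0$ (the strict transforms of $C$ and $L$ meet tangentially on the exceptional divisor), and the chain of $7$ links only produces an $A_{14}$. Your parameter count also undercounts: fixing $t=[0:1:0]$ and $T=\{x=0\}$ makes (ii) five conditions, not three, so you have at least seven conditions on six parameters.

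The source of the failure is over-normalization. Once you fix $p=[1:0:0]$, $q=[0:0:1]$ and $S=\{xz-y^2=0\}$, the residual automorphism group is too small to also move the cusp to $[0:1:0]$; the position of $t$ relative to $(S,p,q)$ is a modulus. The paper's proof keeps this freedom by placing $q=[1:0:1]$ on a conic $S$ not in normal form, with the cusp at $t=[0:1:0]$ and tangent $T=\{x-z=0\}$; the node then sits at $s=[0:0:1]$ on $L=\{y=0\}$. To repair your argument, either leave $t$ (equivalently $q$, or the conic $S$) unnormalized and re-solve the system, or simply import the paper's explicit $F$ and $G$.
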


\begin{proof} Let \[F = \left( i-1 \right) {x}^{2}yz+
  \frac{1}{2}{x}^{3}(y-iz)
  + x\,y^2(- {x}+2\,z )
  -{y}^{2}{z}^{2} +
  \frac{1}{2}x{z}^{2}( \left( 1-3\,i \right) y+i{x})
\] and let \[G = x(iy+z)+ \left( 1-i \right) {y}^{2}- \left( 1+3\,i \right) yz-{z}^{2}.\] Consider the following configuration in $\PPP^2$:
  \begin{center}
    \begin{tabular}{>{$}l<{$}  >{$}l<{$} }
      L: y=0 , & q=[1:0:1],\\
      S: G=0, & p=[1:0:0],\\
      C: F=0,& s=[0:0:1],\\
      T: x-z=0, &t=[0:1:0].\\
    \end{tabular}\begin{tikzpicture}[xscale=0.5,yscale=0.3,baseline=(b.base)]
    \draw (0,-1) -- (0, 7);
    \draw (0.5,-1) -- (-3, 6);
    \node (b) at (0,2){};

    \draw (0,0) ..controls ++(170:1) and ++(170: 1).. (0,1.5)
    .. controls ++(-10:1) and ++(-10:1) .. (0,0);
    \draw (1,1.3) ..controls (-0.9,1.3) and (-1,3) .. (-0.5,3.5)
    .. controls (1.4,7) and (1.6,2.7) .. (-2.6,5.2)
    .. controls (-2.6,4) and (-3.5,2) .. (-3,1)
    .. controls (-2.6,0) and (-1,-1) .. (0.3,0.3);

    \draw (-1.2,0.5) node{\scriptsize{$S$}};
    \draw (-0.3,-0.6) node{\scriptsize{$q$}};
    \draw (0.3,3.5) node{\scriptsize{$L$}};
    \draw (0.3,2.2) node{\scriptsize{$p$}};
    \draw (0.3,5.2) node{\scriptsize{$s$}};
    \draw (-1.1,5.7) node{\scriptsize{$C$}};
    \draw (-3,5.3) node{\scriptsize{$t$}};
    \draw (-2,3) node{\scriptsize{$T$}};
    \end{tikzpicture}
  \end{center}
  We want to prove that the assumptions of Lemma~\ref{lemma:P2GivesRightSituationInF1} are satisfied.
  The line going through $p$ and $q$ is $L$.
  For \ref{item:P2GivesRightSituationInF1--smooth} note that $p\in C$ is smooth, since $F$ contains the term $x^3$.
  One can also check that $q\in C$ is a smooth point, so we have $m_q(C)=1=\deg C-3$, which gives \ref{item:P2GivesRightSituationInF1--mqC}.
  The conic given by $G$ is smooth (since $G$ can be written as ${x}{\alpha(y,z)}+\beta(y,z)$ for some $\alpha,\beta\in\CCC[y,z]$), so $q\in S$ is smooth and we have $m_q(S)=1=\deg S-1$. So \ref{item:P2GivesRightSituationInF1--mqS} holds.
  This also implies that $S$ is irreducible and does thus not contain any lines. We still need to prove that $C$ does not contain any line meeting $q$ (which we will do later in the proof), and that~\ref{item:P2GivesRightSituationInF1--Ip} holds.

  To show \ref{item:P2GivesRightSituationInF1--Ip} consider the parametrisation $\varphi:\PPP^2\to\PPP^2$ of $S$, which is given by \[
    \varphi([y:z]) = [\left( -1+i \right) {y}^{2}+ \left( 1+3\,i \right) yz+{z}^
    {2} : y \left( z+iy \right) : z \left( z+iy \right)].
  \]
  Inserting this into $F$ gives \[\left( 1+i \right) y \left( iz-y \right) ^{7},\] hence $S$ and $C$ intersect at two points: at $[0:1]$ with local intersection $1$, and at $[i:1]$ with local intersection $7$.
  Note that $\varphi([i:1])=[1:0:0]=p$.
  Therefore, $I_p(C,S)=7$ and so we have $3\deg S +\deg C-3=7=I_p(S,C)$, implying \ref{item:P2GivesRightSituationInF1--Ip}.

  We show that $C$ has a node at $s=[0:0:1]$.
  Since $z^4$ and $z^3$ do not appear in $F$, the quartic $C$ has a singular point at $s$.
  It is a node, because the coefficient of $z^2$ is \[
    1/2 \left( -2\,{y}^{2}+xy(1-3i)+i{x}^{2} \right),
  \] which has discriminant $-2+\frac{i}{2}\neq 0$.
  Hence, it has two distinct roots, and $s$ is an $A_1$-singularity of $C$.

  Note that $t$ is a singular point of $C$, since $y^3$ and $y^4$ do not appear in $F$.
  Moreover, the coefficient of $y^2$ is $- \left( x-z \right) ^{2}$, which has the unique tangent direction $T$ and is hence a cusp.

  Instead of proving that $C$ does not contain any line meeting $q$, we are now ready to prove that $C$ is irreducible, which is a stronger statement.
  Since we know that $t\in C$ is a cusp, this singularity needs to come from an irreducible component $C_1$. Hence, this component needs to be of degree $3$ or $4$.
  If it is of degree $4$, we are done.
  So assume that it is of degree $3$.
  Then, $C=C_1+C_2$, where $C_2$ is irreducible and of degree $1$.
  Having $I_p(S,C_1)\leq 6$, we achieve $7=I_p(S,C)=I_p(S,C_1)+I_p(S,C_2)$ only if $I_p(S,C_2)\geq1$. Hence, $p\in C_2\,\cap\, C_1$ and so $p$ is a singular point of $C$. This is a contradiction to~\ref{item:P2GivesRightSituationInF1--smooth} and therefore, $C$ is irreducible.

  We have now proven that all assumptions of Lemma~\ref{lemma:P2GivesRightSituationInF1} are satisfied and hence $(\tilde C,\tilde S,\hat s,\hat p)_1$ is a tangent $3$-configuration that satisfies $\tilde C^2=6n-9\tilde S^2$ and is equipped with $[15,3,1,7;1]$.
  Applying Lemma~\ref{lemma:aCollectionGivesSpecialCollection} on a transversal $\mathcal{C}$-chain of $7$ links, we get that there is a disjoint $3$-configuration $\mathcal{C}'=(C',S_-,s',p')_{m'}$ where $m'=n-\tilde S^2=4$ and $C'\sim S_+$.
  By Lemma~\ref{lemma:InformationOfTangentAConfiguration}, $\mathcal{C}'$ is of type $15$, because $2n+1=15$.
  Moreover, there is a fiber $T'$ containing a point $t'$ with $m_{t'}(C')=2$ and such that $T'$ is the only tangent direction to $C'$ at $t'$.
  Therefore, $C'$ satisfies~\ref{item:PolynomialVsDivisor--B} of Lemma~\ref{lemma:PolynomialVsDivisor}.
  By applying Lemma~\ref{lemma:PolynomialVsDivisor} we get a polynomial of bidegree $(3,3m'-2)=(3,10)$ that has a singularity of type $A_{15}$. This polynomial is irreducible by Corollary~\ref{corollary:ReducibleDivisorsNotInteresting}.
\end{proof}

\begin{lemma}
  There exists an irreducible polynomial of bidegree $(3,11)$ with a singularity of type $A_{17}$.
\end{lemma}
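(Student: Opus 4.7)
The plan is to follow the same recipe used in the preceding lemmas of Section~\ref{section:ToBe--Ingredients}. We want to exhibit two curves $S$ and $C$ in $\PPP^2$, two distinct points $p,q$ with connecting line $L$, and a further line $T$ through $q$, such that the hypotheses of Lemma~\ref{lemma:P2GivesRightSituationInF1} hold and, after an appropriate transversal chain of links, we land on an irreducible divisor $C'\sim 3S_+$ on $\FFF_{m'}$ whose $(3,3m')$-polynomial is of bidegree $(3,11)$ and carries an $A_{17}$-singularity.

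Since $11=3\cdot 4-1$, I will aim at $m'=4$ together with case~\ref{item:PolynomialVsDivisor--A} of Lemma~\ref{lemma:PolynomialVsDivisor} (so $r=1$). By Lemma~\ref{lemma:aCollectionGivesSpecialCollection} and the information $[6\deg C-9,2\deg S-1,\bullet,n;1]$ produced by Lemma~\ref{lemma:P2GivesRightSituationInF1}, the equalities $m'=n-\tilde S^2$ and $n=3\deg S+\deg C-3$ force $\deg C=6-\deg S$. The target singularity being $A_{17}$, Lemma~\ref{lemma:3SectionAfterChainOfnLinks} requires the initial $3$-configuration to be of type $k=17-2n$, leaving three natural choices:
\[
(\deg S,\deg C,n,k)\in\{(1,5,5,7),\;(2,4,7,3),\;(3,3,9,-1)\}.
\]
I will pursue the middle choice $(2,4,7,3)$, in direct analogy with the $(3,10)$-case: an irreducible conic $S$ and a quartic $C$ meeting at a single point $p$ with $I_p(S,C)=7$, where $C$ carries an $A_3$-singularity at a point $s\in L$ (so that the $3$-configuration on $\FFF_1$ has type $3$), and where a second line $T$ through $q$ meets $C$ only at a point $t$ with $I_t(T,C)=3$.

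The concrete steps will be: (i) write down explicit polynomials $G$ of degree $2$ and $F$ of degree $4$, with coefficients likely in $\QQ(i)$ or $\QQ(\sqrt{-3})$, adjusted so that plugging a rational parametrisation of $S=V(G)$ into $F$ produces a constant multiple of $(\alpha y+\beta z)^{7}$, thereby realising $I_p(S,C)=7$ at a single point $p$; (ii) verify conditions~\ref{item:P2GivesRightSituationInF1--smooth}--\ref{item:P2GivesRightSituationInF1--mqS} of Lemma~\ref{lemma:P2GivesRightSituationInF1} by checking that $q\notin S\cup C$ (so the multiplicities at $q$ vanish) and that $p$ is smooth on $C$; (iii) choose the coefficients so that $F$ has an ordinary cusp at $s$ with the line $L$ as its tangent line, then use the local expansion to confirm $s$ is of type $A_3$, e.g.\ after a suitable linear change of variables sending $(L,s)$ to $(\{y=0\},(0,0))$; (iv) verify that $F|_{T}$ is a constant multiple of a cube, forcing $T\cap C=\{t\}$ and $I_t(T,C)=3$; (v) show that $C$ is irreducible (the presence of a cusp at $s$ already restricts the possible decompositions, and any linear or conic component would conflict with the computed $I_p(S,C)=7$ and smoothness of $C$ at $p$, as in the $(3,10)$ argument); (vi) assemble the result by invoking Lemma~\ref{lemma:P2GivesRightSituationInF1} to obtain the tangent $3$-configuration $(\tilde C,\tilde S,\hat s,\hat p)_{1}$ with information $[15,3,3,7;1]$, Lemma~\ref{lemma:aCollectionGivesSpecialCollection} and Lemma~\ref{lemma:InformationOfTangentAConfiguration} to produce a disjoint $3$-configuration of type $2\cdot 7+3=17$ on $\FFF_{4}$ whose underlying $3$-section satisfies case~\ref{item:PolynomialVsDivisor--A} of Lemma~\ref{lemma:PolynomialVsDivisor} (thanks to $T$), and finally Lemma~\ref{lemma:PolynomialVsDivisor}\ref{item:PolynomialVsDivisor--DP} together with Corollary~\ref{corollary:ReducibleDivisorsNotInteresting} to produce an irreducible polynomial of bidegree $(3,11)$ with an $A_{17}$-singularity.

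The main obstacle is purely computational: one must find coefficients simultaneously realising three constraints on the same quartic $F$, namely the seventh-order tangency with the chosen conic $G$ at $p$, the $A_3$-singularity of $F$ at $s$ tangent to $L$, and the triple contact with $T$ at $t$. This is a genuinely overdetermined system that typically admits only isolated solutions and may well require passing to a quadratic extension of $\QQ$, exactly as in the $(3,8)$ and $(3,10)$ examples; once an explicit solution is written down, the verifications (i)--(vi) reduce to direct substitutions.
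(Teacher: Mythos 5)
Your overall strategy is the paper's recipe, but the paper instantiates it differently: it takes $\deg S=\deg C=3$ with $m_q(S)=2$, $m_q(C)=0$, $n=I_p(S,C)=9$, and a configuration of type $-1$ (the line $L$ meets $C$ in three distinct points), giving $2\cdot 9-1=17$; you propose $\deg S=2$, $\deg C=4$, $n=7$, type $3$. Your degree bookkeeping ($\deg S+\deg C=6$, $k=17-2n$) is correct, and the $(2,4,7,3)$ branch is not a priori hopeless, but as written your verification plan would fail at several points. First, with $\deg S=2$ and $\deg C=4$, conditions \ref{item:P2GivesRightSituationInF1--mqC} and \ref{item:P2GivesRightSituationInF1--mqS} of Lemma~\ref{lemma:P2GivesRightSituationInF1} read $m_q(C)=\deg C-3=1$ and $m_q(S)=\deg S-1=1$: the point $q$ \emph{must} lie on both $S$ and $C$ as a smooth point (exactly as in the paper's $(3,10)$ example with the same degrees). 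Your step (ii), which checks $q\notin S\cup C$ ``so the multiplicities at $q$ vanish'', contradicts the hypotheses you need. Relatedly, the pullback of the quartic $F$ along a parametrisation of the conic is a binary form of degree $8$, so it cannot be a constant multiple of $(\alpha y+\beta z)^7$; it must be $(\text{linear})^7\cdot(\text{linear})$, with the extra linear factor accounting for $I_q(S,C)=1$.

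Second, step (iii) is internally inconsistent and geometrically impossible as stated. An ordinary cusp is an $A_2$, not an $A_3$; you need a tacnode at $s$. More seriously, for the resulting $3$-configuration on $\FFF_1$ to be of type $3$ you need $I_s(C,L)=2$ (since $C\cdot L=4$ and $p,q\in C\cap L$ already absorb $2$), whereas if $L$ were the tangent line of an $A_3$ at $s$ then $I_s(C,L)\geq 4$, violating B\'ezout on $L$. So $L$ must pass through $s$ while \emph{avoiding} the unique tangent direction of the tacnode --- the opposite of what you prescribe. Finally, the lemma is an existence statement whose entire content is the explicit example: deferring the construction of $F$ and $G$ to ``a genuinely overdetermined system that typically admits only isolated solutions'' leaves the existence unproven, and given the errors above the system you would actually write down is not the right one. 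The paper avoids this by exhibiting explicit cubics $F$, $G$ (over $\QQ(i\sqrt3)$) and verifying the five conditions by direct substitution.
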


\begin{proof}
  Let $\omega=i\sqrt{3}$ and let \begin{align*}
    F = & {z}^{3}+
    \frac{3}{8}\left(\omega+3\right)\left(y-x\right)z^2+
    \frac{9}{8}\left( {\frac {\omega}{2}}+{1} \right) {x}^{2}z+
    \frac{3}{64}\left( -{7\,\omega}-{3} \right) xyz\\
    & + \frac{3}{64}\left( {5\,\omega}-{3} \right) {y}^{2}z+
    \frac{3}{32}\left( -{\frac {5\,\omega}{2}}-{3} \right) {x}^{3}+
    \frac{9}{32}\left( {\frac {\omega}{2}}-{1} \right) {x}^{2}y.
 \end{align*}
 Consider the following configuration in $\PPP^2$: \begin{center}
   \begin{tabular}{>{$}l<{$}  >{$}l<{$} }
     L: x=0, & q=[0:0:1],\\
     S: \left( -2\,x+{y} \right) yz+ \frac{1}{4}\left( \omega+{3} \right) {x}^ {2}y-{x}^{3}=0, & p=[0:1:0].\\
     C: F=0, & \\
     T: x+y=0, &
   \end{tabular}\begin{tikzpicture}[xscale=0.5,yscale=0.3,baseline=(b.base)]
   \draw (0,-1) -- (0, 7);
   \draw (0.7,-1.2) -- (-4, 6);
   \node (b) at (0,2){};

   \draw (1,2) .. controls (-1.8,4) and (-0.1,0.5) ..(0,-0.1)
   .. controls (0.6,-2.5) and (1,0.5) ..(0,-0.1)
   .. controls (-0.8,0) and (-1.2,-0.3) ..(-2,-1);

   \draw (1,2.3) ..controls (-0.9,2.3) and (-1,4) .. (1,4.5)
   .. controls (2,5) and (0.4,7) .. (-2,5)
   .. controls (-3.1,3.5) and (-4,6) ..(-3.5,6.5);

   \draw (-1.5,0) node{\scriptsize{$S$}};
   \draw (-0.3,-0.6) node{\scriptsize{$q$}};
   \draw (0.2,1.3) node{\scriptsize{$L$}};
   \draw (-1.1,6.1) node{\scriptsize{$C$}};
   \draw (-3.7,4.8) node{\scriptsize{$t$}};
   \draw (-2.5,2.9) node{\scriptsize{$T$}};
   \end{tikzpicture}
 \end{center}
 We check that the assumptions of Lemma~\ref{lemma:P2GivesRightSituationInF1} are satisfied.
 First of all, $L$ is the line meeting $p$ and $q$.
 For \ref{item:P2GivesRightSituationInF1--smooth} we note that $y^3$ does not appear in $F$, but $y^2$ does.
 So $p$ is a smooth point of $C$.

 For \ref{item:P2GivesRightSituationInF1--mqC} we have $m_q(C)=0$, since $z^3$ appears in $F$.
 Hence \ref{item:P2GivesRightSituationInF1--mqC} holds.
 We see that $m_q(S)=2$, so also \ref{item:P2GivesRightSituationInF1--mqS} holds.

 Since the polynomial defining $S$ can be written in the form $z\alpha(x,y)+\beta(x,y)$ for some $\alpha,\beta\in\CCC[x,y]$, $S$ is irreducible and does therefore contain no lines.

 For \ref{item:P2GivesRightSituationInF1--Ip} we need to know what $n=I_p(S,C)$ is.
 By plugging the parametrization of $S$ into $F$, we find (with the help of a computer algebra program) that $p$ is the only intersection point of $S$ and $C$, hence $n=I_p(S,C)=3\cdot3=9$ and we find $3\deg S+\deg C-3=9=n$.

 Now, we can prove that $C$ does not contain any line meeting $q$. If it would, then the line needs to be $L$ (otherwise, $S$ and $C$ would meet also in a point distinct from $p$). We insert $x=0$ into $C$ and see that it is not the zero polynomial. Therefore, $C$ does not contain $L$ and so does not contain any line meeting $q$.

 We have shown that the assumptions of Lemma~\ref{lemma:P2GivesRightSituationInF1} are satisfied. The lemma implies that $\mathcal{C}=(\tilde C,\tilde S,\hat s,\hat p)_1$ is a tangent $3$-configuration that satisfies $\tilde C^2=6n-9\tilde S^2$ and is equipped with $[9,5,\bullet,9;1]$.
 Note that the line $L$ intersects $C$ besides $p$ at two more points, because the discriminant of $F(0,1,z)$ divided by $z$ is $\frac{3}{32}(15-\omega)\neq 0$. Hence $\mathcal{C}$ is of type $-1$.
 So we can apply Lemma~\ref{lemma:aCollectionGivesSpecialCollection} on a transversal $\mathcal{C}$-chain of $9$ links and get a disjoint $3$-configuration $\mathcal{C}'=(C',S_-,s',p')_{m'}$, where $m'=n-\tilde S^2=4$ and $C'\sim 3S_+$.
 By Lemma~\ref{lemma:InformationOfTangentAConfiguration}, $\mathcal{C}'$ is of type $17$, because $2n-1=17$.

 Now remark that the line $T$ intersects $C$ at only one point, because \[F(-y,y,z)=-\frac{\omega}{72}(-3y+(\omega -3)z)^3.\]%
 So there is a fiber $T'$ containing a point $t'$ with $I_{t'}(T',C')=3$ and we are in case~\ref{item:PolynomialVsDivisor--A} of Lemma~\ref{lemma:PolynomialVsDivisor}.
 Therefore, Lemma~\ref{lemma:PolynomialVsDivisor} implies the existence of a polynomial of bidegree $(3,3m'-1)=(3,11)$ with a singularity of type $A_{17}$. This polynomial is irreducible by Corollary~\ref{corollary:ReducibleDivisorsNotInteresting}.
\end{proof}
\begin{lemma}
  There exists an irreducible polynomial of bidegree $(3,12)$ with a singularity of type $A_{18}$.
\end{lemma}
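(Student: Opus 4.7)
The plan is to imitate the construction of the preceding lemma (the $(3,11)$, $A_{17}$ case) but to arrange the configuration so that the $(3,am)$-divisor produced by the recipe lies in case~$r=0$ of Lemma~\ref{lemma:PolynomialVsDivisor} and is of type~$0$ rather than type~$-1$. Concretely, I look for two cubics $C,S\subset\PPP^{2}$, a point $q$ to be blown up, and a point $p\neq q$ such that, writing $L$ for the line through $p$ and $q$, \begin{enumerate}
  \item $C$ is smooth at $p$, does not pass through $q$ (so $m_q(C)=0=\deg C-3$), and does not contain $L$,
  \item $S$ passes through $p$, is singular at $q$ with $m_q(S)=2=\deg S-1$ (hence $S$ is an irreducible cuspidal or nodal cubic and contains no line),
  \item $I_p(S,C)=9=3\deg S+\deg C-3$, i.e.\ $S\cap C=\{p\}$ (maximal tangency, saturating B\'ezout),
  \item there is a point $s\in L\setminus\{p,q\}$ at which $L$ is tangent to $C$, while $I_p(L,C)=1$.
\end{enumerate}
Once such a configuration is exhibited, assumptions~\ref{item:P2GivesRightSituationInF1--smooth}--\ref{item:P2GivesRightSituationInF1--Ip} of Lemma~\ref{lemma:P2GivesRightSituationInF1} hold with $n=9$, so the blow-up at $q$ produces a tangent $3$-configuration $\mathcal{C}=(\tilde C,\tilde S,\hat s,\hat p)_{1}$ in $\FFF_{1}$ with information $[\,9,5,0,9;1\,]$, the type being $0$ thanks to condition~(4).

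Next I apply Lemma~\ref{lemma:aCollectionGivesSpecialCollection} to a transversal $\mathcal{C}$-chain $\pi\colon \FFF_{1}\dashrightarrow \FFF_{m'}$ of $n=9$ links. Since $\tilde S^{2}=2\deg S-1=5$, this gives $m'=n-\tilde S^{2}=4$ and $\pi_{*}(\mathcal{C})=(C',S_{-},s',p')_{4}$ with $C'\sim 3S_{+}$ in $\FFF_{4}$. By Lemma~\ref{lemma:3SectionAfterChainOfnLinks} / Lemma~\ref{lemma:InformationOfTangentAConfiguration}, $\mathcal{C}'$ is of type $2n+0=18$, i.e.\ $C'$ has an $A_{18}$-singularity at $s'$. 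As no extra fiber condition is being imposed (no auxiliary line $T$), we fall in the ``$r=0$'' regime of Lemma~\ref{lemma:PolynomialVsDivisor},\ref{item:PolynomialVsDivisor--DP}, which therefore produces a polynomial of bidegree $(3,3m')=(3,12)$ whose $(3,12)$-divisor is $C'$ and which has an $A_{18}$-singularity. Irreducibility is automatic from Corollary~\ref{corollary:ReducibleDivisorsNotInteresting}, since the reducible upper bound at $b=12$ is $A_{15}<A_{18}$.

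The genuinely hard step is exhibiting explicit cubics satisfying (1)--(4). Condition~(3) is the binding one: two cubics meeting at a single point with multiplicity $9$ is a codimension-$8$ condition, so the space of pairs $(C,S)$ is already small, and one must in addition place an honest flex/tangent-line situation on $L$ to arrange~(4). A workable strategy is to fix $S$ as a cuspidal cubic with cusp at $q$ and a chosen smooth branch through $p$; parametrise $S$ by $\varphi\colon \PPP^{1}\to S$ (as in the preceding lemma) and write the defining polynomial $F$ of $C$ with undetermined coefficients. Imposing that $F\circ\varphi$ vanish to order $9$ at the point mapping to $p$ is a linear system in the coefficients of $F$; the remaining freedom is then used to force condition~(4) together with $C$ being smooth at $p$ and avoiding $q$. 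A concrete solution can be found by the same computer-algebra procedure used for the $(3,11)$ case, and the resulting $F$ will involve similar quadratic-irrational coefficients. Once $F$ is written down, verifications~(1), (2), (4) and the non-containment of $L$ in $C$ reduce to direct substitutions, exactly as in the previous lemmas.
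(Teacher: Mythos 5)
Your overall plan coincides with the paper's: blow up a point $q\in\PPP^2$ to reach $\FFF_1$, feed a tangent $3$-configuration with data $[9,5,0,9;1]$ into Lemma~\ref{lemma:aCollectionGivesSpecialCollection} with $n=9$, land in $\FFF_4$ with a $3S_+$-divisor of type $2\cdot 9+0=18$, and conclude via Lemma~\ref{lemma:PolynomialVsDivisor} and Corollary~\ref{corollary:ReducibleDivisorsNotInteresting}. All of that numerology is right. The gap is that the entire content of the lemma is the existence of the plane configuration satisfying your conditions (1)--(4), which you do not exhibit, and the concrete route you propose for producing it does not work. If you fix $S$ to be a \emph{cuspidal} cubic with cusp at $q$, then (after normalizing $S=V(x^2z-y^3)$, $q=[0:0:1]$) the argument of Lemma~\ref{lemma:NonExistenceOfCuspidalCubicWithSingularCubic} forces the unique intersection point to be $p=[1:0:0]$ and forces $C=V\bigl(z^3+\lambda(x^2z-y^3)\bigr)$ for some $\lambda\neq 0$: the condition $I_p(S,C)=9$ pins $C$ down completely up to the one scalar $\lambda$, leaving no freedom to impose your condition (4). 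Indeed the line $L$ through $p$ and $q$ is $y=0$, and $C\mid_{L}$ is $z(z^2+\lambda x^2)$, which has three distinct roots for every $\lambda\neq0$; so the resulting $3$-configuration is necessarily of type $-1$, and the chain of $9$ links yields only an $A_{17}$, not an $A_{18}$.

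The paper avoids this by taking $S$ \emph{nodal} at $q$ (the tangent cone of its $G$ at $q$ is $y(x+y)$, two distinct lines). For a nodal cubic the analogue of the computation above (carried out in Lemma~\ref{lemma:NonExistenceNodalCubicWithCollinearSingularCubicInP2}) leaves a genuine one-parameter family $C=V\bigl(H+a(xyz-x^3-y^3)\bigr)$ of cubics meeting $S$ only at $p$, and the free parameter $a$ is exactly what one tunes to make $C\mid_L$ acquire a double root at a third point $s$, i.e.\ to realize type $0$ (in the paper's coordinates, $F(0,1,z)=z(z-3)^2$). So your conditions (1)--(4) are consistent, but only in the nodal case; to repair the proof you should replace ``fix $S$ as a cuspidal cubic'' by the nodal choice and then actually carry out the tangency adjustment, which is the nontrivial computation the paper records explicitly.
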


\begin{proof}
  Let $\omega=i\sqrt{3}$ and let \begin{align*}
    F= &{z}^{3}+ \frac{9}{2}\left( -1+\omega \right) {x}^{3}-9\,y{x}^{2}+9\,z{
    y}^{2}+ 3\left( -\omega+3 \right) xyz-\\
    & 6\,y{z}^{2}-3\,x{z}^{2}+
     \frac{3}{2}\left( -\omega+5 \right) {x}^{2}z, \\
    G= &yz\left( x+{y} \right) + \frac{1}{2}\left( -1+\frac{\omega}{3} \right) {x}^{3}-{x}^{2}y.
  \end{align*}
  Consider the following configuration in $\PPP^2$: \begin{center}
    \begin{tabular}{>{$}l<{$}  >{$}l<{$} }
      L: x=0, & q=[0:0:1],\\
      S: G=0, & p=[0:1:0],\\
      C: F=0, & s=[0:1:3].
    \end{tabular}\begin{tikzpicture}[xscale=0.5,yscale=0.3,baseline=(b.base)]
    \draw (0,-2) -- (0, 6);
    \node (b) at (0,1){};

    \draw (1,1) .. controls (-1.8,2.3) and (-0.1,-0.5) ..(0,-1.1)
    .. controls (0.6,-3.5) and (1,-0.5) ..(0,-1.1)
    .. controls (-0.8,-1) and (-1.2,-1.3) ..(-2,-2);

    \draw (1,1.3) ..controls (-0.9,1) and (-1,2.8) .. (-0.5,3.5)
    .. controls (0.3,4.3) and (0.4,6) .. (-2,5);

    \draw (-1.5,-1) node{\scriptsize{$S$}};
    \draw (-0.3,-1.6) node{\scriptsize{$q$}};
    \draw (0.2,2.3) node{\scriptsize{$L$}};
    \draw (0.2,4.5) node{\scriptsize{$s$}};
    \draw (-1.1,5.9) node{\scriptsize{$C$}};
    \end{tikzpicture}
  \end{center}
  We want to check that the assumptions of Lemma~\ref{lemma:P2GivesRightSituationInF1} are satisfied.
  The line meeting $p$ and $q$ is $L$. Since $G$ can be written as $z\alpha(x,y,)+\beta(x,y)$ for some $\alpha,\beta\in\CCC[x,y]$, $S$ is irreducible and contains therefore no lines.
  One sees that $p$ is a smooth point of $p$, so we have~\ref{item:P2GivesRightSituationInF1--smooth}.
  The multiplicities of $S$ respectively $C$ at $q$ are $m_q(S)=2$ and $m_q(C)=0$.
  So \ref{item:P2GivesRightSituationInF1--mqC} and \ref{item:P2GivesRightSituationInF1--mqS} hold.

  By plugging the parametrization of $S$ into $F$, with the help of a computer algebra program we find that $S$ and $C$ intersect at only one point, namely at $p$.
  Therefore, we have $n=I_p(S,C)=3\cdot 3=9$.
  We compute \ref{item:P2GivesRightSituationInF1--Ip}: $ 3 + 3 m_q(S) + m_q(C) = 3 + 6 = 9 = n $.

  We check now that $C$ does not contain any line meeting $q$. If it would contain such a line, then it would be $L$ (otherwise, $C$ intersects $S$ in a second point). Inserting the parametrisation of $L$ into $F$ gives the polynomial $F(0,y,z)\neq0$. Hence, $C$ does not contain $L$.

  We have proven that the assumptions of Lemma~\ref{lemma:P2GivesRightSituationInF1} are satisfied and get a $3$-configuration $\mathcal{C}=(\tilde C,\tilde S,\hat s,\hat p)_{1}$ that satisfies $\tilde C^2=6n-9\tilde S^2$ and is equipped with $[9,5,\bullet,9;1]$.
  Note that the line $L$ is tangent to $C$ at $s$, as \[F(0,1,z)=z(z-3)^2.\]%
  Hence, $\mathcal{C}$ is of type $0$.
  We apply Lemma~\ref{lemma:aCollectionGivesSpecialCollection} on a transversal $\mathcal{C}$-chain of $9$ links and get a disjoint $3$-configuration $(C',S_-,s',p')_{m'}$ where $m'=n-\tilde S^2=4$ and $C'\sim 3S_+\subset\FFF_4$, which is of type $k=18$ by Lemma~\ref{lemma:InformationOfTangentAConfiguration}.

  Therefore, by applying Lemma~\ref{lemma:PolynomialVsDivisor} there exists a polynomial of bidegree $(3,3m')=(3,12)$ with a singularity of type $A_{18}$.
  The polynomial is irreducible by Corollary~\ref{corollary:ReducibleDivisorsNotInteresting}.
\end{proof}

\bigskip

It remains to provide a lower bound for $N(3,4)$ and $N(3,6)$.
In these cases, it is not difficult to construct ``ingredients'' and apply our method.
However, we leave this as an exercise to the interested reader since a family of examples that we learned from Peter Feller gives a lower bound for $N(3,b)$ for \textit{all} even $b$.
It gives a specific polynomial of bidegree $(3,2n)$ with a singularity of type $A_{3n-1}$.
For $N(3,4)$ and $N(3,6)$ it gives the optimal bound as we will see. (For $b=8,10,12$ the bounds we have found are better.)

\begin{lemma}\label{lemma:LowerBoundForN(3,2b)}
  Let $b$ be any integer. Then $N(3,2b)\geq 3b-1$.
\end{lemma}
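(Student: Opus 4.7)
The plan is to exhibit an explicit polynomial $F_b(x,y)$ of bidegree $(3,2b)$ whose affine zero set has a singularity of type $A_{3b-1}$ at some chosen point $p_b$. Concretely, I would write $F_b$ in the form
\[
F_b(x,y)=A_0(y)+A_1(y)\,x+A_2(y)\,x^2+c\,x^3
\]
with coefficient polynomials $A_j\in\CCC[y]$ of degrees $\deg A_j\le \floor{(2b/3)(3-j)}$. By Lemma~\ref{lemma:TriangleToPolynomial} this is precisely the condition that the Newton polygon of $F_b$ lies in the triangle with vertices $(0,0)$, $(3,0)$, $(0,2b)$, so $F_b$ has bidegree $(3,2b)$ automatically.

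The next step is to pick the point $p_b=(x_0,y_0)$ (after a translation, we may assume $p_b=(0,0)$) and choose the coefficients so that $F_b(0,0)=\partial_xF_b(0,0)=\partial_yF_b(0,0)=0$, and moreover so that the degree-$2$ part of $F_b$ at $p_b$ is a non-zero perfect square, so that $F_b$ is reduced and has multiplicity $2$ at $p_b$. By Lemma~\ref{lemma:Multiplicity2ImpliesAk} this already forces $p_b$ to be a singularity of type $A_k$ for some $k\ge 1$, and reducedness is automatic as soon as $F_b$ is irreducible, which can be arranged generically.

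To pin down $k=3b-1$ one iterates Lemma~\ref{lemma:OneBlowUpOrDownOfAk}: one performs $n=\ceil{(3b-1)/2}$ successive blow-ups at the unique singular point of each strict transform and verifies, using the explicit form of $F_b$ in suitable local coordinates, that the multiplicity of the strict transform stays equal to $2$ at the infinitely near point at every stage, and that after the $n$-th blow-up one lands in configuration~\ref{item:OneBlowUpOrDownOfAk--I} of that lemma when $b$ is even (the singularity $A_{3b-1}$ is a node) and in configuration~\ref{item:OneBlowUpOrDownOfAk--II} when $b$ is odd (irreducible cusp). Then Corollary~\ref{corollary:SeveralBlowUpOfAk} identifies the singularity type as $A_{3b-1}$.

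The main obstacle is step~3: controlling $\ceil{(3b-1)/2}$ iterated blow-ups for all $b$ simultaneously. A cleaner route is to work with the Puiseux expansion(s) of $F_b=0$ at $p_b$ and check directly that the two branches of $F_b=0$ (for $b$ even) intersect with multiplicity $3b$, or that the single branch (for $b$ odd) has semigroup $\langle 2,3b\rangle$, which is equivalent by Lemma~\ref{lemma:OneBlowUpOrDownOfAk} to having an $A_{3b-1}$-singularity. Alternatively, one can realise $F_b$ as the image under a transversal chain of links, as in Section~\ref{section:ToBe--Ingredients}, of a tangent $3$-configuration on $\FFF_0$ or $\FFF_1$ whose type and intersection data have been set (via Lemma~\ref{lemma:aCollectionGivesSpecialCollection} and Lemma~\ref{lemma:InformationOfTangentAConfiguration}) to produce an $A_{2n+k}=A_{3b-1}$ singularity on $\FFF_{m'}$, with $m'$ and the residue $r\in\{0,1,2\}$ chosen according to $b\bmod 3$ so that Lemma~\ref{lemma:PolynomialVsDivisor} returns a polynomial of bidegree exactly $(3,2b)$.
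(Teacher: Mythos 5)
There is a genuine gap: your proposal never actually exhibits the polynomial $F_b$. Every step is conditional on choices that are left unmade --- ``choose the coefficients so that\dots'', ``verify, using the explicit form of $F_b$\dots'', ``which can be arranged generically'' --- but the existence of a bidegree-$(3,2b)$ polynomial with an $A_{3b-1}$-singularity is precisely the content of the lemma, and it does not follow from the setup. Imposing an $A_{3b-1}$-singularity costs on the order of $3b$ conditions, which essentially exhausts the roughly $3b$ available parameters in the space of bidegree-$(3,2b)$ polynomials, so one cannot simultaneously appeal to genericity for irreducibility and reducedness; a parameter count is at best suggestive and says nothing about whether the conditions can be met by a reduced curve. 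Your fallback routes have the same problem: the Puiseux-expansion check still needs an explicit equation to expand, and the chain-of-links route requires exhibiting a tangent $3$-configuration with the right numerical data $[C^2,S^2,k,I_p(S,C);m]$ --- exactly the ``ingredients'' that Section~\ref{section:ToBe--Ingredients} has to construct one at a time and for which no general family is produced there.

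The paper's proof closes this gap with a single explicit family (due to Feller): the curve $y^3-(x^b-y)^2=0$, which has bidegree $(3,2b)$ after exchanging the roles of $x$ and $y$. For this specific equation the iterated blow-up analysis you correctly identify as the main obstacle becomes trivial: after the substitution $y\mapsto y+x^b$ the equation is $(y+x^b)^3-y^2=0$, all $b$ blow-ups $(x,y)\mapsto(x,xy)$ can be carried out in one stroke to give $x^{2b}\bigl(x^b(y+1)^3-y^2\bigr)$, and the local analytic change $x\mapsto x(y+1)^{1/b}$ identifies the residual singularity as $x^b-y^2$, i.e.\ $A_{b-1}$; Lemma~\ref{lemma:OneBlowUpOrDownOfAk} then gives $A_{(b-1)+2b}=A_{3b-1}$ for the original curve. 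If you want to salvage your approach, you must either produce such an explicit equation or give a non-constructive existence argument; as written, the proposal is a verification scheme for an object that has not been shown to exist.
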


\begin{proof}
  By an example of Feller, the curve $y^a-(x^b-y)^2=0$ in $\AAA^2$ is of bidegree $(a,2b)$ with an $A_{ab-1}$-singularity.
  We check that $y^3-(x^b-y)^2=0$ has indeed an $A_{3b-1}$-singularity.
  The change of coordinates $y\mapsto y+x^b$ gives $(y+x^b)^3-y^2=0$. Locally, the blow-up at $(0,0)$ is given by $(x,y)\mapsto(x,xy)$, so after $b$ blow ups we get \[(x^by+x^b)^3-x^{2b}y^2=x^{2b}(x^b(y+1)^3-y^2). \]%
  With the analytic local coordinate change $x\mapsto x(y+1)^{\frac{1}{b}}$  we get $x^b-y^2$, which has an $A_{b-1}$-singularity.
  As we did $b$ blow-ups, the curve we started with has an $A_{(b-1)+2b}=A_{3b-1}$-singularity.
\end{proof}

\begin{corollary}\label{corollary:NFor3-4And3-6}
  $N(3,4)\geq 5$ and $N(3,6)\geq 8$.
\end{corollary}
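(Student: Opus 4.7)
The plan is straightforward: this is just the specialization of Lemma~\ref{lemma:LowerBoundForN(3,2b)} to the cases $b=2$ and $b=3$. I would simply invoke Lemma~\ref{lemma:LowerBoundForN(3,2b)}, which asserts $N(3,2b)\geq 3b-1$ for every integer $b$, and plug in the two relevant values.

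More concretely, for the first inequality I take $b=2$, so that $2b=4$ and $3b-1=5$, giving $N(3,4)\geq 5$. For the second, I take $b=3$, so that $2b=6$ and $3b-1=8$, giving $N(3,6)\geq 8$. No further computation is required, and there is no real obstacle — the content is entirely in the previous lemma, where Feller's family $y^3-(x^b-y)^2=0$ is shown to realize an $A_{3b-1}$-singularity on a polynomial of bidegree $(3,2b)$.

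If one wants to be slightly more explicit, one can point out the two concrete equations: for $(3,4)$ the polynomial $y^3-(x^2-y)^2$ has an $A_5$-singularity at the origin, and for $(3,6)$ the polynomial $y^3-(x^3-y)^2$ has an $A_8$-singularity at the origin. Both verifications are exactly the chain of blow-ups and analytic coordinate change carried out in the proof of Lemma~\ref{lemma:LowerBoundForN(3,2b)} with $a=3$ and $b\in\{2,3\}$. Hence the corollary follows.
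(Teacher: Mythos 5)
Your proof is correct and matches the paper's own argument exactly: the corollary is stated in the paper as following directly from Lemma~\ref{lemma:LowerBoundForN(3,2b)} with the two specializations you give. The explicit polynomials $y^3-(x^2-y)^2$ and $y^3-(x^3-y)^2$ are a nice optional addition but not needed.
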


\begin{proof}
  Follows directly from Lemma~\ref{lemma:LowerBoundForN(3,2b)}.
\end{proof}

\begin{remark}\label{remark:LowerBound}
  We have therefore the following lower bounds (LB) for $N(3,b)$:
  \begin{center}
    \begin{tabular}{ >{$}r<{$} || >{$}c<{$} | >{$}c<{$} | >{$}c<{$}| >{$}c<{$}| >{$}c<{$}| >{$}c<{$}| >{$}c<{$} | >{$}c<{$} | >{$}c<{$} | >{$}c<{$} | >{$}c<{$} }
      b &  3 & 4 & 5 & 6 & 7 & 8 & 9 & 10 & 11 & 12\\ \hline
      \text{LB for }N(3,b) & 3 & 5 & 7 & 8 & 10 & 12 & 13 & 15 & 17 & 18. \\
    \end{tabular}
  \end{center}
\end{remark}

\section{... Or Not To Be}\label{section:OrNotToBe}

In this chapter we find an upper bound for $N(3,b)$ for small $b$.
We use the ``recipe'' of Section~\ref{section:ToBe--Recipe} but in converse direction. First, we verify in Section~\ref{subsection:OrNotToBe--TheRecipe} that we are allowed to go backwards, and then determine in Section~\ref{subsection:OrNotToBe--NonExistence} that the configurations we get do not occur.

\subsection{The recipe}\label{subsection:OrNotToBe--TheRecipe}

We start with a polynomial $F$ of bidegree $(3,3m)$ with a large $A_k$-singularity.
By Lemma~\ref{lemma:ReducibleDivisorsNotInteresting}, its $(3,3m)$-divisor $C$ is irreducible and hence does not contain any fiber.
Having $C\sim 3S_+$, it follows that $C$ is a $3$-section, and $C\cap S_-=\emptyset$. Thus, $(C,S_-,\bullet,\bullet)_m$ is a disjoint $3$-configuration.

Recall that in Lemma \ref{lemma:3-SectionSituation} we have observed that as soon as $k\geq 3$, the $A_k$-singularity at $s$ has a cofibered point and so $(C,\bullet,s,\bullet)_m$ is a $3$-configuration.

We will use the recipe of Section~\ref{section:ToBe--Recipe} in a converse direction. To say it metaphorically: Instead of cooking ``easy'' ingredients into very singular curves, we ``de-cook'' singular curves into ``easy'' configurations (and later show that these configurations do not exist).

\begin{lemma}\label{lemma:SingularPointOfA3SectionHasATransversalPoint}
 Let $m\geq0$ and $k\geq 1$ be two integers and let $\mathcal{C}=(C,\bullet,s,\bullet)_m$ be a $3$-configuration of type $k\geq1$.
 Let $\pi:\FFF_{m}\dashrightarrow\FFF_{m'}$ be an $s$-link with inverse point $p'$.
 Then, $(\pi_*(C),\bullet,\bullet,p')$ is a $3$-configuration, which we will call the \textit{direct image of $\mathcal{C}$} and denote it by $\pi_*(\mathcal{C})$.
 Moreover, $\pi_*(\mathcal{C})$ is of type $k-2\geq-1$.
\end{lemma}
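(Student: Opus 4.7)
The plan is to invert Lemma~\ref{lemma:3SectionAfterChainOfnLinks}: the $s$-link $\pi$ reverses a transversal chain of one link centered at $p'$, so once $\pi_*(\mathcal C)$ is identified as a genuine $3$-configuration, the type formula follows by functoriality.

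First I would verify that $(\pi_*(C),\bullet,\bullet,p')_{m'}$ is a $3$-configuration. By Lemma~\ref{lemma:aSectionIsaSection}, $C':=\pi_*(C)$ is again a $3$-section, so only the transversality of $p'$ at $C'$ has to be checked. Factor $\pi=\sigma\circ\rho^{-1}$, where $\rho\colon X\to\FFF_m$ is the blow-up at $s$ with exceptional divisor $E$, and $\sigma\colon X\to\FFF_{m'}$ contracts the strict transform $\tilde f$ of the fiber $f$ through $s$ to $p'$. Since $s$ is an $A_k$-singularity with $k\geq 1$, Lemma~\ref{lemma:OneBlowUpOrDownOfAk} gives $m_s(C)=2$ and $\tilde C\cdot E=2$. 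Because $\mathcal C$ is of type $k\geq 1$, the cofibered transversal point $p\in C$ of $s$ satisfies $C\cap f=\{p,s\}$ with $I_p(C,f)=1$, so $I_s(C,f)=C\cdot f-1=2=m_s(C)\,m_s(f)$; this equality forces $C$ and $f$ to share no tangent direction at $s$, so $\tilde C$ meets $\tilde f$ only at the unique lift $\hat p$ of $p$, with $\hat p\neq E\cap\tilde f$. Since $\sigma$ contracts $\tilde f$ to $p'$, distinct points of $\tilde f$ correspond to distinct tangent directions of $\FFF_{m'}$ at $p'$: $\hat p$ determines the tangent of $C'$ at $p'$ (as $\tilde C$ crosses $\tilde f$ transversally there), while $E\cap\tilde f$ determines the tangent of $f'=\sigma(E)$ at $p'$. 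These being distinct yields $I_{p'}(C',f')=1$, so $p'$ is transversal.

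For the type, I would observe that $\pi^{-1}\colon\FFF_{m'}\dashrightarrow\FFF_m$ is a $p'$-link with inverse point $s$, hence a transversal $\pi_*(\mathcal C)$-chain of one link. Letting $k'\geq -1$ denote the type of $\pi_*(\mathcal C)$ (guaranteed by Lemma~\ref{lemma:SituationOfA3SectionOnTheFiber}), Lemma~\ref{lemma:3SectionAfterChainOfnLinks} applied to $\pi^{-1}$ then says that $(\pi^{-1})_*(\pi_*(\mathcal C))$ is of type $2\cdot 1+k'=k'+2$. But this direct image is $\mathcal C$ itself, of type $k$, so $k'=k-2\geq -1$, as claimed. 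The main obstacle is the transversality check, i.e.\ translating the algebraic identity $I_s(C,f)=m_s(C)\,m_s(f)$ into the geometric statement that $C'$ and $f'$ have distinct tangent directions at $p'$; once this is in hand, everything else reduces to the machinery from Section~\ref{section:ToBe--Recipe}.
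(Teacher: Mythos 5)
Your proposal is correct and follows essentially the same route as the paper: factor the $s$-link through the blow-up $\rho$ at $s$ and the contraction $\sigma$ of $\tilde f$, use $I_s(C,f)=2=m_s(C)$ to see that $\tilde C$ meets $\tilde f$ only at the lift of $p$ and away from $E\cap\tilde f$, conclude that $p'$ is a transversal point of $\pi_*(C)$, and then obtain the type $k-2$ by viewing $\pi^{-1}$ as a transversal $\pi_*(\mathcal{C})$-chain of one link and invoking Lemma~\ref{lemma:3SectionAfterChainOfnLinks}. The only difference is that you spell out the tangent-direction bookkeeping at $p'$ in more detail than the paper does.
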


\begin{proof}
 Let $\rho:X\to\FFF_m$ be the blow-up centered at $s$, so we can assume that $\sigma=\pi\circ\rho:X\to\FFF_{m'}$ is the contraction of the strict transform $\tilde f$.
 Let $E$ denote the exceptional divisor of $\rho$. Since $C\cap f=\{p,s\}$ we have that $\tilde f\cap \tilde C=\{\hat p\}$, where $\hat p=\rho(p)$ is a transversal point of $\tilde C$.
 Moreover, the intersection of $E$ with $\tilde f$ is transversal, since $I_s(C,f)=2=m_s(C)$.
 Thus, contracting $\tilde f$ onto $p'$ makes $p'$ a transversal point of $C'=\pi_*(C)$.
 So, $\pi_*(\mathcal{C})$ is indeed a $3$-configuration.

 The fact that $\pi_*(\mathcal{C})$ is of type $k-2$ follows from Lemma~\ref{lemma:3SectionAfterChainOfnLinks} because $\pi$ is the inverse of a $p'$-link with inverse point $s$ (which is a transversal $\pi_*(\mathcal{C})$-chain of $1$ link).
\end{proof}

This leads us to the inverse of a transversal $\mathcal{C}$-chain of links, which we will call a \textit{singular} $\mathcal{C}$-chain of links (because it is obtained by blowing up a singular point), described in the following definition. However, after an $s$-link the type drops from $k$ to $k-2$. Writing $k=2n$ respectively $k=2n-1$ depending on $k$ even or odd, we can apply Lemma~\ref{lemma:SingularPointOfA3SectionHasATransversalPoint} at most $n=\ceil{\frac{k}{2}}$ times.
Then, the integer $k$ is $\leq 0$ and we cannot continue the process.
Remark that $n\leq\ceil{\frac{k}{2}}$ is equivalent to $k-2n\geq-1$.

\begin{definition}\label{definition:SingularChainOfNLinks}
  Let $k=k_0\geq 1$, $m=m_0\geq 0$ and $1\leq n\leq \ceil{\frac{k}{2}}$ be three integers.
  Let $\mathcal{C}=\mathcal{C}_0=(C=C_0,\bullet,s=s_0,p=p_0)_{m}$ be a $3$-configuration of type $k\geq1$.
  For $i=1,\ldots,n$ let $\pi_i:\FFF_{m_{i-1}}\dashrightarrow\FFF_{m_i}$ be a $s_{i-1}$-link with inverse point $p_i$ and let $\mathcal{C}_i=\pi_*(\mathcal{C}_{i-1})$ be as in Lemma~\ref{lemma:SingularPointOfA3SectionHasATransversalPoint}.
  We say that the composition $\pi=\pi_n\circ\cdots\circ\pi_1:\FFF_{m_0}\dashrightarrow\FFF_{m_n}$ is a \textit{singular $\mathcal{C}$-chain of $n$ links} and we say that $\mathcal{C}_n$ is the \textit{direct product $\pi_*(\mathcal{C})$ of $\mathcal{C}$}.
  The singular $\mathcal{C}$-chain of $n$ links is the inverse of a transversal $\pi_*(\mathcal{C})$-chain of $n$ links.
\end{definition}

\begin{lemma}\label{lemma:InformationOf3ConfigurationAfterSingularChain}
  Let $m\geq 0$, $k\geq 1$ and $1\leq n\leq \ceil{\frac{k}{2}}$ be three integers.
  Let $\mathcal{C}=(C,S,\bullet,p)_m$ be a disjoint or tangent $3$-configuration of type $k\geq1$ such that $I_p(S,C)\geq n$.
  Let $\pi:\FFF_m\dashrightarrow\FFF_{m'}$ be a singular $\mathcal{C}$-chain of $n$ links.
  Then, the $3$-configuration $\pi_*(\mathcal{C})$ is tangent and is equipped with the information \[[C^2-3n, S^2+n,k-2n, I_p(S,C)+n;\bullet].\]
\end{lemma}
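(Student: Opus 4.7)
The plan is to exploit Definition~\ref{definition:SingularChainOfNLinks}, which says $\pi$ is precisely the inverse of a transversal $\mathcal{C}'$-chain of $n$ links with $\mathcal{C}' := \pi_*(\mathcal{C})$. Provided $\mathcal{C}'$ is a tangent $3$-configuration, Lemma~\ref{lemma:InformationOfTangentAConfiguration} applied to $\pi^{-1}:\FFF_{m'}\dashrightarrow \FFF_m$ with $a=3$ gives the invariants of $\mathcal{C}$ in terms of those of $\mathcal{C}'$; solving for the primed quantities recovers exactly the stated $[C^2-3n,\, S^2+n,\, k-2n,\, I_p(S,C)+n;\bullet]$ (the type-shift formula in that lemma gives type $k-2n$ for $\mathcal{C}'$). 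So the substance of the proof is to verify that $\mathcal{C}'$ is indeed tangent: a priori, Lemma~\ref{lemma:SingularPointOfA3SectionHasATransversalPoint} only guarantees it is a $3$-configuration of type $k-2$.

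First, I would observe that $I_p(S,C)\geq n\geq 1$ forces $p\in S\cap C$, ruling out the disjoint case and giving $C\cap S=\{p\}$. The inequality $n\leq \ceil{\frac{k}{2}}$ ensures that each intermediate type $k-2i$ (for $0\leq i<n$) is at least $1$, so Lemma~\ref{lemma:SingularPointOfA3SectionHasATransversalPoint} legitimately applies at each stage and produces a sequence of $3$-configurations $\mathcal{C}_i=(C_i,S_i,s_i,p_i)_{m_i}$. Proceeding by induction on $n$, it therefore suffices to handle the case $n=1$, namely to show that one $s$-link carries a tangent $3$-configuration to a tangent one.

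For $n=1$, write the link as $\sigma\circ\rho^{-1}$, where $\rho:X\to\FFF_m$ is the blow-up at $s$ with exceptional divisor $E$ and $\sigma:X\to\FFF_{m'}$ contracts $\tilde f$ to $p_1$. The critical geometric input, and the main obstacle, is that $s\notin S$ (since $C\cap S=\{p\}$ and $s\neq p$): hence $\tilde S$ avoids $E$ entirely and meets $\tilde f$ transversally at the unique point $\hat p=\rho^{-1}(p)$, and at $\hat p$ the blow-up is a local isomorphism so that $\tilde S\cap \tilde C=\{\hat p\}$ with $I_{\hat p}(\tilde S,\tilde C)=I_p(S,C)$. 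Contracting $\tilde f$ identifies the three strict transforms at $p_1$, giving $C_1\cap S_1=\{p_1\}$ (so $\mathcal{C}_1$ is tangent), while the standard $(-1)$-curve contraction formulas yield $C_1^2=C^2-3$, $S_1^2=S^2+1$, and $I_{p_1}(S_1,C_1)=I_p(S,C)+1$. Once tangency at each step is established, iterating the base case — or equivalently invoking Lemma~\ref{lemma:InformationOfTangentAConfiguration} applied to $\pi^{-1}$ as in the first paragraph — finishes the proof.
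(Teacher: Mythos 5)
Your overall strategy --- reduce everything to showing that $\pi_*(\mathcal{C})$ is tangent, then read off the numerical data by applying Lemma~\ref{lemma:InformationOfTangentAConfiguration} to the inverse transversal chain --- is exactly the paper's, and your treatment of the $n=1$ tangent case (blow-up at $s$, the key observation $s\notin S$, and the contraction formulas for $C_1^2$, $S_1^2$ and $I_{p_1}$) is correct.

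The gap is your opening move: you use the hypothesis $I_p(S,C)\geq n\geq 1$ to ``rule out the disjoint case.'' That hypothesis is superfluous (the paper's proof never invokes it, and along a singular chain the intersection number only grows), and reading it literally makes the lemma vacuous for disjoint configurations --- which is precisely the case the paper needs. In Lemma~\ref{lemma:SpecialInformationAfterSingularChain}, and hence in Lemmas~\ref{lemma:NonExistenceOfPolynomialsWhereMIs3} and~\ref{lemma:NonExistenceWhereMIs4}, the present lemma is applied to $(C,S_-,\bullet,p)_m$ with $C\sim 3S_+$ irreducible, so $C\cap S_-=\emptyset$ and $I_p(S_-,C)=0$; the conclusion actually used there is $I_{p'}(S',C')=I_p(S,C)+n=n$. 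So your proof, as written, does not establish the statement in the form it is needed downstream. What is missing is only the first step of the chain when $\mathcal{C}$ is disjoint: the section $S$ meets the fiber $f$ through $p$ and $s$ in a single point $r\notin\{p,s\}$ (note $s\notin S$ because $C\cap S=\emptyset$); blowing up $s$ and contracting $\tilde f$ sends both the preimage of $r$ on $\tilde S$ and the preimage of $p$ on $\tilde C$ to $p_1$, so $C_1$ and $S_1$ meet exactly at $p_1$ with $I_{p_1}(S_1,C_1)=C_1\cdot S_1=C\cdot S+(\tilde C\cdot\tilde f)(\tilde S\cdot\tilde f)=0+1=1$. Thus $\mathcal{C}_1$ is tangent with $I_{p_1}=1$, and from there your tangent-case induction takes over and yields the stated value $I_p(S,C)+n$.
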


\begin{proof}
  Note that it is enough to prove that $\pi_*(\mathcal{C})=(C',S',\bullet,p')_{m'}$ is tangent.
  Knowing this, we can apply Lemma~\ref{lemma:InformationOfTangentAConfiguration} onto the inverse of the singular $\mathcal{C}$-chain, which is a transversal $\mathcal{C}$-chain and find that $\mathcal{C'}$ is of type $k-2n\geq -1$.
  The other statements follow analogously.

  We now show that $\pi_*(\mathcal{C})$ is tangent.
  Since $\mathcal{C}$ is tangent or disjoint, $S$ and $C$ intersect at most in $p$. In any case, the section $S$ intersects the fiber $f$ meeting $p$ either in $p$, or in another point, say $r$.
  By blowing up $p$ and the contracting the strict transform $\tilde f$ onto $p'$, $C'$ and $S'$ intersect in $p'$ (and only in $p'$).
  Hence, $\mathcal{C}'$ is tangent.
\end{proof}

\begin{remark}\label{remark:3SectionGetsEventuallySmooth}
  Let $C\subset\FFF_m$ be a $3$-section with an $A_k$-singularity at $s$ that is smooth elsewhere.
  Assume that $s$ has a $C$-cofibered point.
  Then, Lemma~\ref{lemma:InformationOf3ConfigurationAfterSingularChain} implies the existence of a birational map $\pi:\FFF_m\dashrightarrow\FFF_{m'}$, composed by $n=\ceil{\frac{k}{2}}$ links, such that $\pi_*(C)$ is smooth.
\end{remark}

To show non-existence of a polynomial of bidegree $(3,3m-r)$ we will assume it exists and find its $(3,3m)$-divisor $C_0$ with Lemma~\ref{lemma:PolynomialVsDivisor},\ref{item:PolynomialVsDivisor--PD}.
We take $S_0$ to be the $(-m)$-curve and the following Lemma~\ref{lemma:SpecialInformationAfterSingularChain} will assert that we arrive in $\FFF_1$ and the curve we get is (almost) smooth.
Then, we will contract the $(-1)$-curve and get a situation in the projective plane $\PPP^2$.
Finally, we will show that this situation does not exist in certain cases.

The following lemma has some assumptions that are specific to the cases that we want to study.
To arrive in some $\FFF_{l}$, where $l$ is odd, we need that the parity of $m$ and the number of blow-ups, $n$, differ.
To make sure we arrive in $\FFF_1$, we have to assume that ``$C^2-3n\leq 17$'' holds.
However, we cannot have always that after $n$ links, we arrive in a smooth situation: Sometimes, we achieve only an $A_1$- or $A_2$-singularity.

\begin{lemma}\label{lemma:SpecialInformationAfterSingularChain}
  Let $m\geq0$, $k\geq 1$ and $1\leq n\leq \ceil{\frac{k}{2}}$ be integers such that $n-m$ is odd.
  Let $\mathcal{C}=(C,S_-,\bullet,p)_m$ be a $3$-configuration of type $k\geq1$ such that $C\sim3S_+$ is an irreducible $3$-section.
  Assume that $C^2-3n\leq 17$.
  Let $\varphi:\FFF_{m}\dashrightarrow\FFF_{m'}$ be a singular $\mathcal{C}$-chain of $n$ links.
  Then, $\pi_*(\mathcal{C})$ is equipped with \[[9m-3n,-m+n,k-2n,n;1].\]
\end{lemma}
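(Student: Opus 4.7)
The starting configuration $\mathcal{C}=(C, S_-, \bullet, p)_m$ is disjoint: since $C \sim 3S_+$ is irreducible and distinct from $S_-$, we have $C \cdot S_- = 3S_+ \cdot S_- = 0$, so $C \cap S_- = \emptyset$. Thus the initial data is $[C^2, S_-^2, k, I_p(S_-, C); m] = [9m, -m, k, 0; m]$. The plan is to track the four invariants through the $n$ singular links and then determine $m'$ separately.

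By Lemma~\ref{lemma:SingularPointOfA3SectionHasATransversalPoint}, the type decreases by $2$ at each step, ending at $k - 2n$. Although Lemma~\ref{lemma:InformationOf3ConfigurationAfterSingularChain} requires $I_p(S,C) \geq n$ and so does not apply literally to the disjoint starting configuration, the same formulas for $C^2$, $S^2$, and $I_p(S,C)$ remain valid. Indeed, at each singular $s_i$-link one has $s_i \notin S_i$ (because $S_i \cap C_i \subseteq \{p_i\}$ and $s_i \neq p_i$), and a direct intersection-theoretic computation with the blow-up of $s_i$ and contraction of $\tilde f_i$ (using $m_{s_i}(S_i) = 0$, $m_{s_i}(C_i) = 2$, $\tilde S_i \cdot \tilde f_i = 1$, $\tilde C_i \cdot \tilde f_i = 1$, and $\tilde f_i^2 = -1$) yields
\[C_i^2 = C_{i-1}^2 - 3, \quad S_i^2 = S_{i-1}^2 + 1, \quad I_{p_i}(S_i, C_i) = I_{p_{i-1}}(S_{i-1}, C_{i-1}) + 1.\]
Iterating these $n$ times, $\varphi_*(\mathcal{C})$ carries information $[9m - 3n,\, -m + n,\, k - 2n,\, n;\, m']$.

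It remains to show $m' = 1$. Since each singular link is an $s_i$-link, the remark following Definition~\ref{definition:pLink} gives $m_i - m_{i-1} \in \{\pm 1\}$, hence $m' - m \equiv n \pmod 2$. The hypothesis that $n - m$ (and therefore $n + m$) is odd forces $m'$ to be odd, so $m' \geq 1$. For the upper bound, Lemma~\ref{lemma:aSectionIsaSection} applied iteratively shows $C' = \varphi_*(C)$ is an irreducible $3$-section; writing $C' \sim 3S_- + bf$ with $b \geq 3m'$ (from $C' \cdot S_- \geq 0$, as $C' \neq S_-$ is irreducible), we get
\[C'^2 = -9m' + 6b \geq 9m'.\]
The hypothesis $C^2 - 3n \leq 17$ is precisely $C'^2 \leq 17$, so $9m' \leq 17$, giving $m' \leq 1$. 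Combined with the parity bound, $m' = 1$.

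The main obstacle is the transition from the disjoint to the tangent regime at the first link, which Lemma~\ref{lemma:InformationOf3ConfigurationAfterSingularChain} does not cover, so one must directly verify that after one singular link $S_1$ and $C_1$ become tangent at the new inverse point $p_1$ with intersection $1$, and that the self-intersection formulas still hold there. Once this is in hand, the remaining $n-1$ links land inside the tangent case and pinning down $m' = 1$ is an elementary combination of the parity constraint with the ceiling imposed by $C'^2 \leq 17$ on an irreducible $3$-section.
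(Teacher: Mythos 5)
Your proof is correct and follows essentially the same route as the paper: track the data $[C^2,S^2,k,I_p(S,C)]$ through the $n$ singular links, deduce that $m'$ is odd from parity, and pin down $m'\leq 1$ by combining $C'^2=C^2-3n\leq 17$ with $C'\sim 3S_-+bf$, $b\geq 3m'$, for the irreducible $3$-section $C'$. The only divergence is that you rederive the intersection formulas by hand instead of citing Lemma~\ref{lemma:InformationOf3ConfigurationAfterSingularChain}, because its stated hypothesis $I_p(S,C)\geq n$ fails for a disjoint configuration; the paper invokes that lemma anyway (the hypothesis is evidently a misstatement, as it is never used in that lemma's proof, which explicitly allows the disjoint case), so your direct verification patches a small wrinkle in the cited statement rather than changing the argument.
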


\begin{proof}
  Write $\pi_*(\mathcal{C})=(C',S',\bullet,p')_{m'}$.
  Since $C\sim3S_+$ is irreducible, the $3$-configuration $\mathcal{C}$ is disjoint and hence with Lemma~\ref{lemma:InformationOf3ConfigurationAfterSingularChain} we find that $I_{p'}(S',C')=n$.
  Having $S_-^2=-m$, we find (again with Lemma~\ref{lemma:InformationOf3ConfigurationAfterSingularChain}) that $S'^2=-m+n$, which is odd by assumption. Hence, $m'$ is odd because $S'$ is a section.
  Using the same lemma, we get $C'^2=C^2-3n=9m-3n$ and by assumption $C'^2\leq 17$. Since we assumed $C$ to be irreducible, the $3$-section $C'$ is irreducible, too, and we can write $C'\sim 3S_-+bf\subset\FFF_{m'}$ for some integer $b\geq 3m'$.
  This gives $17\geq C'^2=-9m'+6b\geq9m'$ and so $m'\leq 1$. Since we already know that $m'$ is odd, $m'=1$ follows.
\end{proof}

\subsection{The non-ingredients}\label{subsection:OrNotToBe--NonExistence}

In this section, we give an upper bound for $N(3,9)$ and $N(3,12)$.
We assume that there is a curve with a larger $A_k$-singularity and find a contradiction to the configuration in $\PPP^2$ that we obtain with the ``recipe'' from Section~\ref{subsection:OrNotToBe--TheRecipe}.

\begin{lemma}\label{lemma:NonExistenceOfPolynomialsWhereMIs3}
  There is an upper bound $N(3,9)\leq 13$. In particular, there is no polynomial of bidegree $(3,9)$ with a singularity of type $A_{14}$.
\end{lemma}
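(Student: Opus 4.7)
The plan is to assume for contradiction that a polynomial $F$ of bidegree $(3,9)$ with an $A_{14}$-singularity exists, run the de-cooking recipe to land on $\FFF_1$, contract the $(-1)$-curve to reach $\PPP^2$, and then rule out the resulting configuration using the parametrization of the cuspidal cubic. First, by Corollary~\ref{corollary:ReducibleDivisorsNotInteresting} applied with $m=3$, $r=0$, any reducible example caps at $A_{11}$, so $F$ must be irreducible, and so is its $(3,9)$-divisor $C_0 \subset \FFF_3$ via Lemma~\ref{lemma:PolynomialVsDivisor}. Since $C_0 \sim 3S_+$, we have $C_0\cdot S_-=0$, i.e.\ $C_0$ avoids $S_-$. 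Because $k=14\geq 3$, Lemma~\ref{lemma:3-SectionSituation} supplies a $C_0$-cofibered point $p$ for the singular point $s$, so $\mathcal{C} := (C_0, S_-, s, p)_3$ is a disjoint $3$-configuration of type $14$.

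Next I apply Lemma~\ref{lemma:SpecialInformationAfterSingularChain} with $n=6$: the parity $n-m=3$ is odd, $n\leq\lceil 14/2\rceil=7$, and $C_0^2 - 3n = 27-18 = 9 \leq 17$. I obtain a singular $\mathcal{C}$-chain $\pi:\FFF_3\dashrightarrow\FFF_1$ whose image $\pi_*(\mathcal{C}) = (C',S',s',p')_1$ carries the data $[9,3,2,6;1]$. Writing $C'\sim 3S_-+bf$ with $b\geq 3$ and matching self-intersections gives $b=3$, so $C'\sim 3S_+$; it stays irreducible by Lemma~\ref{lemma:aSectionIsaSection}. Similarly $S'\sim S_-+2f$ is a section with $S'\cdot S_-=1$. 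The type being $2$ means $C'$ has an $A_2$-singularity at $s'$, while $I_{p'}(S',C')=6$.

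Then I contract the $(-1)$-curve $S_-$ via $\tau:\FFF_1\to\PPP^2$, sending $S_-$ to a point $q\in\PPP^2$. Since $C'\cap S_- = \emptyset$ and $\tau|_{C'}$ is an isomorphism, $\tilde C := \tau(C')$ is an irreducible \emph{cuspidal cubic} with cusp at $\tilde s := \tau(s')$ and $q\notin\tilde C$. Because $S'\cong\PPP^1$ meets $S_-$ once, $\tilde S := \tau(S')$ is a degree-two irreducible rational curve through $q$, necessarily a \emph{smooth} conic. The local intersection is preserved at $\tilde p := \tau(p')$, a smooth point of $\tilde C$, giving $I_{\tilde p}(\tilde C,\tilde S)=6$.

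The main obstacle, and only nontrivial step, is ruling out such a sextactic configuration. After a projective change of coordinates I take $\tilde C:y^2z=x^3$ with normalization $\phi(t)=[t^2:t^3:1]$ (cusp at $\phi(0)$, flex at $\phi(\infty)$). Pulling back a general conic $S(x,y,z)=Ax^2+Bxy+Cy^2+Dxz+Eyz+Fz^2$ gives
\[
S(\phi(t)) = F + D\,t^2 + E\,t^3 + A\,t^4 + B\,t^5 + C\,t^6,
\]
which conspicuously has \emph{no} linear term. If $I_{\phi(t_0)}(\tilde S,\tilde C)\geq 6$ for some finite $t_0$, then $S(\phi(t))=c(t-t_0)^6$; matching the coefficient of $t$ forces $-6\,c\,t_0^5=0$. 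The option $c=0$ would mean $\tilde C\subset\tilde S$, impossible on degree grounds, so $t_0=0$, i.e.\ $\tilde p$ is the cusp, contradicting the smoothness of $\tilde p\in\tilde C$. The remaining case $\tilde p=\phi(\infty)$ is handled in the chart $y=1$: the six vanishing conditions force $A=B=C=D=E=0$, leaving $\tilde S=\{Fz^2=0\}$, a double line, contradicting the irreducibility of $\tilde S$. This contradiction establishes $N(3,9)\leq 13$.
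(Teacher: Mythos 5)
Your reduction to a planar configuration is exactly the paper's: rule out reducible divisors, note $C_0$ irreducible, form the disjoint $3$-configuration $(C_0,S_-,s_0,p_0)_3$, run a singular chain of $n=6$ links (checking $n-m$ odd and $C_0^2-3n=9\leq 17$) to land on $\FFF_1$ with data $[9,3,2,6;1]$, and contract $S_-$ to get a cuspidal cubic and a conic through $q$ meeting only at a smooth point of the cubic with multiplicity $6$. Where you genuinely diverge is the endgame. The paper proves Lemma~\ref{lemma:NonExistenceCubicConicConfiguration} by normalizing the \emph{conic} to $xz-y^2$, computing $\psi^{-1}(u^6)=\{x^3+(xz-y^2)(ax+by+cz)\}$, pinning down $a,b,c$ from the singularity of the cubic, and exhibiting two distinct tangent directions at the singular point (hence a node, not a cusp). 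You instead normalize the \emph{cuspidal cubic} to $y^2z=x^3$ and pull the conic back along $\phi(t)=[t^2:t^3:1]$; the absence of a linear term in $F+Dt^2+Et^3+At^4+Bt^5+Ct^6$ kills any $c(t-t_0)^6$ with $t_0\neq 0$, and the $t_0=\infty$ case degenerates the conic to $z^2=0$. This is the classical ``a cuspidal cubic has no sextactic points'' argument; it is shorter and needs no case analysis on the coefficients of the cubic, at the price of proving only the cuspidal case rather than the paper's slightly more general statement (which also feeds into Lemma~\ref{lemma:NonExistenceWhereMIs4}). Both are correct.

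Two small loose ends. First, you deduce ``$C_0$ irreducible'' from ``$F$ irreducible''; that implication is not what Lemma~\ref{lemma:PolynomialVsDivisor}\ref{item:PolynomialVsDivisor--DP} gives (it gives the converse), and a priori the $(3,9)$-divisor of an irreducible $F$ could pick up boundary components. The clean route is the paper's: apply Lemma~\ref{lemma:ReducibleDivisorsNotInteresting} directly to the divisor $C_0$, which carries the $A_{14}$-singularity, to conclude $C_0$ is irreducible. Second, you only refute $k=14$, but $N(3,9)\leq 13$ requires excluding all $k\geq 14$; an $A_k$ with $k\geq 15$ does not formally contain an $A_{14}$. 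The paper closes this by first invoking the genus bound (Lemma~\ref{lemma:GenusUpperBoundForIrreducibleDivisors}, giving $k\leq 2g(C_0)=14$ for irreducible $C_0$), so that $k=14$ is the only case left. Add that sentence and your argument is complete.
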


\begin{proof}
  Let us assume that a polynomial of bidegree $(3,9)$ with an $A_k$-singularity and $k\geq 14$ exists.
  If the $(3,9)$-divisor $C_0$ of the polynomial is reducible, we already know by Lemma~\ref{lemma:ReducibleDivisorsNotInteresting} that it cannot have such a singularity.
  So $C_0$ is irreducible and by Lemma~\ref{lemma:GenusUpperBoundForIrreducibleDivisors} we know that $k\leq 14$. So assume that $C_0$ has an $A_k$-singularity at a point $s_0$ with $k=14$.
  We consider the $3$-configuration $\mathcal{C}_0=(C_0,S_-,s_0,p_0)_3$, which is disjoint because $C_0\sim 3S_+$ is irreducible.
  Let $\pi:\FFF_m\dashrightarrow\FFF_{m'}$ be a singular $\mathcal{C}_0$ chain of $n=6$ links. Note that $n-m=3$ is odd, that $6<7=\frac{14}{2}$ and that $C_0^2-3n=27-18=9<17$.
  Together with Lemma~\ref{lemma:InformationOf3ConfigurationAfterSingularChain}, we find that $\mathcal{C}_n=\pi_*(\mathcal{C}_0)$ is equipped with \[[9,3,2,6;1],\] where we found $A_2$ because $14-2n=2$.
  Knowing the self-intersection of the $3$-section $C_n$ and the section $S_n$ on $\FFF_1$, we find $C_n\sim 3S_-+3f$ and $S_n\sim S_-+2f$.
  As $C_n$ is irreducible, $C_n$ and $S_-$ do not intersect. In particular, $p_n$ is not contained in $S_-$. Let $\rho:\FFF_1\to\PPP^2$ be the contraction of the $(-1)$-curve $S_-$ onto a point $q$ in $\PPP^2$.
  Let $C=\rho_*(C)$, $S=\rho_*(S)$, $p=\rho(p_n)\neq q$ and $s=\rho(s_n)\neq q$.
  We have $C^2=C_n^2=9$ (so $C$ is a cubic with $m_q(C)=0$) and $S^2=S_n^2+1$ because $S_n\cdot S_-=1$ and thus $m_q(S)=1$ (so $S$ is a conic going through $q$).
  Moreover, $C$ has a singularity of type $A_2$ at $s$, and since $s_n$ and $p_n$ are cofibered, the three distinct points $s$, $p$ and $q$ are collinear.
  To recapitulate, we have a cubic $C$ with a cusp at $s\in\PPP^2$ that intersects a conic $S$ at $p$ with $I_p(S,C)=6$, so they intersect only at one point $p\neq s$.
  This is not possible by Lemma~\ref{lemma:NonExistenceCubicConicConfiguration}.
\end{proof}

\begin{lemma}\label{lemma:NonExistenceCubicConicConfiguration}
  Let $C\subset\PPP^2$ be an irreducible cubic curve that has a singularity at a point $s$, and let $S$ be an irreducible conic that intersects $C$ at exactly one point $p$ at which $C$ is smooth.
  Then, the singularity of $C$ is a node.
\end{lemma}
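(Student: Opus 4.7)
The plan is to rule out a cuspidal singularity. By Lemma~\ref{lemma:GenusUpperBoundForIrreducibleDivisors}, an irreducible cubic (of arithmetic genus $1$) can only admit $A_k$-singularities with $k\leq 2$, so the sole alternative to a node is a cusp ($A_2$). By B\'ezout, $S\cdot C=6$, and since $S\cap C=\{p\}$ we conclude $I_p(S,C)=6$. I assume for contradiction that $C$ has a cusp at $s$.

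After a projectivity I may assume $C$ is the standard cuspidal cubic $y^2z-x^3=0$ with cusp $s=[0:0:1]$, and use the parametrization
\[
\varphi\colon\PPP^1\to C,\qquad [u:v]\mapsto[u^2v:u^3:v^3],
\]
which is an isomorphism outside the cusp (with $\varphi([0:1])=s$). Since $C$ is smooth at $p$, there is a unique $q=[u_0:v_0]\in\PPP^1$ with $\varphi(q)=p$, and necessarily $u_0\neq 0$.

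The key step is to analyze the pullback of conics. The map $\Phi\colon H^0(\PPP^2,\mathcal{O}(2))\to H^0(\PPP^1,\mathcal{O}(6))$ sends the basis $\{y^2,xy,x^2,yz,xz,z^2\}$ to $\{u^6,u^5v,u^4v^2,u^3v^3,u^2v^4,v^6\}$, so $\Phi$ is injective (no conic contains $C$) and its six-dimensional image consists precisely of the binary sextics in which the coefficient of $uv^5$ vanishes. The hypothesis $S\cap C=\{p\}$ with $I_p(S,C)=6$ forces $\Phi(S)$ to be a scalar multiple of $(v_0u-u_0v)^6$.

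The final step is a quick computation: the coefficient of $uv^5$ in $(v_0u-u_0v)^6$ equals $-6\,v_0u_0^5$. For $\Phi(S)$ to lie in the image of $\Phi$, this must vanish, and since $u_0\neq 0$ we are forced into $v_0=0$. Then $\Phi(S)$ is a multiple of $v^6=\Phi(z^2)$, so by injectivity of $\Phi$ the ``conic'' $S$ is the double line $z^2=0$, contradicting irreducibility. Hence $s$ is a node. The one slightly subtle point is spotting that the monomial $uv^5$ is precisely the obstruction in the image of $\Phi$ for the cuspidal cubic, the very feature that distinguishes the cuspidal case from the nodal one; once that is observed, the remainder is bookkeeping.
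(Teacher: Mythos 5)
Your proof is correct, but it runs in the opposite direction from the paper's. The paper parametrizes the \emph{conic} $S$ by $[u:v]\mapsto[u^2:uv:v^2]$, computes $\psi^{-1}(u^6)$ inside $\CCC[x,y,z]_3$ to pin down the equation of any cubic meeting $S$ only at $p$, imposes the singularity conditions at $s$ to determine the remaining coefficients, and then verifies by a direct coordinate change that the resulting cubic has two distinct tangent directions at $s$, i.e.\ a node. You instead assume the singularity is a cusp, parametrize the \emph{cubic} by its normalization $[u:v]\mapsto[u^2v:u^3:v^3]$, and observe that the image of $\Phi$ on conics misses exactly the monomial $uv^5$, which forces the putative conic to be the double line $z^2=0$ --- a contradiction. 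Your computations check out (the image of the six basis conics, the coefficient $-6v_0u_0^5$ of $uv^5$ in $(v_0u-u_0v)^6$, and the injectivity of $\Phi$ since no conic contains an irreducible cubic). Amusingly, your technique is precisely the one the paper deploys for the companion Lemma~\ref{lemma:NonExistenceOfCuspidalCubicWithSingularCubic} (cuspidal cubic versus cubic, where $uv^8\notin\Ima\varphi$ plays the role of your $uv^5$), so your argument unifies the two lemmas under one method. What your route buys is brevity and the avoidance of the final node-versus-cusp verification; what the paper's route buys is an explicit construction showing the nodal configuration actually occurs. One minor point worth a sentence in your write-up: to reduce to ``node or cusp'' you should note that $m_s(C)=2$ (multiplicity $3$ would make $C$ three concurrent lines, contradicting irreducibility), so that Lemma~\ref{lemma:Multiplicity2ImpliesAk} applies and the genus bound of Lemma~\ref{lemma:GenusUpperBoundForIrreducibleDivisors} then gives $k\le 2$; and that all irreducible cuspidal plane cubics are projectively equivalent, which justifies your normal form.
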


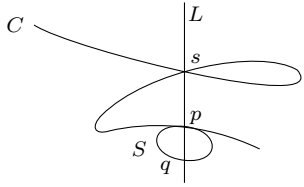
\begin{figure}[h]
  \begin{center}
    \begin{tikzpicture}[xscale=0.5,yscale=0.3,baseline=(b.base)]
    \draw (0,-1) -- (0, 7);
    \node (b) at (0,2){};

    \draw (0,0) ..controls ++(170:1) and ++(170: 1).. (0,1.5)
    .. controls ++(-10:1) and ++(-10:1) .. (0,0);
    \draw (2,0.5) ..controls (0.5,1.7) and (-1,1.7) .. (-2,1.3)
    .. controls (-2.4,1.1) and (-2.5,1.5) .. (-2.2,2)
    .. controls (-1,4) and (2,5) ..(3,4)
    .. controls (4,2) and (-3,4.9) ..(-4,6);

    \draw (-1.2,0.5) node{\scriptsize{$S$}};
    \draw (-0.5,-0.3) node{\scriptsize{$q$}};
    \draw (0.3,6.5) node{\scriptsize{$L$}};
    \draw (0.3,1.9) node{\scriptsize{$p$}};
    \draw (0.3,4.5) node{\scriptsize{$s$}};
    \draw (-4.5,6) node{\scriptsize{$C$}};
    \end{tikzpicture}%
  \end{center}
  \caption{The situation of Lemma~\ref{lemma:NonExistenceCubicConicConfiguration}}
  \label{figure:NonExistenceCubicConicConfiguration}
\end{figure}

\begin{proof}
  First of all, let us prove that there exists a point $q$, distinct from $p$, on $S\cap L$, where $L$ is the line going through $p$ and $s$.
  If $S\cap L=\{p\}$ were true, then $S$ and $L$ would be tangent at $p$, and since $C$ and $S$ are tangent at $p$, also $L$ and $C$ would be tangent.
  Hence, we would have $3=C\cdot L\geq I_p(C,L)+I_s(C,L)\geq 4$, since $s$ is a singularity of $C$.
  This is a contradiction, so there exists a point $q$, distinct from $p$, in the intersection of $L$ and $S$.

  We can fix coordinates such that $p=[0:0:1]$, $q=[1:0:0]$, and that the conic $S$ is given by the zero set of $G=xz-y^2$.
  Hence, the line $L$ is given by $y=0$.
  So we can write $s=[\lambda:0:1]$, with $\lambda\in\CCC^*$ since $s\neq p$ (as $C$ is singular at $s$ and smooth at $p$).
  With the linear change of coordinates $[x:y:z]\mapsto[x:\sqrt{\lambda}y:\lambda z]$, the conic and the points $p$ and $q$ are preserved, but $s$ is mapped onto $[1:0:1]$.
  Hence, we can assume that $\lambda=1$.
  To help with the proof, remark that we have shown that we can assume the following coordinates, and that we want to show that Figure~\ref{figure:NonExistenceCubicConicConfiguration} is accurate.
  \begin{center}
    \begin{tabular}{>{$}l<{$}  >{$}l<{$} }
      L: y=0, & q=[1:0:0],\\
      S: xz-y^2=0, & p=[0:0:1],\\
       & s=[1:0:1].\\
    \end{tabular}
  \end{center}
  Let $\psi: k[x,y,z]_3\to k[u,v]_6$ be the linear map $F\mapsto F(u^2,uv,v^2)$.
  We have \[\psi^{-1}(u^6)=\{x^3+(xz-y^2)(ax+by+cz) \sd a,b,c\in k\}.\]
  So our cubic $C$ is given by a polynomial $F$ that is of this form and we have $F(x,0,1)=x(x^2+ax+c)$, which has a double root in $x-1$ since $F$ is singular at $s$.
  So $x^2+ax+c=(x-1)^2$ and thus $a=-2$ and $c=1$.
  Our $F$ is therefore given by \[F = x^3+(xz-y^2)(-2 x+by+z).\]
  It is singular, hence all its derivatives must be zero at $s$, including in $y$-direction: \[\frac{\partial F}{\partial y}(q)=b,\] hence $b=0$.

  The coordinate change $x\mapsto x+ z$ maps $[0:0:1]$ onto $s$, we see that $[0:0:1]$ is not a cusp of $F(x+ z,y,z)$: \[F(x+ z,y,z)=z\,(x^2+ y^2)+x^3-2\, xy^2.\]
\end{proof}

\begin{lemma}\label{lemma:NonExistenceWhereMIs4}
  There is an upper bound $N(3,12)\leq 18$.
  In particular, there is no polynomial of bidegree $(3,12)$ with a singularity of type $A_{19}$ or $A_{20}$.
\end{lemma}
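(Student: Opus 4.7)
The plan mimics Lemma~\ref{lemma:NonExistenceOfPolynomialsWhereMIs3}. Assume for contradiction that a polynomial of bidegree $(3,12)$ has an $A_k$-singularity with $k \geq 19$. By Corollary~\ref{corollary:ReducibleDivisorsNotInteresting} (with $m = 4$, $r = 0$), any reducible $(3, 12)$-divisor admits at most an $A_{15}$-singularity, so the $(3,12)$-divisor $C_0 \sim 3S_+ \subset \FFF_4$ is irreducible, hence disjoint from $S_-$; Lemma~\ref{lemma:GenusUpperBoundForPolynomials} bounds $k \leq 20$. By Lemma~\ref{lemma:3-SectionSituation} the singularity $s_0$ admits a cofibered point $p_0$, and $\mathcal{C}_0 = (C_0, S_-, s_0, p_0)_4$ is a disjoint $3$-configuration of type $k \in \{19, 20\}$.

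I then apply a singular $\mathcal{C}_0$-chain $\pi$ of $n = 9$ links. The hypotheses of Lemma~\ref{lemma:SpecialInformationAfterSingularChain} are satisfied ($n - m = 5$ is odd, $n = 9 \leq \lceil k/2 \rceil = 10$, and $9m - 3n = 9 \leq 17$), giving $\pi_*(\mathcal{C}_0) = (C_9, S_9, s_9, p_9)_1$ with information $[9, 5, k-18, 9; 1]$, so $C_9 \sim 3S_- + 3f$ and $S_9 \sim S_- + 3f$ in $\FFF_1$. Contracting $S_-$ via $\rho : \FFF_1 \to \PPP^2$ to a point $q$ and using $C_9 \cdot S_- = 0$ and $S_9 \cdot S_- = 2$, I obtain an irreducible cubic $C = \rho_*(C_9)$ avoiding $q$ with an $A_{k-18}$-singularity (node if $k=19$, cusp if $k=20$) at $s = \rho(s_9)$, and an irreducible cubic $S = \rho_*(S_9)$ with a double point at $q$, such that $C \cap S = \{p\}$ with $I_p(S,C) = 9$, and the three points $p, s, q$ are collinear.

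The final step is to rule out this $\PPP^2$ configuration by exploiting the rationality of $C$. Fix coordinates so that $s = [0:0:1]$, $p = [0:1:0]$ and the line through $p, s, q$ is $x = 0$; then $q = [0:1:c]$ with $c \neq 0$. Normalise $C$ to $y^2 z = x^3$ (cuspidal case $k=20$) or $y^2 z = x^2(x+z)$ (nodal case $k=19$), with standard parametrization $\varphi : \PPP^1 \to C$ satisfying $\varphi([1:0]) = p$. The pullback $\varphi^* : \CCC[x,y,z]_3 \to \CCC[u,v]_9$ has $1$-dimensional kernel (the defining polynomial $F$ of $C$), and $I_p(S, C) = 9$ with $\varphi^{-1}(p) = [1:0]$ forces $\varphi^*(G) = \lambda v^9$, where $G$ defines $S$. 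A monomial expansion shows that in both cases this linear condition reduces $G$ to the $2$-parameter family $G = \mu F + \nu z^3$. The double-point requirement at $q$ then yields $G(q) = c(\mu + \nu c^2) = 0$ and $\partial_y G(q) = 2 \mu c = 0$; since $c \neq 0$, the second forces $\mu = 0$, and together with the first this forces $\nu = 0$, giving $G = 0$ or $S = C$, both contradicting $q \in S$ and $q \notin C$.

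The main obstacle is the monomial computation reducing $\varphi^*(G) = \lambda v^9$ to $G = \mu F + \nu z^3$, and it is slightly delicate in the nodal case. There the parametrization $[u:v] \mapsto [(u^2-v^2)v, (u^2-v^2)u, v^3]$ is $2$-to-$1$ at the node $\varphi([\pm 1 : 1])$, and every pullback of a degree-$3$ monomial other than $z^3 \mapsto v^9$ carries the factor $w = u^2 - v^2$; after extracting $w$ successively from $\varphi^*(G) - \lambda v^9$ and evaluating the remainder at $u = \pm v$, the coefficients of $G$ get pinned down one at a time and one lands in the stated family. The cuspidal case is simpler: with $\varphi([u:v]) = [u^2 v, u^3, v^3]$, the only degree-$9$ monomial missing from the image of $\varphi^*$ is $uv^8$, and its coefficient $9(-t)^8$ in $(u - tv)^9$ vanishes only when $t \in \{0, \infty\}$, corresponding respectively to the cusp and to $p = [0:1:0]$, pinning down the family immediately. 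Once the family is known, the final $2 \times 2$ system at $q$ is uniform across the two cases.
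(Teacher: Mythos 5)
Your reduction to the plane configuration (an irreducible cubic $C$ avoiding $q$ with an $A_1$- or $A_2$-point at $s$, an irreducible cubic $S$ with a double point at $q$, $C\cap S=\{p\}$ with $I_p(S,C)=9$, and $p,s,q$ collinear) is correct and is exactly the paper's. The cuspidal case $k=20$ is also handled correctly and amounts to the paper's Lemma~\ref{lemma:NonExistenceOfCuspidalCubicWithSingularCubic}: there the missing monomial $uv^8$ genuinely forces $p$ to be the unique flex $[0:1:0]$, whence $G\in\CCC F+\CCC z^3$ and the evaluation of $G$ and $\partial_yG$ at $q$ gives the contradiction.

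The nodal case $k=19$ has a genuine gap. You normalise $C$ to $y^2z=x^2(x+z)$ \emph{and} place $p$ at $[0:1:0]=\varphi([1:0])$, which is a flex of $C$; everything you do afterwards only establishes the easy implication ``$\varphi^*(G)=\lambda v^9\Rightarrow G=\mu F+\nu z^3$''. But nothing forces $\varphi^*(G)$ to be proportional to $v^9$. With your parametrisation, $\Ima\varphi^*=\{g\in\CCC[u,v]_9 : g(1,1)=g(-1,1)\}$, and $(v_0u-u_0v)^9$ lies in this space precisely when $\bigl(\tfrac{v_0-u_0}{v_0+u_0}\bigr)^9=-1$, which has \emph{nine} solutions $[u_0:v_0]$; equivalently, the condition that a cubic cut out the divisor $9p$ only says that $p$ is a $9$-torsion point of $C^{\mathrm{sm}}\cong\mathbb{G}_m$ (with a flex as origin), not that it is $3$-torsion, i.e.\ a flex. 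Since the automorphisms of $\PPP^2$ preserving a nodal cubic act on $\mathbb{G}_m$ only by $t\mapsto\zeta t^{\pm1}$ with $\zeta^3=1$, a primitive ninth root of unity cannot be moved to the identity, so your normalisation is unavailable in six of the nine cases. Those six cases are precisely what the paper's Lemma~\ref{lemma:NonExistenceNodalCubicWithCollinearSingularCubicInP2} is really about (the flex cases are dispatched there because $G$ degenerates to a perfect cube such as $(3x+3y+z)^3$): it exhibits an explicit cubic $H$ with $\varphi(H)=(u+\lambda v)^9$, determines the remaining coefficient $a$ from a discriminant computation on the line through the node and $p$, and reaches the contradiction by evaluating $\frac{\partial G}{\partial x}$ at the putative singular point using $\lambda^6+\lambda^3+1=0$. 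None of this is present in your proposal, so the exclusion of $A_{19}$ is not established.
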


\begin{proof}
  Let us assume that a polynomial of bidegree $(3,12)$ with an $A_k$-singularity and $k\geq 19$ exists.
  If the $(3,12)$-divisor $C_0$ of the polynomial is reducible, we already know that such a singularity cannot exist by Lemma~\ref{lemma:ReducibleDivisorsNotInteresting}.
  So we assume that $C_0$ is irreducible.
  By Lemma~\ref{lemma:GenusUpperBoundForIrreducibleDivisors}, we know that $k\leq 20$.
  We assume that $C_0$ has an $A_k$-singularity at a point $s_0$, where $k\in\{19,20\}$.
  So we consider the $3$-configuration $\mathcal{C}_0=(C_0,S_-,s_0,p_0)_4$, which is disjoint because $C_0\sim3S_+$ is irreducible.
  Let $\pi:\FFF_{4}\dashrightarrow\FFF_{m'}$ be a singular $\mathcal{C}_0$-chain of $n=9$ links.
  Note that $n-m=5$ is odd, that $9<10=\ceil{\frac{k}{2}}$, and that $C_0^2-3n=36-27=9<17$.
  Together with Lemma~\ref{lemma:SpecialInformationAfterSingularChain}, we find that $\mathcal{C}_n=\pi_*(\mathcal{C}_0)$ is equipped with \[[9,5,K,9;1],\] where $K=1$ if $k=19$, and $K=2$ if $k=20$.
  Knowing the self-intersection of the $3$-section $C_n$ and the section $S_n$ in $\FFF_1$, we find $C_n\sim 3S_-+3f$ and $S_n\sim S_-+3f$.
  As $C_n$ is irreducible, it does not intersect the $(-1)$-curve $S_-$. In particular, $p_n\notin S_-$ and $s_n\notin S_-$.

  Let $\rho:\FFF_{1}\to\PPP^2$ be the contraction of $S_-$ onto a point $q\in\PPP^2$.
  Let $C=\rho_*(C_n)$, $S=\rho_*(S_n)$, $p=\rho(p_n)\neq q$, and $s=\rho(s_n)\neq q$.
  We have $C^2=C_n^2=9$, so $C$ is a cubic not going through $q$.
  Since $S_n\cdot S_-=2$, we have $m_q(S)=2$ and so $S^2=S_n^2+4=9$. Hence, $S$ is a cubic with a singular point at $q$.
  They intersect only at $p$, because $I_p(S,C)=I_{p_n}(S_n,C_n)=9$.
  Since $s_n$ and $p_n$ are cofibered, the distinct points $s$, $p$, and $q$ are collinear.
  Recall that $C$ has a singular point at $s$ (of type $A_1$ or $A_2$, depending on $k$).
  To summarize, we have two singular cubics that intersect at exactly one point, and this point is collinear with the two singular points. \begin{itemize}
    \item Lemma~\ref{lemma:NonExistenceOfCuspidalCubicWithSingularCubic} contradicts this situation if one of the singularities is a cusp.
    \item If one of the singularities is a node, then Lemma~\ref{lemma:NonExistenceNodalCubicWithCollinearSingularCubicInP2} contradicts our situation.
  \end{itemize}
  The lemma is proved.
\end{proof}

\begin{lemma}\label{lemma:NonExistenceOfCuspidalCubicWithSingularCubic}
  Let $C\subset\PPP^2$ be an irreducible cubic curve with a cusp.
  Let $D\subset\PPP^2$ be another irreducible cubic that intersects $C$ in exactly one point that is not the cusp. Then, $D$ is smooth.
\end{lemma}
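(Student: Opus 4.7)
The plan is to exploit the explicit parametrization of the cuspidal cubic. After a projective change of coordinates I may assume $C$ is $y^2z=x^3$ with cusp at $[0:0:1]$, and use the normalization
\[\varphi\colon\PPP^1\to C,\quad [u:v]\mapsto[u^2v:u^3:v^3],\]
which is an isomorphism outside the cusp $\varphi([0:1])=[0:0:1]$. Let $F\in\CCC[x,y,z]_3$ be an equation of $D$. Since $D$ and $C$ are distinct irreducible cubics meeting in the single point $p$ with $I_p(D,C)=9$ by B\'ezout, the pull-back $F\circ\varphi$ is a nonzero binary form of degree $9$ with a single zero of order $9$, hence of the form $\mu(bu-av)^9$ where $[a:b]=\varphi^{-1}(p)$ and $\mu\in\CCC^*$ (if $\mu=0$ then $F$ vanishes on $C$, forcing $D=C$).

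The first key step is a combinatorial observation. Writing $F=\sum_{i+j+k=3}a_{ijk}x^iy^jz^k$ gives
\[F\circ\varphi=\sum_{i+j+k=3}a_{ijk}\,u^{2i+3j}\,v^{i+3k},\]
and running through the ten triples $(i,j,k)$ one checks that the monomial $uv^8$ never appears, simply because $2i+3j=1$ has no solutions in nonnegative integers. Since the coefficient of $uv^8$ in $(bu-av)^9$ is $9ba^8$ and since $a=0$ would put $[a:b]=[0:1]$ at the cusp (excluded by hypothesis), this forces $b=0$; equivalently, $p=\varphi([1:0])=[0:1:0]$, the unique flex of $C$.

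Having pinned down $p$, I write $F\circ\varphi=\mu v^9$ and match coefficients: every $a_{ijk}$ must vanish except $a_{003}$ together with $a_{300},a_{021}$ subject to $a_{300}+a_{021}=0$ (the last relation comes from the collision $(3,0,0),(0,2,1)\mapsto u^6v^3$). Thus $F=\alpha(x^3-y^2z)+\lambda z^3$ for some scalars $\alpha,\lambda$, and both $\alpha=0$ (then $D$ is the triple line $\{z=0\}$) and $\lambda=0$ (then $D=C$) are excluded by irreducibility of $D$ and the fact that $D\ne C$. A direct inspection of
\[F_x=3\alpha x^2,\quad F_y=-2\alpha yz,\quad F_z=-\alpha y^2+3\lambda z^2\]
then shows these have no common zero in $\PPP^2$ when $\alpha\lambda\ne0$, so $D$ is smooth.

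I expect the main obstacle to be spotting the right combinatorial observation: once one notices that $uv^8$ is the unique monomial of degree $9$ missing from the image of $F\mapsto F\circ\varphi$ and pairs it with the coefficient $9ba^8$ in $(bu-av)^9$, the point $p$ is forced to be the flex and the rest reduces to straightforward linear algebra plus a gradient check.
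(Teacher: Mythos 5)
Your proposal is correct and follows essentially the same route as the paper: parametrize the cuspidal cubic, observe that the pullback map $\CCC[x,y,z]_3\to\CCC[u,v]_9$ misses exactly the monomial $uv^8$, use this to force the intersection point to be the flex, pin down the equation of $D$ up to the two surviving parameters, and verify smoothness by a gradient computation. The only cosmetic differences are the choice of normal form ($y^2z=x^3$ versus the paper's $x^2z=y^3$) and that you keep both scalars $\alpha,\lambda$ in the final check rather than normalizing one of them away by a coordinate change.
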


\begin{proof}
  If $C$ has a cusp, with a linear transformation we can assume that the cubic curve $C$ is given by the zero set of $F=x^2z-y^3$, which has a cusp at $q=[0:0:1]$.
  It can be parametrized by $[u:v]\mapsto [u^3:u^2v:v^3]$.
  We consider the linear map \begin{align*}
    \varphi:\CCC[x,y,z]_3&\to \CCC[u,v]_9\\
    G&\mapsto G(u^3,u^2v,v^3)
  \end{align*} and notice that both source and target are of dimension 10.
  The equation of the curve being $F$, we have $\ker\varphi=\CCC\cdot F$ and so $\Ima\varphi$ has dimension $9$.
  One can check that $y^3$ and $x^2z$ are both sent to $u^6v^3$ and that $uv^8\notin \Ima(\varphi)$, hence \[\Ima(\varphi)=\{a_0u^9+\cdots+a_7u^2v^7+a_9v^9\}.\]
  As the parametrization of $F$ sends $[0:1]$ to the cusp $q=[0:0:1]$, any point on $C$ outside of $q$ is of the form $[1:-\alpha]\mapsto [1:-\alpha:-\alpha^3]$ for some $\alpha\in\CCC$.
  As the intersection point $p$ of $C$ and $D$ is not the cusp, we have $p=[1:-\alpha:-\alpha^3]$ for some $\alpha\in\CCC$.
  We want to prove that the following picture is accurate:
  \begin{center}
    \begin{tabular}{>{$}l<{$}  >{$}l<{$} }
      C: x^2z-y^3=0, & q=[0:0:1],\\
      & p=[1:-\alpha:-\alpha^3].
    \end{tabular}\begin{tikzpicture}[xscale=0.5,yscale=0.3,baseline=(b.base)]
    \draw (0,-2) -- (0, 7);
    \node (b) at (0,1){};

    \draw (1,1) .. controls (-1.8,2.3) and (-0.1,-0.5) ..(0,-1.1)
    .. controls (-0.8,-1) and (-1.2,-1.3) ..(-2,-2);

    \draw (1,1.3) ..controls (-0.9,1) and (-1,3) .. (-0.5,3.5)
    .. controls (1.8,6) and (2,7) .. (-2,6);

    \draw (-1.5,-1) node{\scriptsize{$C$}};
    \draw (0.3,-1.5) node{\scriptsize{$q$}};
    \draw (0.2,2.2) node{\scriptsize{$p$}};
    \draw (-1.1,5.7) node{\scriptsize{$D$}};
    \end{tikzpicture}
  \end{center}
  Now, let us find a polynomial $G\in \CCC[x,y,z]_3$ that intersects $F$ only at $p$.
  Inserting the parametrisation of $F$ into $G$, we get $\varphi(G)$.
  This must have only one zero at $[1:-\alpha]$.
  So we have, up to multiplication of $G$ with a scalar, that \[G(u^3,u^2v,v^3)=(\alpha u+ v)^9\in\Ima\varphi.\]
  So $(\alpha u+v)^9$ is in the image of $\varphi$ and therefore no term with $uv^8$ may appear:
  This is only possible if $\alpha=0$.
  This implies that the unique intersection point of $F$ and $G$ is $p=[1:0:0]$.
  As $G(u^3,u^2v,v^3)=v^9$, one finds $G=z+\lambda(x^2z-y^3)$ for some $\lambda\in\CCC$.
  Since $G$ is irreducible, $\lambda$ is not zero.
  Let $\mu$ be such that $\mu^6=\frac{1}{\lambda}$.
  The coordinate change $[x:y:z]\mapsto[\mu^3x:\mu^2y:z]$ preserves $C$ and the points $p$ and $q$, and it maps $G$ onto $z^3+(x^2z-y^3)$. Therefore, we can assume that $\lambda=1$.

  One concludes the proof by checking that not all partial derivatives of $G$ can be zero at the same point.
  So $G$ is not singular.
\end{proof}

\begin{lemma}\label{lemma:NonExistenceNodalCubicWithCollinearSingularCubicInP2}
  Let $C\subset\PPP^2$ be an irreducible cubic with a node.
  Let $D\subset\PPP^2$ be an irreducible cubic that intersects $C$ in only one point that is not the node.
  Then, $D$ is smooth on the line connecting the node and the intersection point.
\end{lemma}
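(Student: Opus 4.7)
The plan is to imitate the proof of Lemma~\ref{lemma:NonExistenceOfCuspidalCubicWithSingularCubic} in the nodal setting: parametrize $C$ by $\PPP^1$, translate the hypothesis $I_p(C,D)=9$ into a normal form for the defining polynomial of $D$, and then verify that its restriction to $L=\overline{pq}$ is a binary cubic with three distinct roots. Since every transverse intersection of $L$ with $D$ forces $D$ to be smooth there, this will exclude any singular point of $D$ from $L$.

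After a linear change of coordinates I would put the nodal cubic in the normal form $F=xyz-x^3-y^3$, so that the node sits at $q=[0:0:1]$ with tangent lines $x=0$ and $y=0$. The smooth locus is parametrized by
\[\pi\colon\PPP^1\to C,\qquad[u:v]\mapsto[uv^2:u^2v:u^3+v^3],\]
with $\pi^{-1}(q)=\{[0:1],[1:0]\}$. The pullback map $\psi\colon\CCC[x,y,z]_3\to\CCC[u,v]_9$, $G\mapsto G\circ\pi$, has kernel $\CCC\cdot F$, and evaluating $\psi$ on a basis of monomials one finds
\[\Ima(\psi)=\bigl\{\,P\in\CCC[u,v]_9\,:\,\text{coefficient of }u^9\text{ in }P=\text{coefficient of }v^9\text{ in }P\,\bigr\}.\]
Writing $p=\pi([1:\zeta])$, so that $p=[\zeta^2:\zeta:1+\zeta^3]$ and $L$ has equation $x-\zeta y=0$, the hypothesis $I_p(C,D)=9$ gives $\psi(G)=\mu(\zeta u-v)^9$ for some $\mu\in\CCC^*$; imposing the image condition $[u^9]=[v^9]$ immediately forces $\zeta^9=-1$.

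Next I would invert $\psi$ coefficient-by-coefficient to recover $G$ modulo $\CCC\cdot F$. Substituting $x=\zeta y$ produces a binary cubic form $P(y,z):=G(\zeta y,y,z)$ whose coefficients are Laurent polynomials in $\zeta$. Since $p$ lies on $L$ and corresponds to $[y:z]=[\zeta:1+\zeta^3]$, the linear form $\ell=(1+\zeta^3)y-\zeta z$ divides $P$; polynomial division gives $P=\ell\cdot Q$ with $Q(y,z)=30y^2+Byz+\zeta^{-1}z^2$ and an explicit $B\in\CCC(\zeta)$. Showing that $P$ has three distinct roots now reduces to the two non-vanishing statements $Q(\zeta,1+\zeta^3)\neq0$ (so $p$ is a simple root of $P$) and $\mathrm{disc}(Q)\neq0$ (so the two other roots are distinct).

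After clearing denominators and using $\zeta^9=-1$ to eliminate high exponents, both quantities become polynomials in $t:=\zeta^3$ of degree two, and the condition reduces to their non-vanishing at the three cube roots $t\in\{-1,-\omega,-\omega^2\}$ of $-1$; this is a direct substitution using $\omega^2+\omega+1=0$, and both polynomials are non-zero at each of the three values. Hence $P$ has three distinct simple roots, so $D\cdot L$ consists of three transverse intersection points, which forces $D$ to be smooth at each of them and in particular excludes any singular point of $D$ from $L$. The main technical obstacle is the bookkeeping to extract the explicit formula for $P$ (and thus for $B$); once that is done, the two non-vanishing checks are immediate.
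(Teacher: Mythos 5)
Your setup coincides with the paper's: the same normal form $F=xyz-x^3-y^3$ for the nodal cubic, the same parametrization (up to swapping $x$ and $y$), the same description of $\Ima(\psi)$ by the condition that the coefficients of $u^9$ and $v^9$ agree, and the same conclusion that the parameter of the intersection point is a ninth root of unity ($\zeta^9=-1$ in your normalization, $\lambda^9=1$ in the paper's). The gap comes immediately afterwards. The condition $\psi(G)=\mu(\zeta u-v)^9$ determines $G$ only modulo $\CCC\cdot F$, as you yourself note, but the restriction $P=G|_L$ is \emph{not} invariant under $G\mapsto G+aF$: since $L$ is not a component of $C$, one has $F|_L=\zeta y^2z-(1+\zeta^3)y^3=-y^2\bigl((1+\zeta^3)y-\zeta z\bigr)\neq0$, so replacing $G$ by $G+aF$ changes $P$ by $-ay^2\ell$ and hence changes your quadratic factor $Q$ by $-ay^2$. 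Your normal form $Q=30y^2+Byz+\zeta^{-1}z^2$ therefore describes only one representative of the coset, not the actual equation of $D$, and $\mathrm{disc}(Q)$ is a nonconstant affine function of the free parameter $a$. Consequently there is exactly one cubic in the family $H+aF$ for which $L$ meets $D$ with multiplicity $2$ at a point $s\neq p$; your claim that $P$ always has three distinct simple roots is false for that $D$, and that is precisely the case one must analyse to prove the lemma. A transversality argument cannot close it.

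The paper's proof supplies exactly the step your plan omits: it keeps $a$ as an unknown in $G=H+aF$, assumes $D$ singular on $L$ so that the quadratic factor $P_2$ of $G|_L$ must be a perfect square, solves $\mathrm{disc}(P_2)=0$ for $a$, locates the resulting double point $s=[-\lambda^2:\lambda:4(\lambda^3-1)]$, and then verifies that the gradient of $G$ does not vanish there (using $\lambda^6+\lambda^3+1=0$, after discarding the values with $\lambda^3=1$ for which $G$ degenerates). In other words, for the exceptional member of the family the line $L$ is genuinely tangent to $D$ at $s$, but $s$ is a smooth point of $D$; detecting this requires evaluating the full gradient of $G$ at $s$, not just the restriction $G|_L$. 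To repair your argument you would have to carry the parameter $a$ through the computation and add this final gradient check in the double-root case.
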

\begin{proof}
  As $C$ has a node, by applying a linear transformation we can assume that $C$ is given by the zero set of $F=xyz-x^3-y^3$, which has a node at $[0:0:1]$.
  This cubic can be parametrized by $[u:v]\mapsto[u^2v:uv^2:u^3+v^3]$.
  Consider the linear map that is given by evaluating this parametrization, \begin{align*}
    \varphi:\CCC[x,y,z]_3&\to \CCC[u,v]_9\\
    G&\mapsto G(u^2v,uv^2,u^3+v^3).
  \end{align*}
  As $F$ is the equation of $C$, we have $\ker\varphi=\CCC\cdot(xyz-x^3-y^3)$, so the kernel is of dimension $1$.
  As $k[x,y,z]_3$ has dimension $10$, the image is of dimension $9$.\\
  Now, we show that \[\Ima\varphi=\{g\in k[u,v]_9 \sd g(1,0)=g(0,1)\}\] holds.
  For the direction ``$\subset$'' write $g(u,v)=G(u^2v,uv^2,u^3+v^3)$ and so we have $g(1,0)=G(0,0,1)=g(0,1)$.
  The condition ``$g(1,0)=g(0,1)$'' is equivalent to the coefficients of $u^9$ and $v^9$ being equal, hence the vector space on the right is also of dimension $9$ and equality holds.

  Since $[1:0]$ and $[0:1]$ are both mapped onto $[0:0:1]$, any point on $C$ outside of the node is of the form $[\alpha:-1]\mapsto [-\alpha^2:\alpha:\alpha^3-1]$ for some $\alpha\in\CCC^*$.
  Since the intersection point of $C$ and $D$ is not the node, this point can be written as $q=[-\lambda^2: \lambda : \lambda^3-1]$ for some $\lambda\in\CCC^*$.
  Now, let us find a $G\in k[x,y,z]_3$ that meets $F$ only at $q$.
  Hence, $\varphi(G)$ must have only one zero at $[\lambda:-1]\in\PPP^1$.
  Up to multiplying $G$ with a scalar, we get   \[g(u,v):=G(u^2v,uv^2,u^3+v^3)=(u+\lambda v)^9\in \Ima\varphi\] and so $1=g(1,0)=g(0,1)=\lambda^9$. So $\lambda$ is a ninth root of 1.

  We look at \[ \begin{split} H=&z^3+z^2 9\lambda(x+\lambda^7y)\\
    &+z\left(9\lambda^2 x^2(-\lambda^6+4) + 3xy(28\lambda^3-1)+9\lambda y^2(4\lambda^6-1)\right)\\
    &+9\lambda xy\left(x\,(-4\lambda^6+14\lambda^3-1)+\lambda y(-\lambda^6+14\lambda^3-4)\right)\\
    &+84\lambda^3 y^3(\lambda^3-1)
 \end{split}\]
  and check that $\varphi(H)=(u+\lambda v)^9$.
  Therefore, we have \[G=H+a(x\,y\,z-x^3-y^3)\] for some $a\in \CCC$.

  Let $L$ be the line connecting $[0:0:1]$ (the node of $F$) and $q=[-\lambda^2: \lambda : \lambda^3-1]$ (the common point of $F$ and $G$), which is thus given by $x+\lambda y=0$.
  We want to see that $G$ is smooth on this line $L$.
  Let us assume that $D$ has a singular point on $L$.
  Hence $G$ has a double zero on $L$ at a point $s$.

  Now, we find the value of $a$.
  We note that \[G\mid_{x=-\lambda y}=P_1\,P_2,\] where \begin{align*}
    P_1&=y(\lambda^3-1)-\lambda\,z,\\
    P_2&= \left( -28\,{\lambda}^{6}+56\,{\lambda}^{3}+a-1 \right) {y}^{2}+ 8\,\lambda\,\left( -{\lambda}^{6}+1 \right) yz-{\lambda}^{8}{z}^{2}.
  \end{align*}
  The zero of $P_1$ corresponds to $q$, so $P_2$ has to be a square because $G$ has a double zero on $x=-\lambda y$ at some point $s$.
  The discriminant of $P_2$, multiplied by $\lambda$, is \[4(a-12\lambda^6+72\lambda^3-33)\] and so we find that \[a=12\,{\lambda}^{6}-72\,{\lambda}^{3}+33\] and then \[P_2=-\lambda^6\left(4\,y\,(\lambda^3-1)-z\,\lambda\right)^2.\]
  So its zero is at $s=[-\lambda^2:\lambda:4(\lambda^3-1)]$.

  Note that $\lambda^9-1=(\lambda^3-1)(\lambda^6+\lambda^3+1)$.
  Inserting for $\lambda$ the values with $\lambda^3=1$, we find that $G$ is reducible. (For instance, if $\lambda=1$ we find that $G=(3x+3y+z)^3$.)
  Therefore, $\lambda$ is not a third root of $1$ and so $\lambda^6+\lambda^3+1=0$.

  We insert $s$ into the differentials of $G$ and find, using $\lambda^6+\lambda^3+1=0$, that they are not zero, a contradiction to $s$ being a singular point of $D$: \begin{align*}
      \frac{dG}{dx}\left(-\lambda^2,\lambda,4(\lambda^3-1)\right)&=27\,\lambda(-2\lambda^6+\lambda^3+1)\\
      &= -3\cdot27\lambda^7\neq0.
    \end{align*}
\end{proof}

\begin{remark}\label{remark:UpperBound}
  We have shown that $N(3,b)$ has the following upper bound (UB), where the bound for $N(3,6)$ comes from the genus upper bound in Remark~\ref{remark:UpperBoundsReducibleIrreducible}:
  \begin{center}
    \begin{tabular}{ r || >{$}c<{$} | >{$}c<{$} | >{$}c<{$}| >{$}c<{$}| >{$}c<{$}| >{$}c<{$}| >{$}c<{$} | >{$}c<{$} | >{$}c<{$} |>{$}c<{$} | }
      $b$ &  3 & 4 & 5 & 6 & 7 & 8 & 9 & 10 & 11 & 12\\ \hline\hline
      UB for $N(3,b)$ &   &  &  & 8 &  &  & 13 &  & &18 \\
    \end{tabular}
  \end{center}
\end{remark}

The same method can also be applied to prove the non-existence of a polynomial of bidegree $(3,b)$ with an $A_k$-singularity, where type and singularity are one of the following: \begin{enumerate}
  \item\label{item:3-4--8} $(3,4)$ with $A_8$,
  \item\label{item:3-5--8} $(3,5)$ with $A_8$,
  \item $(3,7)$ with $A_{11}$ and $A_{12}$,
  \item $(3,8)$ with $A_{13}$ and $A_{14}$,
  \item $(3,10)$ with $A_{16}$, $A_{17}$, and $A_{18}$,
  \item $(3,11)$ with $A_{18}$, $A_{19}$, and $A_{20}$.
\end{enumerate}
For instance, \ref{item:3-5--8} implies \ref{item:3-4--8}.
However, to determine that the obtained configuration in $\PPP^2$ does not exist is tedious (and gets more tedious with increasing $b$), and it does not add any value, since in Section~\ref{section:KnotTheory} we present a knot theoretic theorem, which has as a consequence that the polynomials of the above list do not exist.

\section{Let's Tie the Knot}\label{section:KnotTheory}

In Section~\ref{subsection:KnotTheory--Detour} we present a result of knot theory and explain how it is used to obtain an upper bound for $N(3,b)$.
With this, we can finally finish the proof of Theorem~\ref{theorem:TheTheoremTable} in Section~\ref{subsection:KnotTheory--TheProof}, by marrying the lower bound from Section~\ref{section:ToBe} with the obtained upper bound.
Therefore, despite the Shakespearean section titles and the authors first name, this paper does not end in utter tragedy but with a happy end.

\subsection{Detour to knot theory}\label{subsection:KnotTheory--Detour}

A \textit{knot} is a smooth and oriented embedding of $S^1$ into $S^3$.
A \textit{link} is the disjoint union of finitely many knots.

For any positive $r\in\RRR$, let $S_r^3$ denote the sphere of dimension $3$ with radius $r$  embedded in $\CCC^2$ as $S_r^3=\{(x,y)\in\CCC^2\mid |x|^2+|y^2|=r^2\}$.

\begin{definition}
  Let $C\subset\CCC^2$ be a curve given by a polynomial $F$ with a singularity at $(0,0)$.
  Its \textit{link of singularity} is the transversal intersection $C\pitchfork S_r^3$ for some $r>0$ small enough.
  For $R>0$ large enough we say that $C\pitchfork S_R^3$ is the \textit{link at infinity} of $C$.
\end{definition}

\begin{definition}
  A \textit{torus link} $T_{p,q}$ for two integers $p,q>0$ is defined by the embedding $S^1\to S^1\times S^1,t\mapsto(t^p,t^q)$, where $S^1=\{x\in\CCC \mid \abs{x}=1\}$.
  It is called \textit{torus knot} if $p$ and $q$ are coprime.
\end{definition}

Note that this corresponds to the intersection of the zero set of $y^p-x^q$ in $\CCC^2$ with the $3$-sphere $S^3$ in $\CCC^2$.
So if there are local analytical coordinates such that a curve is locally given by $y^p-x^q=0$, then the curve has exactly the torus link $T_{p,q}$ as its link of singularity.
Hence, a singularity of type $A_k$ corresponds to a torus link $T_{2,k+1}$.
On the other hand, the link at infinity of the curve given by $y^b-x^a=0$ is $T_{a,b}$.

\begin{definition}
  A \textit{cobordism} between two links $K$ and $T$ is an oriented surface $C$ in $S^3 \times [0, 1]$ with boundary $K \times \{0\}\, \cup\, T \times \{1\}$ such that the induced orientation agrees with the orientation of $T$ and disagrees with the orientation of $K$.

  A cobordism is called \textit{algebraic} if it is given by the intersection of a smooth algebraic curve in $\CCC^2$ with the closure of $B_R^4\setminus B_r^4\subset\RRR^4$, which is isomorphic to $S^3\times [0,1]$, where $0<r<R$.
\end{definition}

In what follows we are interested in the existence of a cobordism between the link of singularity of some curve and its link at infinity.

Let $C\subset\AAA^2$ be a curve given by a polynomial $F$ of bidegree $(a,b)$ with a singularity $y^p-x^q=0$ at $(0,0)$, so its link of singularity $K$ is the torus link $T_{p,q}$.
We want to see that there exists a cobordism from $K$ to $T_{a,b}$.

We choose $r>0$ small enough such that $C\pitchfork S_r^3$ is the link of singularity $K$ of $C$.
Set $G=F+t+s(x^a+y^b)$ for some $s,t\in\CCC$ small with $|s|,|t|<<r$.
Then, $V(G)$ is smooth (for general $s,t$), and $V(G)\pitchfork S_r^3$ is isotopic to $C\pitchfork S_r^3$, that is $K$.
The polynomial $G$ was chosen in such a way that $x^a$ and $y^b$ appear with non-zero coefficients. Hence, the link of infinity of $G$ is $T_{a,b}$.
This means that $V(G)$ gives rise to an algebraic cobordism between $K$, that is $T_{p,q}$, and $T_{a,b}$.

Therefore, the existence of a polynomial of bidegree $(3,b)$ with a singularity of type $A_k$ implies the existence of an algebraic cobordism from $T_{3,b}$ to $T_{2,k+1}$.
(Whether also the converse implication holds is not known, however, Theorem~\ref{theorem:TheTheoremTable} provides evidence that it might, see Remark~\ref{remark:UpperBoundOfKnotAchieved}.)

In this way, the following theorem gives an upper bound for $N(3,b)$.

\begin{lemma}[Theorem 1 in \cite{feller_2016}]\label{lemma:KnotUpperBound}
  Let there be two positive torus knots $T_{2,k+1}$ and $T_{3,b}$ of braid index $2$ and $3$, respectively.
  There exists an algebraic cobordism between $T_{2,k+1}$ and $T_{3,b}$ if and only if $k+1 \leq\frac{5b-1}{3}$.
\end{lemma}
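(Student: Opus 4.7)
The plan is to prove the two implications separately, using an explicit algebraic construction for one direction and a concordance-theoretic obstruction for the other. I would state the lemma as an equivalence between (existence of) an algebraic cobordism and the numerical bound, and attack each direction with different machinery.

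For the \emph{if} direction, assuming $k+1\leq (5b-1)/3$, I would build the cobordism by exhibiting a one-parameter family of algebraic curves $\{C_\tau\}_{\tau\in[0,1]}\subset\CCC^2$ with $C_0$ carrying a singularity analytically equivalent to $y^2-x^{k+1}$ at the origin and $C_1$ smooth with link at infinity $T_{3,b}$. Concretely, I would start from the singular curve $y^3-x^b=0$ and progressively deform its equation, each time smoothing a simple singularity by adding a carefully chosen monomial with a small scalar coefficient. Each such smoothing contributes one algebraic handle to the cobordism surface, and the number of handles available for the transition $T_{3,b}\leadsto T_{2,k+1}$ is controlled by the Milnor numbers $\mu(T_{3,b})=2(b-1)$ and $\mu(T_{2,k+1})=k$. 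The numerical inequality will be exactly what is needed to guarantee that such a chain of smoothings can be realized within an algebraic family.

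For the \emph{only if} direction, I would use the Upsilon invariant $\Upsilon_K\colon[0,2]\to\RRR$ of Ozsv\'ath--Stipsicz--Szab\'o, which satisfies: \textbf{(a)} additivity under connected sum and concordance invariance; \textbf{(b)} an explicit piecewise-linear formula for any torus knot due to Feller--Krcatovich, with $\Upsilon_{T_{3,b}}$ having a breakpoint at $t=2/3$; and \textbf{(c)} the 4-genus slope bound $|\Upsilon_{K_1}(t)-\Upsilon_{K_2}(t)|\leq g\cdot t(2-t)$ for any genus-$g$ cobordism between knots $K_1$ and $K_2$. For an algebraic cobordism between $T_{3,b}$ and $T_{2,k+1}$, positivity of the underlying complex curve (via the Thom conjecture of Kronheimer--Mrowka) forces the cobordism genus to equal the difference of slice genera, $g=(b-1)-k/2$. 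Plugging this value into the Upsilon bound at the special point $t=2/3$ and using the computed values of $\Upsilon_{T_{3,b}}(2/3)$ and $\Upsilon_{T_{2,k+1}}(2/3)$ will yield the desired inequality $k+1\leq(5b-1)/3$.

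The hard part will be twofold. First, producing the explicit algebraic family for the existence direction requires delicate polynomial deformations (likely based on Puiseux expansions of $y^3-x^b$ truncated at a prescribed order) and verifying that each intermediate deformation has only the expected singularities; this is essentially a calculation with singularity types that becomes combinatorially heavy. Second, the obstruction direction depends on the nontrivial fact that algebraic cobordisms saturate the slice-genus bound, and on the sharpness of the Upsilon inequality at $t=2/3$ for this specific pair of torus knots; verifying sharpness requires an explicit evaluation of the piecewise-linear formulas for $\Upsilon_{T_{3,b}}$ and $\Upsilon_{T_{2,k+1}}$ at $t=2/3$, which is where the peculiar constant $5/3$ in the statement enters.
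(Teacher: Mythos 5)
You should know at the outset that the paper does not prove this statement: it is imported verbatim as Theorem~1 of \cite{feller_2016} and used as a black box (the only commentary is the remark that Feller's argument extends to the case where $T_{2,k+1}$ is a link). So the comparison is with Feller's paper rather than with anything internal. Your sketch has the right overall architecture --- an explicit algebraic construction for existence, a concordance invariant for the obstruction --- but both halves have gaps that are more than routine.

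The concrete problem is on the obstruction side. First, the cobordism bound for $\Upsilon$ is $\abs{\Upsilon_{K_1}(t)-\Upsilon_{K_2}(t)}\leq g\,t$ for $t\in[0,1]$, not $g\,t(2-t)$. Second, and more seriously, evaluating at $t=2/3$ yields nothing: since $\Upsilon_{T_{3,b}}(t)=-(b-1)t$ on $[0,2/3]$ (the breakpoint is \emph{at} $2/3$, as you yourself note) and $\Upsilon_{T_{2,k+1}}(t)=-\tfrac{k}{2}t$ on $[0,1]$, substituting $t=2/3$ and $g=(b-1)-\tfrac{k}{2}$ turns the inequality into the identity $\tfrac{2(b-1)-k}{3}\leq\tfrac{2(b-1)-k}{3}$, which recovers only the genus bound $k\leq 2(b-1)$. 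The constant $\tfrac{5}{3}$ lives to the \emph{right} of the breakpoint: for instance, evaluating the same inequality at $t=1$, where $\Upsilon_{T_{3,b}}(1)=\upsilon(T_{3,b})\approx-\tfrac{2}{3}\,g_4(T_{3,b})=-\tfrac{2(b-1)}{3}$ precisely because the function flattens after $t=2/3$, gives $k\leq(b-1)+\abs{\upsilon(T_{3,b})}$, which does reproduce $k+1\leq\tfrac{5b-1}{3}$. So the ingredient you actually need is the behaviour of $\Upsilon_{T_{3,b}}$ on $(2/3,1]$, not its value at $2/3$. On the existence side, the sketch is essentially circular: the Milnor-number/handle count you invoke only controls the cobordism genus and hence only gives $k\leq 2(b-1)$, so the entire content --- producing deformations realizing every $k$ up to $\tfrac{5b-4}{3}$ and explaining why the construction stops exactly there --- is deferred to the unspecified ``carefully chosen monomials.'' Finally, $T_{2,k+1}$ is a knot only for $k$ even; for $k$ odd it is a two-component link, which is exactly the case the paper's surrounding remark must treat separately and which your $\Upsilon$-based setup does not cover as stated.
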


In fact, the proof in \cite{feller_2016} also works if $T_{2,k+1}$ is only a \textit{link}, since the results used in the proof hold for links, too (Lemma~6 and Proposition~22 in \cite{feller_2016}).
However, it does not work for $T_{3,3m}$, since a torus \textit{knot} $T_{p,q}$ needs $p$ and $q$ to be coprime.

\begin{remark}\label{remark:UpperBoundKnotTheory}
  By inserting $b=3m-r$ with $r\in\{1,2\}$ into Lemma~\ref{lemma:KnotUpperBound}, we find that the maximal $k$ such that there is an algebraic cobordism from $T_{2,k+1}$ to $T_{3,b}$ is the maximal $k$ with $k\leq 5m-r-1-\frac{2r+1}{3}$.
  \begin{itemize}
    \item If $r=1$ then $k=5m-3$,
    \item if $r=2$ then $k\leq5m-4-\frac{2}{3}$, hence $k=5m-5$.
  \end{itemize}

  This gives the following upper bound (UB) for $N(3,b)$:
  \begin{center}
    \begin{tabular}{ r || >{$}c<{$} | >{$}c<{$} | >{$}c<{$}| >{$}c<{$}| >{$}c<{$}| >{$}c<{$}| >{$}c<{$} | >{$}c<{$} | >{$}c<{$} |>{$}c<{$} | }
      $b$ &  3 & 4 & 5 & 6 & 7 & 8 & 9 & 10 & 11 & 12\\ \hline\hline
      UB for $N(3,b)$&   & 5 & 7 &  & 10 & 12 &  & 15 & 17 & \\
    \end{tabular}
  \end{center}
\end{remark}

Returning to Section~\ref{section:Introduction}, we observe that using $N(3,b)$ and Proposition~\ref{proposition:Orevkov}, we cannot improve Orevkov's lower bound of $\alpha\geq \frac{7}{6}$:

\begin{lemma}\label{lemma:NotAsGoodAsOrevkov}
  Every $b\geq 1$ satisfies $\frac{2\left(N(3,b)+1\right)}{3b} < \frac{7}{6}$.
\end{lemma}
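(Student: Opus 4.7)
The strategy is to split on $b \pmod 3$ and apply the knot-theoretic Lemma~\ref{lemma:KnotUpperBound} whenever it applies, supplementing with the explicit upper bounds proven in Sections~\ref{section:PolynomialVsDivisor} and~\ref{section:OrNotToBe} when it does not.

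First I would dispose of the two residue classes where the knot bound is available. If $b=3m-1$ with $m\geq 1$, Lemma~\ref{lemma:KnotUpperBound} gives $N(3,b)\leq 5m-3$, and
\[
\frac{2(N(3,b)+1)}{3b}\leq \frac{10m-4}{9m-3}<\frac{7}{6}
\quad\Longleftrightarrow\quad 60m-24<63m-21\quad\Longleftrightarrow\quad 3m>-3,
\]
which holds for every $m\geq 1$. The case $b=3m-2$ is entirely analogous: $N(3,b)\leq 5m-5$ leads to $\frac{10m-8}{9m-6}<\frac{7}{6}\iff 3m>-6$, again always true.

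The residue class $b\equiv 0\pmod 3$ is more delicate because Lemma~\ref{lemma:KnotUpperBound} is stated only when $b$ is coprime to $3$. Here I would invoke the trivial monotonicity $N(3,b)\leq N(3,b+1)$, which follows because any polynomial of bidegree $(3,b)$ is also of bidegree $(3,b+1)$. Writing $b+1=3(m+1)-2$ and applying the knot bound yields $N(3,3m)\leq 5m$. Substituting,
\[
\frac{2(5m+1)}{9m}<\frac{7}{6}\quad\Longleftrightarrow\quad 12(5m+1)<63m\quad\Longleftrightarrow\quad m\geq 5,
\]
so the inequality is obtained for all $b=3m\geq 15$.

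It remains to verify the four values $b\in\{3,6,9,12\}$. For these I would quote the explicit upper bounds already established in the paper: $N(3,3)=3$ by Corollary~\ref{corollary:N33}, $N(3,6)\leq 8$ from the irreducible genus bound in Remark~\ref{remark:UpperBoundsReducibleIrreducible}, $N(3,9)\leq 13$ from Lemma~\ref{lemma:NonExistenceOfPolynomialsWhereMIs3}, and $N(3,12)\leq 18$ from Lemma~\ref{lemma:NonExistenceWhereMIs4}. Direct substitution gives the ratios $\tfrac{8}{9}$, $1$, $\tfrac{28}{27}$, $\tfrac{19}{18}$, all strictly less than $\tfrac{7}{6}=\tfrac{21}{18}$. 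The main subtlety -- and essentially the only obstacle -- is the case $b=12$: the monotonicity argument alone gives only $N(3,12)\leq N(3,13)\leq 20$, and $\tfrac{2\cdot 21}{36}=\tfrac{7}{6}$ is precisely equality. Strictness there genuinely requires the refined bound of Lemma~\ref{lemma:NonExistenceWhereMIs4}; everywhere else knot theory (or the elementary genus bound) already suffices.
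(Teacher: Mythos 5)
Your argument is essentially the paper's own proof: knot-theoretic upper bound from Lemma~\ref{lemma:KnotUpperBound} for $b\not\equiv0\pmod 3$, monotonicity $N(3,b)\leq N(3,b+1)$ plus the same bound for large multiples of $3$ (both arguments give the threshold $b\geq 15$), and the explicit upper bounds for $b\in\{3,6,9,12\}$ — the paper cites Theorem~\ref{theorem:TheTheoremTable} here, you cite its constituent ingredients, which amounts to the same thing and is if anything slightly cleaner. Your identification of $b=12$ as the one place where the refined bound of Lemma~\ref{lemma:NonExistenceWhereMIs4} is genuinely needed (monotonicity alone giving exactly $\frac{7}{6}$) is a nice observation. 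The only loose end is $b=1$ and $b=2$: your residue-class split with $m\geq 1$ silently applies Lemma~\ref{lemma:KnotUpperBound} to $T_{3,1}$ and $T_{3,2}$, which are not torus knots of braid index $3$, so the cited result does not formally cover them. The paper sidesteps this by treating $b=1,2$ separately via Example~\ref{example:N1b} and Lemma~\ref{lemma:BoundsForThe2SectionCase} (giving $N(3,1)=0$ and $N(3,2)=2$, both comfortably below the required bound); you should do the same.
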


\begin{proof}
  For $b=1,2$ the statement follows from Example~\ref{example:N1b} and Lemma~\ref{lemma:BoundsForThe2SectionCase}, so we assume $b\geq 3$.
  If $b$ is no multiple of $3$, we have with Lemma~\ref{lemma:KnotUpperBound} that \[\frac{2(N(3,b)+1)}{3b}\leq \frac{2(5b-1)}{9b}=\frac{10}{9}-\frac{2}{9b}<\frac{10}{9}<\frac{7}{6}.\]
  If $b$ is a multiple of $3$, we write $b=3m-3$ for some $m\geq 2$, and hence $b+1=3m-2$ is not a multiple of $3$, so Remark~\ref{remark:UpperBoundKnotTheory} gives \[N(3,b)\leq N(3,b+1)\leq 5m-5.\]
  We find that $\frac{2(N(3,b)+1)}{3b}\leq\frac{2(5m-4)}{3(3m-3)}$, which is strictly less than $\frac{7}{6}$ for $m\geq 6$, and that corresponds to $b\geq 15$.
  Theorem~\ref{theorem:TheTheoremTable} concludes the proof for the remaining cases $b=3,6,9,12$.
\end{proof}

\subsection{Proof of Theorem~\ref{theorem:TheTheoremTable}}\label{subsection:KnotTheory--TheProof}

\begin{proof}[Proof of Theorem~\ref{theorem:TheTheoremTable}]
  Let us look at the souvenirs collected on our journey.
  In Remark~\ref{remark:LowerBound} we have found lower bounds (LB), and the upper bounds (UB) from  Remarks~\ref{remark:UpperBound} and~\ref{remark:UpperBoundKnotTheory} combined give the following values for $N(3,b)$:
  \begin{center}
    \begin{tabular}{ r || >{$}c<{$} | >{$}c<{$} | >{$}c<{$}| >{$}c<{$}| >{$}c<{$}| >{$}c<{$}| >{$}c<{$} | >{$}c<{$} | >{$}c<{$} |>{$}c<{$} | }
      $b$ &  3 & 4 & 5 & 6 & 7 & 8 & 9 & 10 & 11 & 12\\ \hline\hline
      LB for $N(3,b)$& 3 & 5 & 7 & 8 & 10 & 12 & 13 & 15 & 17 & 18\\\hline
      UB for $N(3,b)$& 3  & 5 & 7 & 8 & 10 & 12 & 13 & 15 & 17 & 18\\\hline\hline
      $N(3,b)$& 3 & 5 & 7 & 8 & 10 & 12 & 13 & 15 & 17 & 18\\
    \end{tabular}
  \end{center}
\end{proof}

\begin{remark}\label{remark:UpperBoundOfKnotAchieved}
  Theorem~\ref{theorem:TheTheoremTable} shows that in the cases studied, the upper bound obtained from Lemma~\ref{lemma:KnotUpperBound} is always achieved.
\end{remark}

\end{document}